\documentclass[reqno,11pt]{amsart}

\usepackage{xcolor}
\usepackage{graphicx}
\usepackage[hidelinks]{hyperref}
\usepackage{amscd}

\usepackage{amsmath}
\usepackage{amsthm}
\usepackage{amssymb}
\usepackage{latexsym,array}
\usepackage{amsfonts}
\usepackage{shadow}
\usepackage{amsbsy}
\usepackage{amssymb}
\usepackage{dsfont}
\usepackage{mathtools}
\usepackage{enumitem}
\usepackage{doi}

\usepackage[notref, notcite, final]{showkeys}
\usepackage{color}
\usepackage{graphicx}
\usepackage[hidelinks]{hyperref}
\usepackage{cleveref}
\usepackage{fullpage}
\usepackage{comment}
\usepackage{tikz-cd}
\usepackage{tikz}
\usepackage{float}

\usepackage{dsfont}

\usepackage{cleveref}

\numberwithin{equation}{section}

\newcommand{\id}{\mathcal{I}}

\newtheorem{theorem}{Theorem}[section]
\newtheorem{lemma}[theorem]{Lemma}

\newtheorem{proposition}[theorem]{Proposition}

\theoremstyle{definition}
\newtheorem{remark}[theorem]{{\bf Remark}}
\newtheorem{definition}[theorem]{Definition}

\newcommand{\cc}{\mathbb{C}}

\newcommand{\hh}{\mathbb{H}}

\newcommand{\Q}{\mathcal{Q}}


\crefname{enumi}{}{}
\crefname{enumii}{}{}

\title[]{
Harmonic and polyanalytic functional calculi on the $S$-spectrum for unbounded operators}

\date{}

\author[Fabrizio Colombo]{Fabrizio Colombo}
\address{(FC) Politecnico di Milano, Dipartimento di Matematica, Via E. Bonardi, 9, 20133 Milano, Italy}
\email{fabrizio.colombo@polimi.it}

\author[A. De Martino]{Antonino De Martino}
\address{(ADM)
 Schmid College of Science and Technology,  Chapman University One University Drive Orange\\
California 92866,
USA
}\email{ademartino@chapman.edu}

\author[S. Pinton]{Stefano Pinton}
\address{(SP)
Politecnico di Milano\\Dipartimento di Matematica\\Via E. Bonardi, 9\\20133
Milano, Italy
} \email{stefano.pinton@polimi.it}

\begin{document}
	
	\maketitle
	
\begin{abstract}
Harmonic and polyanalytic functional calculi have been recently
defined for bounded commuting operators.
Their definitions are based on the Cauchy formula of slice hyperholomorphic functions and on
the factorization of the Laplace operator
in terms of the Cauchy-Fueter operator
 $\mathcal{D}$ and of its conjugate $\overline{\mathcal{D}}$.
Thanks to the Fueter extension theorem when we apply the operator $\mathcal{D}$ to
slice hyperholomorphic functions we obtain harmonic functions
and via the Cauchy formula of slice hyperholomorphic functions we establish
an integral representation for harmonic functions.
This integral formula is used to define the harmonic functional calculus on the $S$-spectrum.
Another possibility is to apply the conjugate of the Cauchy-Fueter operator to slice hyperholomorphic functions.
In this case, with a similar procedure we obtain the class of polyanalytic functions, their integral representation and the associated polyanalytic functional calculus.
The aim of this paper is to extend the harmonic and the polyanalytic functional calculi
to the case of unbounded operators and to prove some of the most important properties.
These two functional calculi belong to so called fine structures on the $S$-spectrum in the quaternionic setting.
Fine structures on the $S$-spectrum associated with Clifford algebras
constitute a new research area that deeply connects different research fields such as operator theory, harmonic analysis and hypercomplex analysis.
\end{abstract}

\medskip
\noindent AMS Classification: 47A10, 47A60

\noindent Keywords: $S$-spectrum, harmonic functional calculus, polyanalytic functional calculus, fine structures.

\tableofcontents

\section{Introduction}
One of the main motivations to investigate quaternionic spectral theory
 can be found in the paper of G. Birkhoff and J. Von Neumann, see \cite{BF},
 where the authors show that quantum mechanics can also be formulated using quaternions,
  but they do not specify the definition of spectrum for quaternionic linear operator.
The spectral theory on the $S$-spectrum for quaternionic linear operators began
 in 2006  with the discovery of the $S$-spectrum.
 This notion of spectrum  was identified
 using only methods in hypercomplex analysis even though its existence was suggested
 by quaternionic quantum mechanics, for more details see the introduction of the book \cite{CGK}.
  The notion of $S$-spectrum extends also to operators in the
 Clifford algebra setting, see \cite{ColomboSabadiniStruppa2011,ColSab2006},
 but recently it has been shown that the quaternionic and the Clifford settings
 are just particular cases of a more general framework in which the spectral theory
 on the $S$-spectrum can be developed, see \cite{ADVCGKS,PAMSCKPS}.
  Using the notion of $S$-spectrum
   it was possible to prove the spectral theorem for quaternionic linear operators, see \cite{ACK},
   that is a central theorem for the formulation of quantum mechanics,
    and more recently the spectral theorem was extended to Clifford operators,
   see \cite{ColKim}.

The spectral theory based on the $S$-spectrum is systematically organised in the books
\cite{FJBOOK, CGK, ColomboSabadiniStruppa2011}. Nowadays, this theory has several research directions, without clamming completeness we mention: the slice hyperholomorphic Schur analysis, see \cite{ACS2016}, new classes of fractional diffusion problems based on fractional powers of quaternionic linear operators, see \cite{CGK, ColomboDenizPinton2020, ColomboDenizPinton2021, ColomboPelosoPinton2019}. The results on the fractional powers are based on the $H^{\infty}$-functional calculus, see \cite{ACQS2016, CGdiffusion2018}. Other interesting research directions are the study of the characteristic operator function, see \cite{AlpayColSab2020}, and the quaternionic perturbation theory and invariant subspaces, see \cite{CereColKaSab}.

\medskip
In recent times it has been developed a new branch of the spectral theory on the $S$-spectrum that is called fine structures on the $S$-spectrum.
It consists of the function spaces arising form the Fueter-Sce theorem with theirs integral representations that are used to define various functional calculi, see \cite{CDPS1,Fivedim,Polyf1,Polyf2}.

The function spaces of a given fine structure
are determined by the factorizations of the second operators $T_{FS2}$ in the Fueter-Sce extension theorem and on the Cauchy formula of slice hyperholomorphic functions.
The integral representations of the spaces of the fine structures
are crucial to define the associated functional calculi
 for bounded operators on the $S$-spectrum. Among these functional calculi we have the harmonic, the poly-harmonic, the poly-analytic ones and several others. In this paper we concentrate on the quaternionic setting
that contains two important fine structures.
In fact, the second mapping  $T_{FS2}$ in the Fueter extension theorem  is equal to the   Laplace operator $\Delta$ in dimension $4$ and the most important factorizations of $\Delta$ lead to the
  harmonic fine structure and to the polyanalytic one.

\medskip
{\em The goal of this paper is to
 extend the harmonic and the polyanalytic functional calculi to unbounded operators and to prove some of their properties.}

\medskip
In order to present our results we need to explain more about the setting in which we work,
so we will quote some results that we will need in the sequel and
whose proofs can be found in \cite{CGK}. We start by fixing the notation of the quaternions, that are defined as
$$ \mathbb{H}:= \{q=q_0+q_1e_1+q_2e_2+q_3e_3 \,| \, q_0, q_1, q_2, q_3 \in \mathbb{R}\},$$
where the imaginary units satisfy the relations
$$ e_1^2=e_2^2=e_3^2=-1, \quad \hbox{and} \quad e_1e_2=-e_2e_1=e_3, \, e_2e_3=-e_3e_2=e_1, \, e_3e_1=-e_1e_3=e_2.$$
We denote by $ \hbox{Re}(q)=q_0$ the real part of a quaternion and by $ \underline{q}:= q_1e_1+q_2e_2+q_3e_3$ its imaginary part. The conjugate of a quaternion $q \in \mathbb{H}$ is defined as $ \bar{q}=q_0- \underline{q}$ and the modulus of $q  \in \mathbb{H}$ is given by $|q|= \sqrt{q \bar{q}}= \sqrt{q_0^2+q_1^2+q_2^2+q_3^2}.$ The unit sphere of purely imaginary quaternions is defined as
$$ \mathbb{S}:= \{\underline{q}=q_1e_1+q_2e_2+q_3e_3\, | \, q_1^2+q_2^2+q_3^2=1\}.$$
We observe that if $ J \in \mathbb{S}$ then $J^2=-1$. This means that $J$ behaves like an imaginary unit, and we denote by
$$ \mathbb{C}_J:= \{u+Jv \, | \, u,v \in \mathbb{R}\}$$
an isomorphic copy of the complex numbers. If we consider a non real quaternion $q=q_0+ \underline{q}=q_0+ J_q |\underline{q}|$, where $J_q:= \underline{q}/ |\underline{q}|$. We can associate to $q$ the 2-sphere defined by
$$ [q]:= \{q_0+J |\underline{q}| \, | \, J \in \mathbb{S}\}.$$
We say that $U \subset \mathbb{H}$ is axially symmetric if, for every $u+Iv \in U$, all the elements $u+Jv$ for $J \in \mathbb{S}$ are contained in $U$. Furthermore, the set $U$ is called slice domain if $U \cap \mathbb{R} \neq \emptyset$ and if $U \cap \mathbb{C}_J$ is a domain in $ \mathbb{C}_J$ for every $J \in \mathbb{S}$.
\newline
\newline
Axially symmetric sets are suitable domains of the following class of functions.
\begin{definition}[Slice hyperholomorphic functions]
Let $U \subset \mathbb{H}$ be an axially symmetric open set and let
$$ \mathcal{U}= \{(u,v) \in \mathbb{R}^2 \, | \, u+ \mathbb{S}v \in U\}.$$
We say that a function $f:U \to \mathbb{H}$ of the form
$$ f(q)=f(u+Jv)= \alpha(u,v)+J \beta(u,v) \qquad \left({\rm resp.} \ f(q)=f(u+Jv)=\alpha(u,v)+ \beta(u,v)J\right)$$
is left (resp. right) slice hyperholomorphic if $ \alpha$ and $ \beta$ are quaternionic-valued functions such that
\begin{equation}
\label{co1}
\alpha(u,v)=\alpha(u,-v), \qquad \beta(u,v)=- \beta(u,-v) \quad \hbox{for all} \quad (u,v) \in \mathcal{U}.
\end{equation}
and if the functions $ \alpha$ and $ \beta$ satisfy the Cauchy-Riemann system
$$ \partial_u \alpha(u,v)- \partial_v \beta(u,v)=0, \qquad \hbox{and} \qquad \partial_v \alpha(u,v)+\partial_u \beta(u,v)=0.$$
\end{definition}
The class of left (resp. right) slice hyperholomorphic functions on $U$ is denoted by $ \mathcal{SH}_L(U)$ (resp. $\mathcal{SH}_R(U)$). If the functions $ \alpha$ and $ \beta$ are real-valued, then we are dealing with the subset of intrinsic slice hyperholomorphic functions. This set of functions is denoted by $ \mathcal{N}(U)$.
\\ By means of the left (resp. right) slice hyperholomorphic Cauchy kernel defined by
$$ S_{L}^{-1}(s.q)=(s- \bar{q})(s^2-2q_0s+|q|^2)^{-1}, \qquad \left(\hbox{resp.} \quad S^{-1}_R(s,q)=(s^2-2q_0s+|q|^2)^{-1}(s- \bar{q})\right),$$
for $s\not\in [q]$,
it is possible to give slice hyperholomorphic functions an integral representation. First we recall the definition of slice Cauchy domains that will be often used in the integral representations of functions.
\begin{definition}[Slice Cauchy domain]
An axially symmetric open set $U\subset \hh$ is called a slice Cauchy domain, if $U\cap\cc_J$ is a Cauchy domain in $\cc_J$ for any $J\in\mathbb{S}$. More precisely, $U$ is a slice Cauchy domain if for any $J\in\mathbb{S}$ the boundary ${\partial( U\cap\cc_J)}$ of $U\cap\cc_J$ is the union a finite number of non-intersecting piecewise continuously differentiable Jordan curves in $\cc_{J}$.
\end{definition}
\begin{theorem}[The Cauchy formulas of slice hyperholomorphic functions]
\label{Cauchy}
Let $U \subset \mathbb{H}$ be a bounded slice Cauchy domain, let $J \in \mathbb{S}$ and set $ds_J=ds(-J)$. If a function $f$ is left (reps. right) slice hyperholomorphic on a set that contains $ \bar{U}$, then for any $q \in U$ we have
$$ f(q)= \frac{1}{2 \pi} \int_{\partial(U \cap \mathbb{C}_J)} S^{-1}_L(s,q) ds_Jf(s), \qquad \left( {\rm resp.} \ f(q)= \frac{1}{2 \pi} \int_{\partial(U \cap \mathbb{C}_J)} f(s)ds_JS^{-1}_R(s,q)  \right).$$
\end{theorem}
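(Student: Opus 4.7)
The plan is to reduce to the classical Cauchy formula on a single slice $\mathbb{C}_J$ and then extend the resulting identity to all of $U$ by the identity principle for slice hyperholomorphic functions. I work the left case; the right case is symmetric. The starting observation is a kernel reduction: for $q \in U \cap \mathbb{C}_J$ and $s \in \mathbb{C}_J \setminus [q]$, commutativity of $\mathbb{C}_J$ gives the factorization $s^2 - 2q_0 s + |q|^2 = (s-q)(s-\bar q)$, and hence
\begin{equation*}
S_L^{-1}(s,q) = (s-\bar q)\bigl[(s-q)(s-\bar q)\bigr]^{-1} = (s-q)^{-1},
\end{equation*}
so on the slice $\mathbb{C}_J$ the slice hyperholomorphic Cauchy kernel collapses to the classical one.

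Next, for $q \in U \cap \mathbb{C}_J$ I would apply the splitting lemma. Picking $K \in \mathbb{S}$ with $KJ = -JK$, the Cauchy--Riemann system built into the definition of slice hyperholomorphicity forces a decomposition $f|_{U \cap \mathbb{C}_J} = f_1 + f_2 K$ with $f_1, f_2$ classically $\mathbb{C}_J$-valued holomorphic on the planar Cauchy domain $U \cap \mathbb{C}_J$. Applying the scalar Cauchy formula to $f_1$ and to $f_2$, using $J^{-1} = -J$ and the commutativity of $\mathbb{C}_J$, and recombining produces
\begin{equation*}
f(q) = \frac{1}{2\pi J}\int_{\partial(U \cap \mathbb{C}_J)} (s-q)^{-1}\, ds\, f(s) = \frac{1}{2\pi}\int_{\partial(U \cap \mathbb{C}_J)} S_L^{-1}(s,q)\, ds_J\, f(s),
\end{equation*}
after identifying $ds_J = ds(-J)$ and invoking the kernel reduction. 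This establishes the desired identity for every $q$ on the chosen slice.

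To lift the identity to all of $U$, I would use that $q \mapsto S_L^{-1}(s,q)$ is left slice hyperholomorphic on $\mathbb{H} \setminus [s]$, so that the right-hand side of the formula is left slice hyperholomorphic in $q \in U$ (the integration contour stays a positive distance from $U$, so differentiation under the integral sign is legitimate), while the left-hand side is so by hypothesis. The two sides agree on the open subset $U \cap \mathbb{C}_J$ of the axially symmetric slice domain $U$, so the identity principle for slice hyperholomorphic functions forces them to agree on all of $U$; the same reasoning yields independence of the right-hand side from the choice of $J \in \mathbb{S}$. The chief obstacle I anticipate is the non-commutative bookkeeping in the splitting step: one must check that at each manipulation all the $\mathbb{C}_J$-valued factors truly commute and that the imaginary unit $K$ appears only on the right so that it can be cleanly factored back through the integral. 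Once this care is taken, the remainder of the argument is routine.
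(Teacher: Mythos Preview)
The paper does not prove this theorem; it is stated in the Introduction as a standard preliminary result drawn from the literature (see the books \cite{CGK,ColomboSabadiniStruppa2011}). So there is no in-paper proof to compare against.

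Your argument is the standard one and is essentially correct: reduce the kernel on the chosen slice, invoke the splitting lemma to apply the classical Cauchy formula componentwise, and then extend off the slice. One point deserves tightening. In the final step you appeal to ``the identity principle for slice hyperholomorphic functions'' on ``the axially symmetric slice domain $U$'', but a slice Cauchy domain need not be a slice domain (it need not meet $\mathbb{R}$, and need not be connected), so the identity principle in its usual formulation is not available. What actually carries the extension is the representation (structure) formula: with the definition used in this paper, knowing $f$ on $U\cap\mathbb{C}_J$ recovers $\alpha$ and $\beta$ via $\alpha(u,v)=\tfrac12\bigl(f(u+Jv)+f(u-Jv)\bigr)$ and $J\beta(u,v)=\tfrac12\bigl(f(u+Jv)-f(u-Jv)\bigr)$, and hence determines $f$ on all of $U$. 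Since both sides of your identity are left slice hyperholomorphic in $q$ (for the integral this uses that $q\mapsto S_L^{-1}(s,q)$ is left slice hyperholomorphic and that the $ds_J f(s)$ factor sits on the right), and they agree on $U\cap\mathbb{C}_J$, the representation formula forces agreement on $U$. With this adjustment your proof is complete.
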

Similar formulas holds for unbounded domains.
Another class of hyperholomorphic functions   that we will use are the so-called axially monogenic functions, see \cite{DSS}.
\begin{definition}[Axially monogenic functions]
Let $U \subset \mathbb{R}^4$ be an axially symmetric slice domain. A function $f: U \to \mathbb{H}$ of class $\mathcal{C}^1$ is said to be (left) axially monogenic if it is monogenic, i.e.
$$
 \mathcal{D}f(q)= \partial_{q_0}f(q)+ \sum_{i=1}^3 e_i \partial_{q_i} f(q)=0,\ \ \ \ q\in U
$$
and it has the form
\begin{equation}
\label{axial}
f(q)= A(q_0, |\underline{q}|)+ \underline{\omega} B(q_0, |\underline{q}|), \qquad \underline{\omega}:= \frac{\underline{q}}{|\underline{q}|},
\end{equation}
where the functions $A$ and $B$ satisfy the even-odd conditions \eqref{co1}. We denote by $ \mathcal{AM}(U)$ the set of (left) axially monogenic functions.
A similar definition can be given for right axially monogenic functions and we use the notation
$$
 f(q)\mathcal{D}= \partial_{q_0}f(q)+ \sum_{i=1}^3  \partial_{q_i} f(q)e_i=0,\ \ \ \ q\in U.
$$
\end{definition}

The classes of slice hyperholomorphic and axially monogenic functions are connected by the well-known Fueter mapping theorem, see \cite{Fueter}.

\begin{theorem}[Fueter mapping theorem]
\label{FT}
Let $f_0(z)= \alpha(u,v)+i \beta(u,v)$ be a holomorphic function defined in a domain (open and connected) $D$ in the upper-half complex plane and let
$$ \Omega_D=\{q=q_0+ \underline{q} \, | \, (q_0, |\underline{q}|) \in D\}$$
be the open set induced by $D$ in $ \mathbb{H}$. Then the slice operator defined by
$$ f(q)= T_{F1}(f_0):= \alpha(q_0, |\underline{q}|)+ \frac{\underline{q}}{|\underline{q}|} \beta(q_0, |\underline{q}|)$$
maps the set of holomorphic functions in the set of intrinsic slice hyperholomorphic functions.
\\ Furthermore, the function
$$
\breve{f}(q):= T_{F2} \left(\alpha(q_0, |\underline{q}|)+ \frac{\underline{q}}{|\underline{q}|} \beta(q_0, |\underline{q}|)\right),
$$
where $ T_{F2}:= \Delta$ is the Laplace operator in four real variables, is an axially monogenic function.
\end{theorem}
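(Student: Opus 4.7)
The theorem combines two claims: that $f = T_{F1}(f_0)$ is intrinsic slice hyperholomorphic on $\Omega_D$, and that $\breve f = \Delta f$ is axially monogenic on $\Omega_D$. I would prove them in that order.

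For the first claim, I would extend $\alpha(u,v)$ and $\beta(u,v)$ from $D$ to the symmetrized parameter set $\mathcal{U}$ by the reflection rules $\alpha(u,-v) = \alpha(u,v)$ and $\beta(u,-v) = -\beta(u,v)$, thereby enforcing the even-odd conditions \eqref{co1}. The Cauchy--Riemann equations for $f_0$ survive this extension because both $\partial_u\alpha = \partial_v\beta$ and $\partial_v\alpha = -\partial_u\beta$ flip signs consistently under $v \mapsto -v$. Pick $J \in \mathbb S$ and write $q \in \Omega_D \cap \mathbb{C}_J$ as $q = u + Jv$ for some $v \in \mathbb R$: if $v \ge 0$ then $\underline{\omega} = J$ and $|\underline q| = v$, so $f(q) = \alpha(u,v) + J\beta(u,v)$; if $v < 0$ then $\underline{\omega} = -J$ and $|\underline q| = -v$, and the even-odd extension yields the same expression. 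Thus $f$ has the defining form of a slice hyperholomorphic function with real-valued $\alpha, \beta$, placing it in $\mathcal N(\Omega_D)$.

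For the second claim I would work in the coordinates $(q_0, r, \underline{\omega})$ with $r = |\underline q|$. The chain rule gives
\begin{equation*}
\Delta\bigl(\alpha(q_0,r)\bigr) = \partial_{q_0}^2 \alpha + \partial_r^2 \alpha + \frac{2}{r}\partial_r \alpha,
\end{equation*}
and, writing $\underline{\omega}\beta(q_0,r) = \sum_{i=1}^3 e_i q_i \beta(q_0,r)/r$ and summing $\partial_{q_j}^2(q_i\beta/r)$ over $j$ (using $\sum_j q_j^2 = r^2$), one obtains
\begin{equation*}
\Delta\bigl(\underline{\omega}\,\beta(q_0,r)\bigr) = \underline{\omega}\Bigl(\partial_{q_0}^2 \beta + \partial_r^2 \beta + \frac{2}{r}\partial_r \beta - \frac{2}{r^2}\beta\Bigr).
\end{equation*}
Hence $\breve f = \widetilde A(q_0,r) + \underline{\omega}\,\widetilde B(q_0,r)$ with explicit $\widetilde A, \widetilde B$ inheriting the even-odd conditions from $\alpha, \beta$, so $\breve f$ has the axial form \eqref{axial}. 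To verify $\mathcal D \breve f = 0$, I would first derive the general identity
\begin{equation*}
\mathcal D\bigl(A + \underline{\omega} B\bigr) = \Bigl(\partial_{q_0}A - \partial_r B - \frac{2}{r}B\Bigr) + \underline{\omega}\bigl(\partial_{q_0}B + \partial_r A\bigr)
\end{equation*}
valid for any scalar axially symmetric pair $(A,B)$; it follows from $\underline q^2 = -r^2$ and $\partial_{q_i}\underline{\omega} = e_i/r - q_i \underline q/r^3$. Substituting $(\widetilde A, \widetilde B)$ and invoking the Cauchy--Riemann system $\partial_{q_0}\alpha = \partial_r \beta$, $\partial_r \alpha = -\partial_{q_0}\beta$, together with its second-order consequence $\partial_{q_0}^2 \alpha + \partial_r^2 \alpha = 0 = \partial_{q_0}^2 \beta + \partial_r^2 \beta$, both the scalar and the vector components collapse to zero.

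The main obstacle is the bookkeeping: terms of order $1/r$ and $1/r^2$ proliferate in the computation of $\Delta(\underline{\omega}\beta)$ and again in $\mathcal D(\widetilde A + \underline{\omega}\widetilde B)$, and must annihilate exactly after the Cauchy--Riemann substitutions. The even-odd conditions \eqref{co1} are essential not only for $f$ to be well-defined on all of $\Omega_D$ but also to ensure smoothness of $\widetilde A, \widetilde B$ across $r = 0$. A seemingly cleaner route via the factorization $\Delta = \mathcal D\,\overline{\mathcal D}$ would recast the claim as $\mathcal D \breve f = \mathcal D^2 \overline{\mathcal D} f$, but since $\mathcal D^2 \neq \Delta$ this merely reshuffles the same computation, and the direct polar-coordinate check is the most transparent route.
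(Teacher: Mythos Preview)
The paper does not prove this theorem: it is quoted as the classical Fueter mapping theorem with a reference to \cite{Fueter}, so there is no in-paper argument to compare against. Your proposal is the standard direct verification and is correct in outline and in its key computations. The polar Laplacian formulas you state are right, and the axial Cauchy--Fueter identity $\mathcal D(A+\underline{\omega}B)=(\partial_{q_0}A-\partial_rB-\tfrac{2}{r}B)+\underline{\omega}(\partial_{q_0}B+\partial_rA)$ is exactly the Vekua system \eqref{VESo} at degree $k=0$, $m=3$; substituting your $\widetilde A=\tfrac{2}{r}\partial_r\alpha$ and $\widetilde B=\tfrac{2}{r}\partial_r\beta-\tfrac{2}{r^2}\beta$ (after cancelling the harmonic parts via Cauchy--Riemann) makes both components vanish, as you claim. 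One small caveat: the remark about smoothness of $\widetilde A,\widetilde B$ across $r=0$ needs the observation that $\beta(q_0,0)=0$ (from oddness) so that the $1/r^2$ term is tame; you allude to this but it is worth making explicit.
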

\begin{remark}
In the Fueter mapping theorem
 the restriction that the holomorphic function has to be defined in the upper-half complex plane can be removed when we impose the even-odd conditions (\ref{co1}).
\end{remark}
\begin{remark}
We can summarize the Fueter construction by the following diagram
\begin{equation}
\label{diagintro}
\begin{CD}
	\textcolor{black}{\mathcal{O}(D)}  @>T_{F1}>> \textcolor{black}{\mathcal{SH}(\Omega_D)}  @>\ \   T_{F2}=\Delta >>\textcolor{black}{\mathcal{AM}(\Omega_D)},
\end{CD}
\end{equation}
where $ \mathcal{O}(D)$ is the set of holomorphic functions defined on $D$.
\end{remark}
\begin{remark}
In the Clifford algebra setting the operator $T_{F2}$ is equal to $ \Delta_{n+1}^{\frac{n-1}{2}}$, where $\Delta_{n+1}$ is the Laplace operator in $n+1$ variables. If $n$ is odd, then the $T_{F2}$ is a pointwise differential operator, see \cite{ColSabStrupSce, Sce}.
If $n$ is even we are dealing with the fractional powers of the Laplace operator, see \cite{TaoQian1}.
\end{remark}

The Cauchy formula of slice hyperholomorphic function allows to prove the Fueter theorem in integral form, see \cite{CSS}.
\begin{theorem}[Fueter theorem in integral form]
\label{Fintform}
Let $U \subset \mathbb{H}$ be a slice Cauchy domain, let $J \in \mathbb{S}$ and set $ds_J=ds(-J)$.
If $f$ is a left (reps. right) slice hyperholomorphic function on a set $W$, such that $ \bar{U} \subset W$, then the axially monogenic function $ \breve{f}(q)= \Delta f(q)$ admits the integral representation
$$ \breve{f}(q)= \frac{1}{2 \pi} \int_{\partial(U \cap \mathbb{C}_J)} F_L(s,q) ds_J f(s),
\qquad \left({\rm resp.} \ \breve{f}(q)= \frac{1}{2 \pi} \int_{\partial(U \cap \mathbb{C}_J)}  f(s)ds_J F_R(s,q) \right).$$
\end{theorem}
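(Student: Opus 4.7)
The plan is to obtain the integral representation by applying the Laplacian directly to the slice hyperholomorphic Cauchy formula stated in Theorem \ref{Cauchy}. Since $f$ is left slice hyperholomorphic on a neighborhood $W$ of $\bar U$, Theorem \ref{Cauchy} gives, for every $q\in U$,
\begin{equation*}
f(q)=\frac{1}{2\pi}\int_{\partial(U\cap\mathbb{C}_J)} S_L^{-1}(s,q)\, ds_J\, f(s).
\end{equation*}
By the Fueter mapping theorem (Theorem \ref{FT}) the function $\breve f(q)=\Delta f(q)$ is axially monogenic, so the left-hand side is well defined; the task is to move the operator $\Delta=T_{F2}$ (acting in the variable $q$) inside the integral.

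The next step is to justify differentiation under the integral sign. For $q\in U$ fixed the integration contour $\partial(U\cap\mathbb{C}_J)$ is compact and disjoint from $[q]$, so the Cauchy kernel $S_L^{-1}(s,q)=(s-\bar q)(s^2-2q_0 s+|q|^2)^{-1}$ and all its partial derivatives in $q_0,q_1,q_2,q_3$ of order $\le 2$ are continuous in $(s,q)$ on a neighborhood of $\partial(U\cap\mathbb{C}_J)\times\{q\}$. By a standard compactness and uniform-bound argument one can interchange $\Delta$ with the integral, obtaining
\begin{equation*}
\breve f(q)=\frac{1}{2\pi}\int_{\partial(U\cap\mathbb{C}_J)} \Delta_q S_L^{-1}(s,q)\, ds_J\, f(s).
\end{equation*}
Setting $F_L(s,q):=\Delta_q S_L^{-1}(s,q)$ yields the claimed representation; the analogous computation on the right, using right multiplication of $ds_J$ and the right Cauchy kernel, produces $F_R(s,q)=\Delta_q S_R^{-1}(s,q)$ and the second formula.

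Two additional points need brief comment. First, independence of the contour from the choice of the imaginary unit $J\in\mathbb{S}$: once one knows $\breve f$ is axially monogenic (hence uniquely determined by $f$), the standard Cauchy-type argument based on the splitting along $\mathbb{C}_J$ shows the integral is independent of $J$, so $F_L(s,q)$ inherits the role of a genuine slice-type kernel. Second, one should check that $F_L(s,q)$ is well defined and non-trivial away from $[q]$; a direct computation (which I would not carry out in the sketch) gives an explicit rational expression in $q_0,\underline q,s$ of the form $-4(s-\bar q)(s^2-2q_0 s+|q|^2)^{-2}$, confirming that $F_L$ is smooth off $[q]$ so that the representation is meaningful.

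The main obstacle is technical rather than conceptual: producing a uniform bound that justifies passing the second-order differential operator $\Delta$ inside the integral. Once that is set up on compact pieces of the contour and combined with the Fueter mapping theorem to identify the result with $\breve f(q)$, the theorem follows, and the right-hyperholomorphic case is treated verbatim by mirroring the order of multiplication.
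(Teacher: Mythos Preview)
Your proposal is correct and follows exactly the approach the paper indicates: the paper does not give its own proof but cites \cite{CSS} and states that ``the Cauchy formula of slice hyperholomorphic functions allows to prove the Fueter theorem in integral form,'' then records $F_L(s,q):=\Delta S_L^{-1}(s,q)=-4(s-\bar q)\mathcal{Q}_{c,s}(q)^{-2}$, which is precisely what your argument produces by differentiating under the integral sign. Your remark on independence from $J$ is superfluous, since it is already built into the Cauchy formula (Theorem~\ref{Cauchy}) before applying $\Delta$.
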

The kernels of the previous integral transform are given by
$$ F_L(s,q):= \Delta S^{-1}_L(s,q)=-4(s- \bar{q})\mathcal{Q}_{c,s}(q)^{-2}, \ \
({\rm resp.} \ F_R(s,q):=\Delta S^{-1}_R(s,q)=-4\mathcal{Q}_{c,s}(q)^{-2}(s- \bar{x})), $$
where $\mathcal{Q}_{c,s}(q):=s^2-2q_0s+|q|^2$ is the pseudo-Cauchy kernel.
\\The kernel $F_L(s,q)$ (resp. $F_R(s,q)$) is  right slice hyperholomorphic in the variable $s$ (resp. left slice hyperholomorphic) and is left axially monogenic in the variable $q$ (resp. right axially monogenic).

\medskip

It is possible to factorize the Laplace operator appearing in the  diagram \eqref{diagintro}
in the following two different ways. Precisely,
if $\mathcal{D}:=\partial_{q_0}+ \sum_{i=1}^3 e_i \partial_{q_i}$
  and  $ \mathcal{\overline{D}}:=\partial_{q_0}- \sum_{i=1}^3 e_i \partial_{q_i}$  are the Cauchy-Fueter operator and its conjugate, respectively, then
  we have
  $$
  \Delta=\mathcal{D}\overline{\mathcal{D}}=\overline{\mathcal{D}}\mathcal{D}.
  $$
Even though the operators $\mathcal{D}$ and $\overline{\mathcal{D}}$ commute
  the factorizations $\mathcal{D}\overline{\mathcal{D}}$ and $\overline{\mathcal{D}}\mathcal{D}$
  give rise to different functions spaces according to the order
   $\mathcal{D}$ and $\overline{\mathcal{D}}$ applied to a slice hyperholomorphic function.
 In fact, if we first apply operator $\mathcal{D}$ to functions in $\mathcal{SH}(\Omega_D)$ we have:
 $$
\begin{CD}
	\textcolor{black}{\mathcal{O}(D)}  @>T_{F1}>> \textcolor{black}{\mathcal{SH}(\Omega_D)} @>\mathcal{D}>> \textcolor{black}{\mathcal{AH}(\Omega_D)}  @>\ \    \mathcal{\overline{D}} >>\textcolor{black}{\mathcal{AM}(\Omega_D)},
\end{CD}
$$
where $ \mathcal{AH}(\Omega_D)$ is the class of axially harmonic function, i.e., functions of the form \eqref{axial} and in the kernel of the Laplace operator in four real variables.
In the case we apply $\overline{\mathcal{D}}$ to functions in $\mathcal{SH}(\Omega_D)$ we obtain
$$
\begin{CD}
	\textcolor{black}{\mathcal{O}(D)}  @>T_{F1}>> \textcolor{black}{\mathcal{SH}(\Omega_D)} @>\mathcal{\overline{D}}>> \textcolor{black}{\mathcal{AP}_2(\Omega_D)}  @>\ \  \mathcal{D} >>\textcolor{black}{\mathcal{AM}(\Omega_D)},
\end{CD}
$$
where $ \mathcal{AP}_2(\Omega_D)$ is the class of axially polyanalytic functions of order 2, i.e., functions of the form \eqref{axial} and in the kernel of the operator $ \mathcal{D}^2$.
The above cases lead us to the following definition.
\begin{definition}[Fine structures of the spectral theory on the $S$-spectrum]\label{DEFFINE} We will call \emph{fine structures of the spectral theory on the $S$-spectrum} the set of functions spaces and the associated functional calculi induced by a factorization of the operator $T_{F2}$, in the Fueter extension theorem.
\end{definition}
In \cite{CDPS1, Polyf1} we give an integral representation for axially harmonic functions and axially polyanalytic functions of order 2,
we now recall such integral representations.
Let $ W \subset \mathbb{H}$ be an open set. Let $U$ be a slice Cauchy domain such that $ \bar{U} \subset W$.
For $J \in \mathbb{S}$ and $ds_J=ds(-J)$ we have that:
\newline
$\bullet$ If the function $f$ is left (resp. right) slice hyperholomorphic in $W$, then the function $ \tilde{f}(q)= \mathcal{D}f(q)$ (resp. $ f(q) \mathcal{D}$) is harmonic and it admits the following integral representation for $q \in U$
\begin{equation}
\label{inteharmo}
\tilde{f}(q)= \frac{1}{ 2 \pi} \int_{\partial(U \cap \mathbb{C}_J)} \mathcal{D} S^{-1}_L(s,q)  ds_J f(s) \quad \left(\hbox{resp.} \,  \tilde{f}(q)= \frac{1}{2 \pi} \int_{\partial(U \cap \mathbb{C}_J)}  f(s) ds_J  S^{-1}_R(s,q)\mathcal{D}  \right).
\end{equation}
$\bullet$ If the function $f$ is left (resp. right) slice hyperholomorphic in $W$, then the function $ \breve{f}^{0}(q)= \mathcal{\overline{D}}f(q)$ (resp. $ f(q) \mathcal{\overline{D}}$) is polyanalytic of order 2 and it admits the following integral representation for $q \in U$
\begin{equation}
\label{intepoly}
\breve{f}^{0}(q)= \frac{1}{2 \pi} \int_{\partial(U \cap \mathbb{C}_J)} \mathcal{\overline{D}} S^{-1}_L(s,q) ds_J f(s) \quad \left(\hbox{resp.} \, \breve{f}^{0}(q)= \int_{\partial(U \cap \mathbb{C}_J)} f(s) ds_J  S^{-1}_R(s,q)\mathcal{\overline{D}} \right).
\end{equation}
Our main results are an extension of  the Riesz-Dunford functional calculus, see \cite{RD} for the classes of functions of the quaternionic fine structures.
Precisely, let $f$ be a holomorphic function defined on an open set containing $ \overline{\Omega}$,
where $ \Omega \subset \mathbb{C}$ is an open set that contains the spectrum of the bounded operator $B$ on a complex Banach space $X_\mathbb{C}$.
 The holomorphic functional calculus $f(B)$ is defined as
$$ f(B)= \frac{1}{2 \pi i} \int_{\partial \Omega} (\lambda \mathcal{I}-B)^{-1}f(\lambda) d \lambda.$$
In order to define the functional calculus for an unbounded operator $A$, we assume that the function $f$ is holomorphic at infinity and on the spectrum $\sigma(A)$ of the closed linear operator $A$ whose domain is contained in $X_\mathbb{C}$.
\\Let us assume that the resolvent $\rho(A)$ is nonempty. We set
$\Phi(\lambda):= (\lambda- \alpha)^{-1}$, $ \Phi(\infty)=0$ and $ \Phi(\alpha)=\infty$, for $ \alpha \in \rho(A)$. We define the functional calculus for unbounded operators as $f(A):=\phi(B)$ where $\phi(\mu)= f(\Phi^{-1}(\mu))$ and $B:= (A- \alpha \mathcal{I})^{-1}. $ For this functional calculus we have the following integral representation
\begin{equation}
\label{Ries}
f(A)= f(\infty) \mathcal{I}+ \frac{1}{2 \pi i} \int_{\Gamma} (\lambda \mathcal{I}-A)^{-1} f(\lambda) d \lambda,
\end{equation}
where $\Gamma$ consists of a finite numbers of Jordan arcs,
that surround the spectrum $\sigma(A)$ and the point at infinity.
We recall that the function $f$ has to be holomorphic in an open set that contains $ \Gamma$ and its interior.
Moreover, formula \eqref{Ries} implies that $f(A)$ is independent from the parameter $ \alpha$.
\newline
\newline
The harmonic and polyanalytic functional calculi for bounded quaternionic operators with
commuting components were introduced
in \cite{CDPS1, Polyf1} where operators of the form $T=T_0+e_1T_1+e_2T_2+e_3T_3$ are considered. The operator $ \bar{T}=T_0-e_1T_1-e_2T_2-e_3T_3$ is called the conjugate  of $T$.
The harmonic resolvent operator is defined as
$$ \mathcal{Q}_{c,s}(T)^{-1}:= (s^{2}-(T+ \bar{T})s+T \bar{T})^{-1}, \ \ \ s\in \rho_S(T),$$
and it tuned out to be the well-known commutative pseudo $S$-resolvent operator.
The polyanalytic (left) resolvent operator is defined as
$$ P_2^L(s,T)=-F_L(s,T)s+T_{0}F_L(s,T), \quad \hbox{where} \quad F_L(s,T)=-4(s- \bar{T}) \mathcal{Q}_{c,s}(T)^{-2}.$$
Similarly we defined the polyanalytic right resolvent operator.
In \cite{CDPS1, Polyf1} we proved that the harmonic functional calculus (also called $Q$-functional) for $f \in \mathcal{SH}_L(U)$, given by
$$ \tilde{f}(T)=- \frac{1}{\pi} \int_{\partial(U \cap \mathbb{C}_J)} \mathcal{Q}_{c,s}(T)^{-1} ds_J f(s),$$
and the $P_2$-functional calculus, defined as
$$ \breve{f}^{\circ}(T):= \frac{1}{2 \pi} \int_{\partial(U \cap \mathbb{C}_J)} P_2^L(s,T) ds_J f(s),$$
are well defined since the integrals do not depend on the open set $U$ nor on $J \in \mathbb{S}$. Here $U$ denotes a suitable open set that has smooth boundary and contains the $S$-spectrum of $T$.
\newline
\newline
The goal of this paper is to define the  harmonic functional calculus and the $P_2$-functional calculus for commuting unbounded operators
in the spirit of the holomorphic functional calculus for closed operators mentioned above.
Let us consider $f$ to be a suitable slice hyperholomorphic function and let
$\tilde{\psi}$ be defined as:
$$ \tilde{\psi}(q)= \mathcal{D}(\phi(q)),$$
where $\phi(q):=(f \circ \phi_{\alpha}^{-1})(q)$, $q:= \phi_{\alpha}(s)= (s- \alpha)^{-1}$, $\phi_{\alpha}(\infty)=0$, $\phi_{\alpha}(\alpha)=\infty$. We set $A:=(T- \alpha \mathcal{I})^{-1}$ for $ \alpha \in \rho_S \cap \mathbb{R} \neq \emptyset$. The $Q$-functional calculus for  closed quaternionic operators is defined as
$$ \tilde{f}(T):=(A \bar{A}) \tilde{\psi}(A).$$
By using the same notations we define
$$ \breve{\psi}^0(q):= \mathcal{\overline{D}}(q^2\phi(q)),$$
and, by means of this function, we define the $P_2$-functional calculus for closed operators
$$ \breve{f}^0(T):= \breve{\psi}^0(A).$$
Moreover, we show integral representations for $ \tilde{f}(T)$ and $ \breve{f}^0(T)$, which are the counter part of formula \eqref{Ries} for the Riesz-Dunford functional calculus.
\\This fact is not necessarily expected because the $Q$-functional and $P_2$-functional calculi are defined through integral transforms, whereas the Riesz-Dunford functional calculus is based on the Cauchy formula of holomorphic functions.
\newline
\newline
\emph{Outline of the paper.} The paper consists of five sections, the first one being this introduction. In Section 2 we briefly revise the $S$-functional calculus and the $F$-functional calculus, both for bounded and unbounded operators with commuting components with some new insight to this calculus.
 In Section 3 we establish and prove the main properties for the harmonic functional calculus for closed operators.
 In Section 4 we define the $P_2$-functional calculus for closed operators and we prove its main properties.  Section 5 contains some  concluding remarks.

\section{Preliminaries results on quaternionic operators}
In this section we recall some results on the $S$-functional calculus and $F$-functional calculus for bounded and unbounded operators, see for more details \cite{CGK},
 in order to give a complete picture of the fine structures on the $S$-spectrum.

\medskip
{\bf Bounded quaternionic linear operators.}
Let $X$ be a two sided quaternionic Banach module of the form $X= X_{\mathbb{R}} \otimes \mathbb{H}$, where $X_{\mathbb{R}}$ is a real Banach space. We denote by $ \mathcal{B}(X)$ the Banach space of all bounded right linear operators acting on $X$.
In order to define the $S$-resolvent set and the $S$-spectrum  we define
the operator
$$ \mathcal{Q}_s(T):= T^{2}-2s_0T+|s|^2 \mathcal{I}$$
for $T \in \mathcal{B}(X)$.

\begin{definition}
Let $T \in \mathcal{B}(X)$. We define the $S$-resolvent set $\rho_S(T)$ of $T$ as
$$ \rho_S(T):= \{s \in \mathbb{H}\, : \, \mathcal{Q}_s(T)^{-1} \in \mathcal{B}(X)\}.$$
The $S$-spectrum $\sigma_S(T)$ of $T$ is defined as
$$ \sigma_S(T):= \mathbb{H} \setminus \rho_S(T).$$
The operator
$$
\mathcal{Q}_s(T)^{-1}= (T^{2}-2s_0T+|s|^2 \mathcal{I})^{-1}, \ \ {\rm for} \ \ s\in \rho_S(T)
$$
is called the pseudo $S$-resolvent operator of $T$ at $s$.
\end{definition}

Due to the lack of commutativity there exist two different resolvent operators.

\begin{definition}
Let $T \in \mathcal{B}(X)$ and $s \in \rho_S(T)$. Then the left (resp. right) $S$-resolvent operator is defined as
$$ S^{-1}_L(s,T)=- \mathcal{Q}_s^{-1}(T)(T- \bar{s} \mathcal{I}), \qquad \left( \hbox{resp.} \quad S^{-1}_R(s,T):=-(T- \bar{s} \mathcal{I}) \mathcal{Q}_s(T)^{-1} \right).$$
\end{definition}

We denote by $ \mathcal{SH}_L(\sigma_S(T))$, $ \mathcal{SH}_R(\sigma_S(T))$ and $ \mathcal{N}(\sigma_S(T))$ the sets of left, right and intrinsic slice hyperholomorphic functions, respectively, with $ \sigma_{S}(T) \subset \hbox{dom}(f)$.
For these classes of functions we introduce the $S$-functional calculus for bounded operators.
\begin{definition}[$S$-functional calculus for bounded operators]
\label{Sfun}
Let $U$ be a bounded slice Cauchy domain that contains $\sigma_S(T)$ and suppose that
$ \overline{U}$ is contained in the domain of $f$. Set $ds_J=(-J)ds$ for $J \in \mathbb{S}$ and let $f \in \mathcal{SH}_L(\sigma_S(T))$ (resp. $\mathcal{SH}_R(\sigma_S(T))$) we define
\begin{equation}
\label{S1}
f(T):= \frac{1}{ 2 \pi} \int_{\partial(U \cap \mathbb{C}_J)} S^{-1}_L(s,T) ds_J f(s), \quad \left({\rm resp}. \, \, f(T):= \frac{1}{ 2 \pi} \int_{\partial(U \cap \mathbb{C}_J)} f(s) ds_JS^{-1}_R(s,T)\right).
\end{equation}
\end{definition}
The $S$-functional calculus for bounded operators is well posed, i.e., the integrals in \eqref{S1} depend neither on $U$ nor on the imaginary unit $J \in \mathbb{S}$. Standard references for the $S$-functional calculus are the books \cite{FJBOOK,CGK, ColomboSabadiniStruppa2011}.

Now, we recall the $F$-functional calculus, which was introduced for the first time in \cite{CSS} and further developed in \cite{CG, CDS,CDS1}. This is monogenic functional calculus in the same spirit of McIntosh and collaborators, see \cite{J, JM}, but it is based on the theory of the $S$-spectrum.
The $F$-functional calculus arises by the Fueter theorem in integral form (see Theorem \ref{Fintform})  by formally replacing the quaternion $q$ with a suitable quaternionic operator $T$.

For this functional calculus we will consider bounded operators of the form $T=T_0+e_1T_1+T_2e_2+T_3e_3$ with commuting components $T_i$ acting on a real vector space $X_{\mathbb{R}}$, i.e. $T_i \in \mathcal{B}(X_{\mathbb{R}})$ for $i=0,1,2,3$. The subset of $ \mathcal{B}(X)$ given by operators $T$ with commuting components $T_i$ will be denoted by $ \mathcal{BC}(X)$, for this operators we define the $S$-spectrum in a different, but equivalent way.

\begin{definition}
Let $T \in \mathcal{BC}(X)$. For $s \in \mathbb{H}$ we set
$$ \mathcal{Q}_{c,s}(T):= s^{2} \mathcal{I}-s (T+ \bar{T})+ T \bar{T},$$
where $ \bar{T}:=T_0-T_1e_1-T_2e_2-T_3e_3$. We define the $F$-resolvent set as
$$ \rho_F(T):= \{s \in \mathbb{H}\,: \, \mathcal{Q}_{c,s}(T)^{-1} \in \mathcal{B}(X)\},$$
and the $F$-spectrum of $T$ as
$$ \sigma_F(T)= \mathbb{H} \setminus \rho_F(T).$$
\end{definition}
In \cite{CS} it is proved that the $F$-spectrum is the commutative version of the $S$-spectrum, i.e. for $T \in \mathcal{BC}(X)$ we have
$$ \sigma_F(T)= \sigma_S(T).$$
The operator $\mathcal{Q}_{c,s}(T)^{-1}$, for $s\in\rho_F(T)$,  is called the commutative pseudo $S$-resolvent operator of $T$ at $s$.
In \cite{CS} a commutative version of the $S$-functional calculus for bounded operators is established and in this case we have to deal with the following resolvent operators.

\begin{definition}
Let $T \in \mathcal{BC}(X)$ and $s \in \rho_S(T)$. We define the left (resp. right) commutative $S$-resolvent operator as
$$ S^{-1}_L(s,T)=( s\mathcal{I}- \bar{T}) \mathcal{Q}_{c,s}(T)^{-1}, \quad \left({\rm resp}. \quad S^{-1}_R(s,T)= \mathcal{Q}_{c,s}(T)^{-1}(s \mathcal{I}- \bar{T})\right).$$
\end{definition}

\begin{remark}
For the sake of simplicity we denote the commutative version of the $S$-resolvent operators with the same symbol adopted for the noncommutative ones since no ambiguity will arise in the sequel.
\end{remark}
By means of the above resolvent operators it is possible to define the commutative $S$-functional calculus as in Definition \ref{Sfun}.
Now, we have all the tools to give the definition for the $F$-functional calculus.

\begin{definition}[$F$-resolvent operators]
Let $T \in \mathcal{BC}(X)$. We define the left (resp. right) $F$-resolvent operator for $s \in \rho_S(T),$ as
\begin{equation}
\label{star1}
F_{L}(s,T)=-4 ( s\mathcal{I}- \bar{T}) \mathcal{Q}_{c,s}(T)^{-2},  \quad \left({\rm resp}. \quad F_R(s,T)=-4 \mathcal{Q}_{c,s}(T)^{-2}(s \mathcal{I}- \bar{T}) \right).
\end{equation}
\end{definition}

 \begin{definition}[$F$-functional calculus for bounded operators]
Let $U$ be a slice Cauchy domain that contains $\sigma_S(T)$ and assume that
$\overline{U}$ is contained in the domain of $f$. Let $T=T_1e_1+T_2e_2+T_3e_3 \in \mathcal{BC}(X)$, assume that the operators $T_{\ell}$, $ \ell=1,2,3$ have real spectrum and set $ds_J=ds(-J)$, where $J \in \mathbb{S}$. For $ \breve{f}= \Delta f(q)$ with $f \in \mathcal{SH}_L(\sigma_S(T))$ (resp. $f \in \mathcal{SH}_R(\sigma_S(T)))$ we define
\begin{equation}
\label{f1inte}
\breve{f}(T):= \frac{1}{ 2\pi} \int_{\partial(U \cap \mathbb{C}_J)} F_L(s,T) ds_j f(s) \quad \left({\rm resp}. \quad \breve{f}(T):= \frac{1}{ 2\pi} \int_{\partial(U \cap \mathbb{C}_J)} f(s) ds_j F_R(s,T)\right).
\end{equation}
 \end{definition}
 As well as the $S$-functional calculus the $F$-functional calculus is well defined, since the integrals in \eqref{f1inte} depend neither on $U$ and nor on the imaginary unit $J \in \mathbb{S}.$

 \begin{remark}\label{RMKDELKER}
 Since  $\breve{f}= \Delta f(q)$  we observe that $\Delta$ has a kernel and the independence on the kernel
 is discussed in Remark 7.1.13 in \cite{CGK}.
 The independence of the $F$-functional calculus from the ${\rm ker}(\Delta)$ is obtained
  by an application  of the monogenic functional calculus of McIntosh
  that requires that operators $T_{\ell}$, $ \ell=0,1,2,3$ have real spectrum
  and one of the $T_{\ell}$ has to be the zero operator. This requirement is enforced
  in order not to disconnect the monogenic resolvent set and it is usually take $T_0$ to be zero.
 \end{remark}


\medskip
{\bf Unbounded quaternionic linear operators.}
Now we recall the main notions for the $S$-functional calculus and $F$-functional calculus for unbounded operators, introduced in \cite{CS, CSF}. These functional calculi are defined for a peculiar class of operators, that are defined as follows.

\begin{definition}
Let $X$ be a two-sided Banach space. A right linear operator $T: \hbox{dom}(T) \subset X \to X$ is called closed if its graph is closed in the Cartesian product $X \times X$.
\end{definition}

\begin{definition}
Let $T_{\ell}: \hbox{dom}(T_\ell) \subset X \to X$ be a linear closed operators for $ \ell=0,...,3$ such that $T_{\ell}T_k=T_kT_{\ell}$ on $ \hbox{dom}(T_{\ell} T_k) \cap \hbox{dom}(T_k T_{\ell})$ for $ \ell$, $k=0,...,3$. Then $ \hbox{dom}(T)= \cap_{\ell=0}^3 \hbox{dom}(T_{\ell})$ is the domain of the quaternionic right linear operators $T=T_0+ \sum_{\ell=1}^3 e_{\ell} T_{\ell}: \, \hbox{dom}(T) \subset X \to X,$ and $\bar{T}=T_0- \sum_{\ell=1}^3 e_{\ell} T_{\ell}: \, \hbox{dom}(T) \subset X \to X$, since $\hbox{dom}(T)=\hbox{dom}(\bar{T})$. We denote this set of closed right linear operators with commuting components by $ \mathcal{KC}(X)$.
\end{definition}
Now, we give the notion of $S$-spectrum in this setting.
\begin{definition}
Let $T \in \mathcal{KC}(X)$. We denote by $\rho_S(T)$ the $S$-resolvent set of $T$ defined as
$$ \rho_S(T):= \{s \in \mathbb{H}\, : \, \mathcal{Q}_{c,s}^{-1}(T) \in \mathcal{B}(X)\},$$
with
$$
\mathcal{Q}_{c,s}(T):=s^{2} \mathcal{I}-2sT_0+T \bar{T}, \ \ {\rm where}\ \
\mathcal{Q}_{c,s}(T): {\rm dom}(T\bar{T})\to X.
$$
 We define the $S$-spectrum $\sigma_S(T)$ of $T$ as
$$ \sigma_S(T)= \mathbb{H} \setminus \rho_S(T).$$
The extended $S$-spectrum is defined as
$$ \bar{\sigma}_S(T):= \sigma_S(T) \cup \{\infty\}.$$
\end{definition}

\begin{remark}
In the sequel we will always assume that the $S$- resolvent is nonempty.
\end{remark}
\begin{definition}
Let $T \in \mathcal{KC}(X)$. For $s \in \rho_S(T)$ we define
$$
 \mathcal{Q}_{c,s}(T)^{-1}:= (s^{2}-s(T+ \bar{T})+T \bar{T})^{-1},
$$
the commutative pseudo $S$-resolvent operator of $T$ at $s$,
where $\mathcal{Q}_{c,s}(T)^{-1}$
maps $X$ to $\hbox{dom}(T \bar{T})$ for $s\in \rho_S(T)$.
\end{definition}

\begin{definition}[The (commutative) $S$-resolvent operators in the unbounded case]\label{S-reslUNCOOM}
Let $T \in \mathcal{KC}(X)$. For $s \in \rho_S(T)$, we define the left (resp. right) $S$-resolvent operator of $T$ at $s$ as
$$S^{-1}_L(s,T)=(s \mathcal{I}- \bar{T}) \mathcal{Q}_{c,s}(T)^{-1}\quad \left(\hbox{reps.} \quad S^{-1}_R(s,T)=\mathcal{Q}_{c,s}(T)^{-1}s - \bar{T}\mathcal{Q}_{c,s}(T)^{-1} \right).$$
\end{definition}
Observe that because of how $S^{-1}_L(s,T)$ and $S^{-1}_R(s,T)$ are defined
they map $X$ to $\hbox{dom}(T)$.

\begin{definition}
Let $T \in \mathcal{KC}(X)$. A function $f$ is left slice hyperholomorphic in $ \overline{\sigma}_S(T)$ if and only if $f$ is left slice hyperholomorphic with $\sigma_S(T)\subset \hbox{dom}(f)$ and if furthermore, $ \mathbb{H} \setminus B_r(0)\subset \hbox{dom}(f)$ for some $r >0$ and $f( \infty)= \lim_{q \to \infty} f(q)$ exists.
We denote this set of functions as $\mathcal{SH}_L(\overline{\sigma}_S(T))$.
\end{definition}
Similarly we can define the functions in $ \mathcal{SH}_R(\overline{\sigma}_S(T))$ and $\mathcal{N}(\overline{\sigma}_S(T))$.
\begin{definition}\label{phialp}
	\label{homo}
	For $ \alpha \in \rho_S(T) \cap \mathbb{R}\not=\emptyset$, we define the homomorphism
$ \phi_{\alpha}: \overline{\mathbb{H}} \to \overline{\mathbb{H}}$ as
	\begin{equation}
		\label{1first}
		p=\phi_{\alpha}(s)=(s- \alpha)^{-1}, \qquad \phi_{\alpha}(\alpha)=\infty, \qquad\phi_{\alpha}(\infty)=0.
	\end{equation}
\end{definition}
\begin{proposition}\label{simaAT}
Let $A:=(T- \alpha \mathcal{I})^{-1}$, for $ \alpha \in \rho_S(T) \cap \mathbb{R}$ and $ \phi_{\alpha}$ defined as above, then the mapping
$
f \mapsto f \circ \phi_{\alpha}^{-1}
$
 determines one-to-one correspondences between $\mathcal{SH}_L(\overline{\sigma}_S(T))$ (resp. $\mathcal{SH}_R(\overline{\sigma}_S(T))$ and  $\mathcal{SH}_L(\sigma_S(A))$ (resp. $\mathcal{SH}_L(\overline{\sigma}_S(T))$, and between $ \mathcal{N}(\overline{\sigma}_S(T))$ and $ \mathcal{N}(\sigma_S(A))$. Precisely we have

\[
\begin{split}
&\mathcal{SH}_L(\sigma_S(A))= \{f \,\circ \, \phi_{\alpha}^{-1}\, : \, f \in \mathcal{SH}_L(\overline{\sigma}_S(T))\},
\\
&
\mathcal{SH}_R(\sigma_S(A))= \{f \circ \phi_{\alpha}^{-1}\, : \, f \in \mathcal{SH}_R(\overline{\sigma}_S(T))\},
\\
&
\mathcal{N}(\sigma_S(A))= \{f \circ \phi_{\alpha}^{-1}\, : \, f \in \mathcal{N}_L(\overline{\sigma}_S(T))\}.
\end{split}
\]
\end{proposition}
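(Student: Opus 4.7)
The plan is to recast the claim as the composition of three observations: (i) a spectral mapping property that sends the extended $S$-spectrum of $T$ onto $\sigma_S(A)$; (ii) stability of left, right and intrinsic slice hyperholomorphicity under precomposition with an intrinsic slice hyperholomorphic function; and (iii) reversibility of the map via $\phi_\alpha$. Throughout, the key technical input is that $\alpha\in\mathbb{R}$ makes both $\phi_\alpha(s)=(s-\alpha)^{-1}$ and $\phi_\alpha^{-1}(p)=p^{-1}+\alpha$ intrinsic slice hyperholomorphic where defined.

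First I would verify $\phi_\alpha(\overline{\sigma}_S(T))=\sigma_S(A)$. Since $\alpha\in\rho_S(T)\cap\mathbb{R}$, $A=(T-\alpha\mathcal{I})^{-1}\in\mathcal{B}(X)$, so $\sigma_S(A)$ is a nonempty bounded axially symmetric compact set. The identity
\[
\mathcal{Q}_{c,p}(A)=p^2\bigl(\mathcal{I}-pA^{-1}\bigr)\bigl(\mathcal{I}-p\overline{A^{-1}}\bigr)A\bar A=p^2\mathcal{Q}_{c,\phi_\alpha^{-1}(p)}(T)\,A\bar A
\]
(after writing $A^{-1}=T-\alpha\mathcal{I}$ and using that $\alpha$ is real so $\overline{A^{-1}}=\bar T-\alpha\mathcal{I}$) shows that $p\in\rho_S(A)\setminus\{0\}$ iff $\phi_\alpha^{-1}(p)\in\rho_S(T)$, while $0\in\rho_S(A)$ iff $A$ has a bounded inverse, which is exactly the convention $\phi_\alpha(\infty)=0\in\rho_S(A)$ encoded in $\infty\notin\overline{\sigma}_S(T)$ via the definition of the extended $S$-spectrum. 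Hence $\phi_\alpha$ restricted to $\overline{\sigma}_S(T)$ is a bijection onto $\sigma_S(A)$, and it is an axially symmetric homeomorphism.

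Second I would check that $f\mapsto f\circ \phi_\alpha^{-1}$ respects the three function classes. Because $\alpha$ is real, $\phi_\alpha^{-1}$ is intrinsic slice hyperholomorphic on $\mathbb{H}\setminus\{0\}$, so by the standard composition rule (an intrinsic slice hyperholomorphic function postcomposed with a left/right/intrinsic one yields the same kind, cf.~\cite{CGK}), $f\circ\phi_\alpha^{-1}$ is left (resp.\ right, resp.\ intrinsic) slice hyperholomorphic on $\phi_\alpha(\mathrm{dom}(f)\setminus\{\alpha\})$. The domain condition is also transported: if $\mathrm{dom}(f)\supset \sigma_S(T)\cup(\mathbb{H}\setminus B_r(0))$, then $\phi_\alpha(\mathrm{dom}(f)\setminus\{\alpha\})$ contains $\sigma_S(A)\setminus\{0\}$ together with a punctured neighbourhood of $0$, and since $f(\infty)=\lim_{q\to\infty}f(q)$ exists, the function $f\circ\phi_\alpha^{-1}$ extends continuously, hence slice hyperholomorphically by removable singularity, to $0$ with value $f(\infty)$. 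Thus $f\circ\phi_\alpha^{-1}\in\mathcal{SH}_L(\sigma_S(A))$ (and analogously in the right and intrinsic classes), noting that $\sigma_S(A)$ being bounded means no growth condition at infinity is required on the target side.

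Third, the inverse correspondence $g\mapsto g\circ\phi_\alpha$ is handled symmetrically: given $g\in\mathcal{SH}_L(\sigma_S(A))$ defined on a neighbourhood $V$ of $\sigma_S(A)$, the set $\phi_\alpha^{-1}(V)$ is a neighbourhood of $\sigma_S(T)$, and the value $g(0)$ supplies the limit at infinity so that $g\circ\phi_\alpha\in\mathcal{SH}_L(\overline{\sigma}_S(T))$. The identities $\phi_\alpha\circ\phi_\alpha^{-1}=\mathrm{id}$ and $\phi_\alpha^{-1}\circ\phi_\alpha=\mathrm{id}$ then give the three announced bijections. The main obstacle I anticipate is the careful bookkeeping at the special points $\alpha$ and $\infty$: showing that the apparent singularity of $f\circ\phi_\alpha^{-1}$ at $0$ is indeed removable and that the axial symmetry of the spectra is preserved by $\phi_\alpha$ both rely on $\alpha$ being real, so keeping that assumption active at every step is the essential bit.
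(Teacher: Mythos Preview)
Your approach is sound and essentially correct. The paper itself does not give a proof at all: it simply records ``It is \cite[Cor.~5.2.4]{CGK}.'' Your three-step outline (spectral mapping $\phi_\alpha(\overline{\sigma}_S(T))=\sigma_S(A)$, composition with the intrinsic function $\phi_\alpha^{-1}$ preserves left/right/intrinsic slice hyperholomorphicity, and reversibility via $\phi_\alpha$) is exactly the argument one would expect to find in the cited reference, and your emphasis on $\alpha\in\mathbb{R}$ as the mechanism making $\phi_\alpha^{\pm1}$ intrinsic is the right point.

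Two small remarks. First, in your displayed identity for $\mathcal{Q}_{c,p}(A)$ there is a spurious factor $p^{2}$ on the middle expression: one has $\mathcal{Q}_{c,p}(A)=(\mathcal{I}-pA^{-1})(\mathcal{I}-p\,\overline{A^{-1}})A\bar A=p^{2}\mathcal{Q}_{c,\phi_\alpha^{-1}(p)}(T)\,A\bar A$, which is precisely the relation recorded later in the paper as Theorem~\ref{protQAQT}. Second, your treatment of the point at infinity is slightly off: in this paper $\overline{\sigma}_S(T):=\sigma_S(T)\cup\{\infty\}$ always contains $\infty$ by definition, so the correct statement is simply that $\phi_\alpha(\infty)=0\in\sigma_S(A)$ (since $T$ unbounded forces $A$ non-invertible), not that $\infty\notin\overline{\sigma}_S(T)$. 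Neither point affects the validity of your argument.
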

\begin{proof}
It is \cite[Cor. 5.2.4]{CGK}.
\end{proof}
\begin{definition}[$S$-functional for unbounded operators]
Let $T \in \mathcal{KC}(X)$.
   We assume that $\rho_S(T) \cap \mathbb{R} \neq \emptyset$ and suppose that $f \in \mathcal{SH}_L(\overline{\sigma}_S(T))$. Let us consider
	$\phi(p)=f(\phi_{\alpha}^{-1}(p))$
	and the operator
	$$ A:=(T-\alpha \mathcal{I})^{-1}, \qquad \alpha \in \rho_S(T) \cap \mathbb{R}.$$
	We define
	\begin{equation}
		\label{Sfun1}
		f(T):=\phi(A).
	\end{equation}
\end{definition}
We observe that the $S$-functional calculus for commuting unbounded operators is well defined. This is based on the fact that $f(T)=\phi((T-\alpha \mathcal{I})^{-1})$ does not depend on $ \alpha$, thanks to the following theorem.

\begin{theorem}
Let $T \in \mathcal{KC}(X)$ and
let $\rho_S(T) \cap \mathbb{R} \neq\emptyset$. Assume that $f \in \mathcal{SH}_L(\overline{\sigma}_S(T))$ and $f(T)$ is the operator defined in \eqref{Sfun1}. Then
$$ f(T)=f(\infty) \mathcal{I}+ \frac{1}{2 \pi} \int_{\partial(U \cap \mathbb{C}_J)}S^{-1}_L(s,T) ds_J f(s),$$
for every unbounded slice Cauchy domain with $\sigma_S(T) \subset U$ and $ \overline{U} \subset \hbox{dom}(f)$ and every imaginary unit $J \in \mathbb{S}$. Similarly, if $f \in \mathcal{SH}_R(\overline{\sigma}_S(T))$ and $f(T)$ the operator defined in \eqref{Sfun1}, then
$$ f(T)=f(\infty) \mathcal{I}+ \frac{1}{2 \pi} \int_{\partial(U \cap \mathbb{C}_J)} f(s) ds_J S^{-1}_R(s,T),$$
for every unbounded slice Cauchy domain with $\sigma_S(T) \subset U$ and $ \overline{U} \subset \hbox{dom}(f)$ and every imaginary unit $J \in \mathbb{S}$.
\end{theorem}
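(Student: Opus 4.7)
The plan is to reduce the unbounded case to the bounded one via the resolvent transform $\phi_\alpha$ and then change variables in the integral. Since $\alpha\in\rho_S(T)\cap\mathbb{R}$, the operator $A=(T-\alpha\mathcal{I})^{-1}$ is bounded, and by Proposition \ref{simaAT} the composition $\phi:=f\circ\phi_\alpha^{-1}$ belongs to $\mathcal{SH}_L(\sigma_S(A))$ (with $\phi(0)=f(\infty)$). Thus the bounded $S$-functional calculus applied to $\phi(A)=f(T)$ gives
\[
f(T)=\frac{1}{2\pi}\int_{\partial(V\cap\mathbb{C}_J)}S^{-1}_L(p,A)\,dp_J\,\phi(p),
\]
where $V$ is any bounded slice Cauchy domain with $\sigma_S(A)\subset V\subset\overline{V}\subset\operatorname{dom}(\phi)$.

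Next, given an unbounded slice Cauchy domain $U$ with $\sigma_S(T)\subset U$ and $\overline{U}\subset\operatorname{dom}(f)$, I will first assume $\alpha\notin\overline{U}$ (which can always be arranged since $\alpha\in\rho_S(T)$) and take $V=\phi_\alpha(U\cup\{\infty\})$, a bounded slice Cauchy domain containing $0$. I then execute the substitution $p=(s-\alpha)^{-1}$ in the integral. Since $\alpha\in\mathbb{R}$ this substitution preserves each slice $\mathbb{C}_J$, and on such a slice $dp_J=-(s-\alpha)^{-2}\,ds_J$; moreover the orientation conventions on $\partial V$ and $\partial U$ match, since the inversion exchanges the bounded/unbounded roles.

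The computational heart of the argument is a transformation identity for the resolvent kernel. A direct calculation using $\bar{A}=(\bar{T}-\alpha\mathcal{I})^{-1}$ and the commutativity of the $T_i$'s yields
\[
\mathcal{Q}_{c,p}(A)=(s-\alpha)^{-2}(T-\alpha\mathcal{I})^{-1}(\bar{T}-\alpha\mathcal{I})^{-1}\mathcal{Q}_{c,s}(T),
\]
and hence
\[
S^{-1}_L(p,A)=-(s-\alpha)\,(T-\alpha\mathcal{I})\,S^{-1}_L(s,T).
\]
Combining this with the algebraic identity $(s\mathcal{I}-\bar{T})(T-\alpha\mathcal{I})=(s-\alpha)(s\mathcal{I}-\bar{T})-\mathcal{Q}_{c,s}(T)$, which forces
\[
(T-\alpha\mathcal{I})\,S^{-1}_L(s,T)=(s-\alpha)\,S^{-1}_L(s,T)-\mathcal{I},
\]
gives the clean formula $S^{-1}_L(p,A)\,dp_J=\bigl[S^{-1}_L(s,T)-(s-\alpha)^{-1}\mathcal{I}\bigr]\,ds_J$.

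Finally, I will evaluate $\frac{1}{2\pi}\int_{\partial(U\cap\mathbb{C}_J)}(s-\alpha)^{-1}\,ds_J\,f(s)$ by a standard truncation argument: intersect $U$ with a large ball $B_R(0)$, apply the Cauchy theorem on the bounded slice Cauchy domain $U\cap B_R(0)$ (on which the integrand is slice hyperholomorphic because $\alpha\notin U$), and let $R\to\infty$; the contribution of the large circle tends to $2\pi f(\infty)$ since $f$ has the limit $f(\infty)$ at infinity, which yields the value $-f(\infty)$. Putting the pieces together produces the desired integral representation for $f(T)$ for $U$ with $\alpha\notin\overline{U}$, and the independence of $\frac{1}{2\pi}\int_{\partial(U\cap\mathbb{C}_J)}S^{-1}_L(s,T)\,ds_J\,f(s)$ on the choice of $U$, proved via Cauchy's theorem applied on the symmetric difference of two admissible domains, extends the identity to arbitrary admissible $U$. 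The right slice hyperholomorphic case follows the same outline. The main obstacle is the algebraic derivation of the resolvent transform identity together with careful bookkeeping of orientation and of the residue at infinity; once that identity is in hand the rest is a standard Cauchy-theorem argument.
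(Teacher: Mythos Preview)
Your proof is correct and follows precisely the route indicated by the paper: the paper does not prove this theorem in the text but points to the resolvent identity \eqref{Sunbou}, namely $S^{-1}_L(p,A)=p^{-1}\mathcal{I}-S^{-1}_L(s,T)p^{-2}$, as the key ingredient; your intermediate formula $S^{-1}_L(p,A)=-(s-\alpha)(T-\alpha\mathcal{I})S^{-1}_L(s,T)$ combined with $(T-\alpha\mathcal{I})S^{-1}_L(s,T)=(s-\alpha)S^{-1}_L(s,T)-\mathcal{I}$ is exactly an equivalent derivation of \eqref{Sunbou}, and after multiplying by $dp_J=-p^2\,ds_J$ you obtain the same integrand transform. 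The handling of the residue at infinity via the large-ball truncation and the reduction to $\alpha\notin\overline{U}$ are standard and match what one finds in the cited references.
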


To prove the above theorem it is crucial to recall the relation between the resolvents $S^{-1}_L(s,T)$ and $S^{-1}_L(p,A)$ given by the following result, see \cite[Thm. 6.12]{CS}.

\begin{theorem}
Let $ \alpha \in \rho_S(T) \cap \mathbb{R}$. If $\phi_{\alpha}$ is defined as \eqref{1first}, then $\phi_{\alpha}(\overline{\sigma}_S(T))=\sigma_S(A)$. Moreover, the following identity holds
\begin{equation}
\label{Sunbou}
S^{-1}_L(p,A)=p^{-1} \mathcal{I}-S^{-1}_L(s,T) p^{-2},
\end{equation}
where $S^{-1}_L(p,A)$ is the commutative $S$-resolvent operator in the unbounded case in Definition \ref{S-reslUNCOOM}.

\end{theorem}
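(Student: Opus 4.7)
The plan is to establish both assertions by deriving a single algebraic identity between the pseudo $S$-resolvent operators $\mathcal{Q}_{c,s}(T)$ and $\mathcal{Q}_{c,p}(A)$ under the substitution $p=(s-\alpha)^{-1}$, and then exploiting it to prove \eqref{Sunbou} by direct manipulation of the definition of $S^{-1}_L$.

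For the spectral mapping, I would set $s=\alpha+p^{-1}$ and introduce the auxiliary operator $M:=(T-\alpha\mathcal{I})(\bar T-\alpha\mathcal{I})=\mathcal{Q}_{c,\alpha}(T)$, which is invertible with bounded inverse because $\alpha\in\rho_S(T)\cap\mathbb{R}$. Since $\alpha$ is real and the components of $T$ commute, a short calculation gives $\bar A=(\bar T-\alpha\mathcal{I})^{-1}$, whence
$$A+\bar A=(T+\bar T-2\alpha\mathcal{I})M^{-1},\qquad A\bar A=M^{-1}.$$
Plugging these into the definition of $\mathcal{Q}_{c,p}(A)$ yields
$$\mathcal{Q}_{c,p}(A)=\bigl[p^2M-p(T+\bar T-2\alpha\mathcal{I})+\mathcal{I}\bigr]M^{-1},$$
and on the other hand, substituting $s=\alpha+p^{-1}$ into $\mathcal{Q}_{c,s}(T)=s^2\mathcal{I}-s(T+\bar T)+T\bar T$ and multiplying by $p^2$ produces the same bracket. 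Hence
$$\mathcal{Q}_{c,p}(A)=p^2\,\mathcal{Q}_{c,s}(T)\,M^{-1}, \qquad \mathcal{Q}_{c,p}(A)^{-1}=p^{-2}M\,\mathcal{Q}_{c,s}(T)^{-1}.$$
Since $M^{-1}$ is bounded, this equivalence shows that for $p\neq 0$, $\mathcal{Q}_{c,p}(A)^{-1}$ is bounded if and only if $\mathcal{Q}_{c,s}(T)^{-1}$ is; combining with the explicit behaviour at $p=0$ (corresponding to $s=\infty\in\bar\sigma_S(T)$, hence $0\in\sigma_S(A)$) proves $\phi_\alpha(\bar\sigma_S(T))=\sigma_S(A)$.

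For the resolvent identity, I would insert the formula for $\mathcal{Q}_{c,p}(A)^{-1}$ into $S^{-1}_L(p,A)=(p\mathcal{I}-\bar A)\mathcal{Q}_{c,p}(A)^{-1}$. Using $\bar A=(\bar T-\alpha\mathcal{I})^{-1}$ together with $M=(T-\alpha\mathcal{I})(\bar T-\alpha\mathcal{I})$, one computes $(p\mathcal{I}-\bar A)M=pM-(T-\alpha\mathcal{I})$, so that
$$S^{-1}_L(p,A)=p^{-2}\,[\,pM-(T-\alpha\mathcal{I})\,]\,\mathcal{Q}_{c,s}(T)^{-1}.$$
A final algebraic check, immediate from the definitions with $s=\alpha+p^{-1}$, gives $p[\mathcal{Q}_{c,s}(T)-M]=(2\alpha+p^{-1})\mathcal{I}-(T+\bar T)$, which allows one to rewrite the bracket as $p\,\mathcal{Q}_{c,s}(T)-(s\mathcal{I}-\bar T)$, and therefore
$$S^{-1}_L(p,A)=p^{-1}\mathcal{I}-(s\mathcal{I}-\bar T)\mathcal{Q}_{c,s}(T)^{-1}\,p^{-2}=p^{-1}\mathcal{I}-S^{-1}_L(s,T)\,p^{-2},$$
which is exactly \eqref{Sunbou}.

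The main obstacle I expect is the bookkeeping of domains, since $M$ is the product of two closed unbounded operators and the operators $T$, $\bar T$ are only defined on $\mathrm{dom}(T)$. All identities must be established first on the dense subspace $\mathrm{dom}(T\bar T)$, on which $\mathcal{Q}_{c,s}(T)^{-1}$ takes values, and then extended to all of $X$ by continuity using the boundedness of $M^{-1}$ and of $\mathcal{Q}_{c,p}(A)^{-1}$; care is also needed to verify that the various products $(\bar T-\alpha\mathcal{I})^{-1}(T-\alpha\mathcal{I})$ and $(T-\alpha\mathcal{I})(\bar T-\alpha\mathcal{I})^{-1}$ coincide on appropriate invariant subspaces, a step which ultimately relies on the commutativity of the components $T_\ell$.
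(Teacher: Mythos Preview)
Your proposal is correct. The paper does not actually prove this theorem; it simply cites \cite[Thm.~6.12]{CS}. Your derivation is the standard one and is entirely consistent with the techniques the paper uses elsewhere: your identity $\mathcal{Q}_{c,p}(A)^{-1}=p^{-2}M\,\mathcal{Q}_{c,s}(T)^{-1}$ is precisely Theorem~\ref{protQAQT} (with $M=(A\bar A)^{-1}$), and your key algebraic step $(p\mathcal{I}-\bar A)M=p\,\mathcal{Q}_{c,s}(T)-(s\mathcal{I}-\bar T)$ appears verbatim as equation~\eqref{eq1} in the proof of Theorem~\ref{t1}. Your caution about domains is also well placed and mirrors the care taken in those proofs.
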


Now, we recall few properties of the $S$-functional calculus for unbounded operators, see \cite[Theorem 6.14]{CS}.
\begin{theorem}\label{proper}
Let $T \in \mathcal{KC}(X)$ with $\rho_S(T) \cap \mathbb{R} \neq \emptyset$.
\begin{itemize}
\item If $f$, $g \in \mathcal{SH}_L(\overline{\sigma}_S(T))$ and $a \in \mathbb{H}$,  then
$$ (fa+g)(T)=f(T)a+g(T),$$
\item If $f \in \mathcal{N}(\overline{\sigma}_S(T))$ and $g \in \mathcal{SH}_L(\overline{\sigma}_S(T))$ or $f \in \mathcal{SH}_R(\overline{\sigma}_S(T))$ and $g \in \mathcal{N}(\overline{\sigma}_S(T)$, then
$$ (fg)(T)=f(T)g(T).$$
\item If $s$, $q \in \rho_S(T)$ with $s \notin [q]$ then
\begin{eqnarray*}
S^{-1}_R(s,T)S^{-1}_L(q,T)&=&[\left(S^{-1}_R(s,T)-S^{-1}_L(q,T)\right)q\\
&&- \bar{s}\left(S^{-1}_R(s,T)-S^{-1}_L(q,T)\right)](q^2-2s_0q+|s|^2)^{-1}.
\end{eqnarray*}
\end{itemize}
\end{theorem}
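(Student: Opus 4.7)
The strategy is to reduce all three assertions to their already-established analogues for the bounded commutative $S$-functional calculus applied to the auxiliary operator $A := (T - \alpha\id)^{-1}$, with $\alpha \in \rho_S(T) \cap \rr$ fixed. The reduction is governed by the one-to-one correspondence $f \mapsto f \circ \phi_\alpha^{-1}$ of Proposition \ref{simaAT} and by the defining identity $f(T) = (f \circ \phi_\alpha^{-1})(A)$.

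For (i) and (ii) I exploit the compatibility of $f \mapsto f \circ \phi_\alpha^{-1}$ with the algebraic operations. Setting $\phi := f \circ \phi_\alpha^{-1}$ and $\psi := g \circ \phi_\alpha^{-1}$, one has $(fa + g) \circ \phi_\alpha^{-1} = \phi a + \psi$ by pointwise right $\hh$-linearity, and so linearity of the bounded calculus gives $(fa + g)(T) = (\phi a + \psi)(A) = \phi(A) a + \psi(A) = f(T) a + g(T)$. For the product rule, since $\alpha \in \rr$ the M\"obius map $\phi_\alpha$ and its inverse are intrinsic, so composition with $\phi_\alpha^{-1}$ preserves the classes $\mathcal{N}$, $\mathcal{SH}_L$, $\mathcal{SH}_R$; moreover pointwise multiplication commutes with composition, $(fg) \circ \phi_\alpha^{-1} = \phi \psi$. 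Hence one of $\phi$, $\psi$ lies in $\mathcal{N}(\sigma_S(A))$, and the bounded product rule (which applies whenever one factor is intrinsic) gives $(fg)(T) = (\phi \psi)(A) = \phi(A) \psi(A) = f(T) g(T)$.

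For part (iii), rather than transferring through $A$, I give a direct algebraic proof. For $s, q \in \rho_S(T)$ with $s \notin [q]$, both $\mathcal{Q}_{c,s}(T)^{-1}$ and $\mathcal{Q}_{c,q}(T)^{-1}$ are bounded operators on $X$ with range contained in $\fdom(T\bar{T})$; hence $S^{-1}_L(q,T)$ and $S^{-1}_R(s,T)$ are bounded on $X$, their product is well defined, and the scalar $q^2 - 2 s_0 q + |s|^2$ is invertible. The claimed identity is equivalent to the operator equation
\begin{equation*}
S^{-1}_R(s,T) S^{-1}_L(q,T)(q^2 - 2 s_0 q + |s|^2) = \bigl[S^{-1}_R(s,T) - S^{-1}_L(q,T)\bigr] q - \bar{s} \bigl[S^{-1}_R(s,T) - S^{-1}_L(q,T)\bigr],
\end{equation*}
which, after inserting $S^{-1}_L(q,T) = (q\id - \bar{T}) \mathcal{Q}_{c,q}(T)^{-1}$ and $S^{-1}_R(s,T) = \mathcal{Q}_{c,s}(T)^{-1} s - \bar{T} \mathcal{Q}_{c,s}(T)^{-1}$ and repeatedly invoking the definitions of $\mathcal{Q}_{c,s}(T)$, $\mathcal{Q}_{c,q}(T)$ together with the commutativity $T\bar{T} = \bar{T}T$, collapses on both sides to the same expression, exactly as in the bounded commutative case treated in \cite{CGK}.

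The main obstacle is the bookkeeping of domains in (iii): at every step one must verify that any vector to which an unbounded operator is applied actually lies in the relevant domain. This is guaranteed because $\ran(\mathcal{Q}_{c,\cdot}(T)^{-1}) \subset \fdom(T\bar{T}) \subset \fdom(T) \cap \fdom(\bar{T})$, so every intermediate expression factors through $\fdom(T\bar{T})$, where $T$ and $\bar{T}$ commute and can be manipulated as in the bounded case. Once this verification is carried out, the proof collapses to the pure algebra of the bounded setting and no further input is needed.
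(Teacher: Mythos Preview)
The paper does not give its own proof of this theorem: it is stated as a recalled result with a pointer to \cite[Theorem 6.14]{CS}. Your argument is correct and is precisely the standard one that underlies that reference---parts (i) and (ii) follow immediately from the defining identity $f(T)=(f\circ\phi_\alpha^{-1})(A)$ together with the bounded commutative $S$-functional calculus for $A$, and part (iii) is the same algebraic resolvent identity as in the bounded case, the only extra content being the domain bookkeeping you outline.
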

The following result puts in relation the commutative pseudo S-resolvent of $T$ with the one of $A$, see \cite[Theorem 8.1.1]{CGK}.
\begin{theorem}
\label{protQAQT}
Let $T\in\mathcal{KC}(X)$ and assume that there exists a point $\alpha\in \rho_{S}(T)\cap \mathbb{R}\not=\emptyset$ and set  $A:=(T-\alpha\id)^{-1}$. For $p =\phi_{\alpha}(s)=(s - \alpha)^{-1}$, we have
\[
\begin{split}
\Q_{c,p}(A)^{-1}& = \left( A\overline{A} \right)^{-1}\Q_{c,s}(T)^{-1}p^{-2}
\end{split}
\]
and
\[
\Q_{c,p}(A)^{-2} = \left( A\overline{A} \right)^{-2}\Q_{c,s}(T)^{-2}p^{-4}.
\]
\end{theorem}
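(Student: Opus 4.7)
The plan is to rewrite $\mathcal{Q}_{c,p}(A)$ directly in the factored form $p^{2}\,\mathcal{Q}_{c,s}(T)\,(A\overline{A})$ and then invert and square. The key structural observation is that, since $T$ has commuting components, the operators $T\overline{T}=T_{0}^{2}+T_{1}^{2}+T_{2}^{2}+T_{3}^{2}$ and $T+\overline{T}=2T_{0}$ are \emph{real} linear operators, and so commute with every left quaternionic scalar. From the factorisation $(T-\alpha\mathcal{I})(\overline{T}-\alpha\mathcal{I})=\mathcal{Q}_{c,\alpha}(T)$ together with $\alpha\in\mathbb{R}$ it follows that
\[
A\overline{A}=\mathcal{Q}_{c,\alpha}(T)^{-1}=\bigl((T_{0}-\alpha)^{2}+T_{1}^{2}+T_{2}^{2}+T_{3}^{2}\bigr)^{-1}
\]
is itself a real operator. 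Moreover, since $p=(s-\alpha)^{-1}$ lies in the complex plane $\mathbb{C}_{J_{s}}$ generated by $s$, the quaternion $p$ commutes with $s$ and $s^{2}$, and through them with $\mathcal{Q}_{c,s}(T)$ and its inverse.

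Given these commutations, I would first use $A+\overline{A}=(T+\overline{T}-2\alpha)A\overline{A}=2(T_{0}-\alpha)A\overline{A}$ to factor $A\overline{A}$ out on the right:
\[
\mathcal{Q}_{c,p}(A)=p^{2}\mathcal{I}-2p(T_{0}-\alpha)A\overline{A}+A\overline{A}=\bigl[p^{2}(A\overline{A})^{-1}-2p(T_{0}-\alpha)+\mathcal{I}\bigr]A\overline{A}.
\]
Multiplying the bracket on the right by $p^{-2}$ and using $p^{-1}=s-\alpha$, the result becomes
\[
(T_{0}-\alpha)^{2}+\sum_{i=1}^{3}T_{i}^{2}-2(T_{0}-\alpha)(s-\alpha)+(s-\alpha)^{2}=(T_{0}-s)^{2}+\sum_{i=1}^{3}T_{i}^{2}=\mathcal{Q}_{c,s}(T),
\]
where the quadratic completes cleanly because the real operator $T_{0}$ commutes with the scalar $s$. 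Hence the bracket equals $\mathcal{Q}_{c,s}(T)\,p^{2}$, and so
\[
\mathcal{Q}_{c,p}(A)=p^{2}\,\mathcal{Q}_{c,s}(T)\,A\overline{A}.
\]
Inverting both sides gives the first claimed identity.

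The second identity then follows simply by squaring the first, upon noting that $p^{-2}$ commutes with both $(A\overline{A})^{-1}$ and $\mathcal{Q}_{c,s}(T)^{-1}$, so no obstruction arises in computing $\bigl[(A\overline{A})^{-1}\mathcal{Q}_{c,s}(T)^{-1}p^{-2}\bigr]^{2}$. The main obstacle throughout is bookkeeping of quaternionic non-commutativity; once one isolates the ``commutative core'' generated by the real operators $T_{0},T_{1},T_{2},T_{3}$ together with the scalars in $\mathbb{C}_{J_{s}}$, every reordering of factors becomes legitimate and the argument reduces to the elementary algebraic identity $(T_{0}-s)^{2}=((T_{0}-\alpha)-(s-\alpha))^{2}$.
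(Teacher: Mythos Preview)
Your proof is correct. The paper does not actually prove this theorem in-text; it simply records the statement and cites \cite[Theorem~8.1.1]{CGK} for the proof. Your self-contained argument---factoring out $A\overline{A}$, recognising $(A\overline{A})^{-1}=(T_0-\alpha)^2+\sum_i T_i^2$, and completing the square via $(T_0-s)^2=((T_0-\alpha)-(s-\alpha))^2$---is clean and exploits exactly the commutativity structure the paper relies on elsewhere (real operators commuting with all quaternionic scalars, and $p,s\in\mathbb{C}_{J_s}$ commuting with one another). The only point worth tightening for an unbounded-operator audience is the inversion step: you should remark that the composition $(A\overline{A})^{-1}\mathcal{Q}_{c,s}(T)^{-1}$ is everywhere defined and bounded because $\mathcal{Q}_{c,s}(T)^{-1}$ maps $X$ into $\mathrm{dom}(T\overline{T})=\mathrm{dom}((A\overline{A})^{-1})$, so that the right-hand side of the first identity is indeed a bounded operator on $X$. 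With that caveat made explicit, the argument is complete.
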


Now, we revise the $F$-functional calculus for unbounded operators.

\begin{definition}\label{UNBFRES}
Let $T \in \mathcal{KC}(X)$. For $s \in \rho_S(T)$. We define the left (resp. right) $F$-resolvent operators as
$$ F_L(s,T)= -4(s \mathcal{I}- \bar{T}) \mathcal{Q}_{c,s}(T)^{- 2}, \quad \left( \hbox{resp.} \quad  F_R(s,T)= -4 \mathcal{Q}_{c,s}(T)^{- 2}s- \bar{T}\mathcal{Q}_{c,s}(T)^{- 2} \right).$$
\end{definition}
The $F$-functional calculus for unbounded operators considered in this paper
will be based on the following relation between
the $F$-resolvent operators $F_L(p,A)$ and $F_{L}(s,T)$.
The difference with the definition in \cite{CSF} is in the powers of the operator $A \bar{A}$.
\begin{theorem}
Let $T\in \mathcal{KC}(X)$, let $ \alpha \in \rho_S(T) \cap \mathbb{R} \neq \emptyset$ and define $A=(T- \alpha \mathcal{I})^{-1}$. For $s \in \rho_S(T)$ and $p=\phi_{\alpha}(s)=(s- \alpha)^{-1}$, we have
\begin{equation}\label{NEWFAP}
(A \bar{A})F_L(p,A) p^4v=-4 p \mathcal{Q}_{c,s}(T)^{-1}v-F_{L}(s,T)v, \ \ \ v\in X.
\end{equation}
\end{theorem}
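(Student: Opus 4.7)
The plan is to use Theorem \ref{protQAQT} to eliminate $\mathcal{Q}_{c,p}(A)^{-2}$ in favour of $\mathcal{Q}_{c,s}(T)^{-2}$, and then verify the resulting identity by purely algebraic manipulation in $T,\bar{T},s,\alpha,p$. The crucial structural fact, which comes from the commuting-components hypothesis, is that $A\bar{A}=[(T-\alpha)(\bar{T}-\alpha)]^{-1}$ and $\mathcal{Q}_{c,s}(T)=s^{2}\mathcal{I}-s(T+\bar{T})+T\bar{T}$ are built only out of the \emph{real} operators $T_{0}$ and $T\bar{T}$ together with the quaternion scalar $s$. Hence left multiplication by $p=(s-\alpha)^{-1}$, which commutes with $s$ and with every real operator, commutes with both $A\bar{A}$ and $\mathcal{Q}_{c,s}(T)$; however, $p$ does \emph{not} commute with $\bar{T}$ nor with $\bar{A}=(\bar{T}-\alpha)^{-1}$, and keeping these on the correct side throughout the computation is the main technical nuisance.

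\textbf{Reduction to a $T$-expression.} First, substituting the Theorem \ref{protQAQT} identity into $F_{L}(p,A)=-4(p\mathcal{I}-\bar{A})\mathcal{Q}_{c,p}(A)^{-2}$, applying to $p^{4}v$ and cancelling the commuting scalars $p^{-4}p^{4}=1$, I obtain
\[
F_{L}(p,A)p^{4}v=-4(p\mathcal{I}-\bar{A})(A\bar{A})^{-2}\mathcal{Q}_{c,s}(T)^{-2}v.
\]
Second, multiplying by $A\bar{A}$ on the left and using that it is real (hence commutes with $p\mathcal{I}-\bar{A}$ and with $\mathcal{Q}_{c,s}(T)^{-2}$), the expression collapses to $-4(p\mathcal{I}-\bar{A})(A\bar{A})^{-1}\mathcal{Q}_{c,s}(T)^{-2}v$. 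Third, writing $(A\bar{A})^{-1}=(T-\alpha)(\bar{T}-\alpha)$ and using that $\bar{A}$ commutes with $T-\alpha$ together with $\bar{A}(\bar{T}-\alpha)=\mathcal{I}$, the operator coefficient becomes
\[
(p\mathcal{I}-\bar{A})(A\bar{A})^{-1}=p(T-\alpha)(\bar{T}-\alpha)-(T-\alpha).
\]

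\textbf{Key algebraic identity and conclusion.} The algebraic heart of the proof is the quadratic identity
\[
(T-\alpha)(\bar{T}-\alpha)-\mathcal{Q}_{c,s}(T)=(s-\alpha)\bigl(T+\bar{T}-s-\alpha\bigr),
\]
which follows by direct expansion (with $T\bar{T}$ cancelling in both terms). Multiplying it on the right by $\mathcal{Q}_{c,s}(T)^{-2}v$ and on the left by $p$, and using the quaternionic identity $p(s-\alpha)=1$, one obtains
\[
p(T-\alpha)(\bar{T}-\alpha)\mathcal{Q}_{c,s}(T)^{-2}v=p\,\mathcal{Q}_{c,s}(T)^{-1}v+(T+\bar{T}-s-\alpha)\mathcal{Q}_{c,s}(T)^{-2}v.
\]
Substituting this back, the two remaining operator coefficients of $\mathcal{Q}_{c,s}(T)^{-2}v$ combine as $-(T+\bar{T}-s-\alpha)+(T-\alpha)=s\mathcal{I}-\bar{T}$, which is exactly the numerator of $F_{L}(s,T)$. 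One therefore ends up with $-4p\mathcal{Q}_{c,s}(T)^{-1}v+4(s\mathcal{I}-\bar{T})\mathcal{Q}_{c,s}(T)^{-2}v=-4p\mathcal{Q}_{c,s}(T)^{-1}v-F_{L}(s,T)v$, which is exactly \eqref{NEWFAP}. The delicate point, which I would flag explicitly at each step, is to verify that $p$ is only ever commuted past objects built from $s,\alpha,T_{0},T\bar{T}$ and never past $\bar{T}$ or $\bar{A}$; once this bookkeeping is in order, the whole argument is driven by the quadratic identity above.
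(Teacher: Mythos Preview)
Your argument is correct and the algebra is clean; the quadratic identity $(T-\alpha)(\bar T-\alpha)-\mathcal Q_{c,s}(T)=(s-\alpha)(T+\bar T-s-\alpha)$ together with $p(s-\alpha)=1$ does exactly the job.

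The route, however, differs from the paper's. The paper does not work from the raw definition $F_L(p,A)=-4(p\mathcal I-\bar A)\mathcal Q_{c,p}(A)^{-2}$; instead it writes $F_L(p,A)=-4\,S_L^{-1}(p,A)\,\mathcal Q_{c,p}(A)^{-1}$ and then substitutes the already-established relation \eqref{Sunbou}, $S_L^{-1}(p,A)=p^{-1}\mathcal I-S_L^{-1}(s,T)p^{-2}$, together with the first-power form of Theorem~\ref{protQAQT}. After commuting $(A\bar A)^{-1}$ past $p$ and past $S_L^{-1}(s,T)$, the result drops out immediately because $-4S_L^{-1}(s,T)\mathcal Q_{c,s}(T)^{-1}=F_L(s,T)$. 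So the paper trades your explicit quadratic identity for a black-box appeal to the $S$-resolvent relation, which makes the computation shorter but less self-contained. Amusingly, your computation of $(p\mathcal I-\bar A)(A\bar A)^{-1}=p(T-\alpha)(\bar T-\alpha)-(T-\alpha)$ is precisely what the paper carries out later, in the proof of Theorem~\ref{t1}, where it is recorded in the equivalent form $p\,\mathcal Q_{c,s}(T)-(s\mathcal I-\bar T)$; so your strategy here is essentially the one the authors themselves adopt for the $P_2$-resolvent.

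One point you gloss over and the paper does not: the domain bookkeeping for unbounded $T$. You should state explicitly that $\mathcal Q_{c,s}(T)^{-2}$ maps $X$ into $\mathrm{dom}((T\bar T)^2)=\mathrm{dom}(T^4)$, so that applying $(A\bar A)^{-1}=(T-\alpha)(\bar T-\alpha)$ and then $(p\mathcal I-\bar A)$ on the left is legitimate for every $v\in X$, and that the final left-multiplication by the bounded operator $A\bar A$ is harmless. The paper tracks these ranges step by step; adding two sentences to that effect would make your proof complete.
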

\begin{proof}
We start by observing that we can write the $F$-resolvent operator $F_L(p,A)$ as
$$ F_L(p,A)=-4 S^{-1}_L(p,A) \mathcal{Q}_{c,p}(A)^{-1}.$$
By using formula \eqref{Sunbou} and Theorem \ref{protQAQT} we get
$$ F_L(p,A)=-4 \left(p^{-1} \mathcal{I}-S^{-1}_L(s,T)p^{-2}\right) (A \bar{A})^{-1} \mathcal{Q}_{c,s}(T)^{-1}p^{-2}.$$
Thanks to the hypothesis on $T$ and consequently on $A$ the operator
$$
A\overline{A}=(\alpha ^2\mathcal{I}-\alpha (T+\overline{T})+T\overline{T})^{-1}: \ X\to {\rm dom}(T\overline{T})
$$
does not contain imaginary units as well as
$$
(A\overline{A})^{-1}=\alpha ^2\mathcal{I}-\alpha (T+\overline{T})+T\overline{T}: \ {\rm dom}(T\overline{T})\to X.
$$
This implies that we can commute $(A\overline{A})^{-1}$ with $p^{-1}$ and its powers since $p=(s- \alpha)^{-1}$.
Moreover, $(A\overline{A})^{-1}$ commutes also with the
commutative version of the $S$-resolvent
$$
 S^{-1}_L(s,T)=(s \mathcal{I}- \bar{T}) \mathcal{Q}_{c,s}(T)^{-1}.
 $$
Regarding the domains of the operators we recall that
 $$
 \mathcal{Q}_{c,s}(T)^{-1}: X\to \hbox{dom}(T \bar{T})
 $$
and
$$
S^{-1}_L(s,T): X\to \hbox{dom}(T)
$$
 so we have
$$
 S^{-1}_L(s,T)p^{-2} (A \bar{A})^{-1} \mathcal{Q}_{c,s}(T)^{-1}p^{-2}v=(A \bar{A})^{-1}S^{-1}_L(s,T)p^{-2}  \mathcal{Q}_{c,s}(T)^{-1}p^{-2}v $$
 for all $v\in X$ and for all  for $s\in \rho_S(T)$.
Observe now that we obtain
\begin{eqnarray*}
F_{L}(p,A)v&=&(A \bar{A})^{-1}[-4 \mathcal{Q}_{c,s}(T)^{-1}p^{-3}+4 S^{-1}_L(s,T) \mathcal{Q}_{c,s}(T)^{-1}p^{-4}]v\\
&=& (A \bar{A})^{-1} \left(-4 \mathcal{Q}_{c,s}(T)^{-1}p^{-3}-F_L(s,T)p^{-4}\right)v
\end{eqnarray*}
and the relation holds for $v\in X$. This fact is justified
by observing that
$$
\mathcal{Q}_{c,s}(T)^{-2}: X\to {\rm dom}((T\bar{T})^2)
$$
where $ {\rm dom}((T\bar{T})^2)={\rm dom}(T^4)$,
so the range of $-4\mathcal{Q}_{c,s}(T)^{-1}p^{-3}-F_L(s,T)p^{-4}$
 is contained in the domain of $(A \bar{A})^{-1}$.
 Moreover, the range of $F_{L}(p,A)$ in contained in the domain of $A \bar{A}$.
 With these observations
we can multiplying on the left-hand side by $A \bar{A}$ and on the right-hand side by $p^{4} \mathcal{I}$ to get the statement.
\end{proof}
Formula (\ref{NEWFAP}) is suitable to define the $F$-functional calculus for unbounded operators.
It is worth noticing that the above formula gives a definition for the $F$-functional calculus, slightly different from the customary definition used in the literature, see \cite{CGK}.

Keeping in mind the definition of $\phi_\alpha$ and its property, see Definition \ref{phialp} and Proposition \ref{simaAT}, we are in the position to define the $F$-functional calculus for unbounded operators.
\begin{definition}[$F$-functional calculus for unbounded operators]
Let $T \in \mathcal{KC}(X)$ and
 assume that the operators $T_{\ell}$, $ \ell=0,1,2,3$ have real spectrum,
  where one of the $T_{\ell}$ is the zero operator.
  Suppose that $\rho_S(T) \cap \mathbb{R} \neq \emptyset$, let $ \alpha \in \rho_S(T) \cap \mathbb{R}$, define $ \phi_{\alpha}$ as in \eqref{1first} and set $A:=(T- \alpha \mathcal{I})^{-1}$. For $f \in \mathcal{SH}_L(\overline{\sigma}_S(T))$ with $f(\alpha)=0$, we consider the functions
$$ \phi(q)= (f \circ \phi_{\alpha}^{-1})(q),$$
$$ \breve{\phi}(q)= \Delta (q^2 \phi(q))$$
and we define the operator $\breve{f}(T)$, for $ \breve{f}=\Delta f$, as
\begin{equation}
\label{Ffun}
\breve{f}(T):=(A \bar{A}) \breve{\phi}(A),
\end{equation}
where $ \breve{\phi}(A)$ is defined by means of the bounded $F$-functional calculus.
A similar definition holds for $f \in \mathcal{SH}_R(\overline{\sigma}_S(T))$ with $f(\alpha)=0$.
\end{definition}

\begin{theorem}
Let $T \in \mathcal{KC}(X)$ and
 assume that the operators $T_{\ell}$, $ \ell=0,1,2,3$ have real spectrum,
  where one of the $T_{\ell}$ is the zero operator.
Suppose that $\rho_S(T) \cap \mathbb{R} \neq \emptyset$, let $ \alpha \in \rho_S(T) \cap \mathbb{R}$, define $\phi_{\alpha}$ as in \eqref{1first} and set $A:=(T-\alpha \mathcal{I})^{-1}$. For $ \breve{f}= \Delta f$ with $f \in \mathcal{SH}_L(\overline{\sigma}_S(T))$ and $f(\alpha)=0$, the operator $ \breve{f}(T)$ defined in \eqref{Ffun} satisfies
$$ \breve{f}(T)= \frac{1}{2 \pi} \int_{\partial(U \cap \mathbb{C}_J)} F_L(s,T) ds_J f(s),$$
where $U$ is any unbounded slice Cauchy domain with $\sigma_S(T) \subset U$ and $ \overline{U} \subset \hbox{dom}(f)$ and $J$ is any imaginary unit $J \in \mathbb{S}$.
A similar integral representation holds for $f \in \mathcal{SH}_R(\overline{\sigma}_S(T))$ with $f(\alpha)=0$.
\end{theorem}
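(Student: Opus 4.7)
\emph{Overall strategy.} The plan is to unfold the definition $\breve{f}(T)=(A\bar{A})\breve{\phi}(A)$ using the bounded $F$-functional calculus applied to $A=(T-\alpha\id)^{-1}$, then convert the resulting integral over $\partial(V\cap\cc_{J})$ into one over $\partial(U\cap\cc_{J})$ via the change of variable $p=\phi_{\alpha}(s)=(s-\alpha)^{-1}$, using the key relation \eqref{NEWFAP} to trade $F_{L}(p,A)$ for operators at $s$. A correction term involving $\mathcal{Q}_{c,s}(T)^{-1}(s-\alpha)^{-1}$ will appear, which must be shown to vanish; this is the main obstacle and is precisely where the hypothesis $f(\alpha)=0$ enters.

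\emph{Main computation.} Since $\phi(0)=f(\infty)$ is finite, the function $g(q):=q^{2}\phi(q)$ is left slice hyperholomorphic on $\phi_{\alpha}(\dom(f))$ with $g(0)=0$, and it satisfies $\Delta g=\breve{\phi}$. Choose $U$ as in the statement, normalized so that $\alpha\notin\overline{U}$ (possible, since $\alpha$ is a single point of $\rho_{S}(T)\cap\mathbb{R}$); then $V:=\phi_{\alpha}(U)$ is a bounded slice Cauchy domain containing $\sigma_{S}(A)$ by Proposition \ref{simaAT} and the conformality of $\phi_{\alpha}$ on each slice $\cc_{J}$. The bounded $F$-functional calculus for $A$ gives
$$
\breve{\phi}(A)=\frac{1}{2\pi}\int_{\partial(V\cap\cc_{J})}F_{L}(p,A)\,dp_{J}\,p^{2}\phi(p).
$$
Multiplying on the left by $A\bar{A}$ and substituting
$$
(A\bar{A})F_{L}(p,A)=-4\,\mathcal{Q}_{c,s}(T)^{-1}p^{-3}-F_{L}(s,T)\,p^{-4},
$$
which follows from \eqref{NEWFAP} after observing that $p\in\cc_{J}$ commutes with $\mathcal{Q}_{c,s}(T)^{-1}$ (the latter being a real-polynomial combination of $s$, $T_{0}$ and $T\bar{T}$), and then performing the change of variable with $dp_{J}=-p^{2}\,ds_{J}$ (where $p^{2}\in\cc_{J}$ commutes past $ds_{J}$), the powers of $p$ collapse and one obtains
\begin{equation*}
\breve{f}(T)=\frac{1}{2\pi}\int_{\partial(U\cap\cc_{J})}F_{L}(s,T)\,ds_{J}\,f(s)+\frac{2}{\pi}\int_{\partial(U\cap\cc_{J})}\mathcal{Q}_{c,s}(T)^{-1}(s-\alpha)^{-1}\,ds_{J}\,f(s).
\end{equation*}

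\emph{Vanishing of the extra term and conclusion.} The second integrand $s\mapsto\mathcal{Q}_{c,s}(T)^{-1}(s-\alpha)^{-1}f(s)$ is left slice hyperholomorphic on all of $\rho_{S}(T)$: the apparent simple pole at $s=\alpha$ is removable because $f(\alpha)=0$, so $(s-\alpha)^{-1}f(s)$ extends slice hyperholomorphically across $\alpha$. Moreover, as $|s|\to\infty$ one has $\|\mathcal{Q}_{c,s}(T)^{-1}\|=O(|s|^{-2})$, $|(s-\alpha)^{-1}|=O(|s|^{-1})$, and $f(s)\to f(\infty)$, so the integrand is $O(|s|^{-3})$. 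Truncating $U$ by a large ball $B_{R}(0)$ and applying Cauchy's theorem on the bounded slice Cauchy domain $U\cap B_{R}(0)$, the integral equals the contribution on $|s|=R$ inside $\cc_{J}$, which tends to $0$ as $R\to\infty$ by the decay estimate. This proves the asserted formula. Independence of the admissible unbounded slice Cauchy domain $U$ (including those for which $\alpha\in U$) and of $J\in\SS$ then follows from the analogous slice Cauchy argument applied to $F_{L}(s,T)f(s)$, which is itself left slice hyperholomorphic on $\rho_{S}(T)\cap\dom(f)$ with $O(|s|^{-3})$ decay. The right-sided case is proved by symmetric reasoning. The delicate step is the vanishing of the correction integral; once $f(\alpha)=0$ is invoked to remove the pole at $\alpha$ and the decay estimate is in place, the remainder of the argument is bookkeeping with the substitution $p=(s-\alpha)^{-1}$.
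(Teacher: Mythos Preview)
Your overall strategy—unfolding $\breve f(T)=(A\bar A)\breve\phi(A)$, substituting the identity \eqref{NEWFAP}, and changing variable $p=\phi_\alpha(s)$—is exactly the route the paper takes (it cites \cite[Thm.~8.2.3]{CGK}, and the parallel $P_2$-proof in Section~4 spells out the same mechanism). The main computation producing the correction term $\frac{2}{\pi}\int_{\partial(U\cap\cc_J)}\mathcal Q_{c,s}(T)^{-1}(s-\alpha)^{-1}\,ds_J\,f(s)$ is right.

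The gap is in how you dispose of this extra term. Two points fail:
\begin{itemize}
\item The claim that Cauchy's theorem applies on $U\cap B_R(0)$ is incorrect: the integrand $\mathcal Q_{c,s}(T)^{-1}(s-\alpha)^{-1}f(s)$ has singularities precisely on $\sigma_S(T)$, which sits \emph{inside} $U$ (and hence inside $U\cap B_R$). You cannot push the contour $\partial U$ outward through $U$ toward $\partial B_R$.
\item The decay estimate $\|\mathcal Q_{c,s}(T)^{-1}\|=O(|s|^{-2})$ as $|s|\to\infty$ is generally false for $T\in\mathcal{KC}(X)$: the $S$-spectrum of a closed unbounded operator can be unbounded, so $\mathcal Q_{c,s}(T)^{-1}$ need not even be defined for large $|s|$. (This estimate is a bounded-operator reflex.)
\end{itemize}

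The correct manoeuvre goes the other direction: push toward the \emph{bounded} complement $\hh\setminus U$, which lies in $\rho_S(T)$ and contains $\alpha$. Since $\mathcal Q_{c,s}(T)^{-1}$ is intrinsic and commutes with $ds_J$ and $(s-\alpha)^{-1}$, rewrite the correction term as
\[
\int_{\partial(U\cap\cc_J)} S_L^{-1}(s,\alpha)\,ds_J\,\bigl[\mathcal Q_{c,s}(T)^{-1}f(s)\bigr],
\]
and apply the vector-valued Cauchy formula on the bounded region $\hh\setminus U$ (with the orientation of $\partial U$ reversed). This yields a constant multiple of $\mathcal Q_{c,\alpha}(T)^{-1}f(\alpha)$, which vanishes because $f(\alpha)=0$. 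This is exactly the computation carried out in the paper's $P_2$-case (see formulas~\eqref{f2}–\eqref{f3} and the paragraph that follows).
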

\begin{proof}
Taking into account the relation (\ref{NEWFAP}) the proof follows exactly the same lines of \cite[Thm 8.2.3]{CGK}.
\end{proof}
We observe that, like what happens in the $S$-functional calculus for unbounded operators, even the $F$-functional calculus for unbounded operators does not depend on the parameter $ \alpha$, and the considerations of Remark \ref{RMKDELKER} have to be done in the unbounded case as well.


\section{Harmonic functional calculus for unbounded operators}\label{SECTHARFC}

It is well known that we can factorize the Laplace operator in terms of the Cauchy-Fueter operator $\mathcal{D}$ and we have
$ \Delta=   \mathcal{D} \mathcal{\overline{D}}=  \mathcal{\overline{D}} \mathcal{D}$.
When we apply the operator $ \mathcal{D}$ or its conjugate $ \mathcal{\overline{D}}$ to a slice hyperholomorphic function we obtain different classes of functions. In this section we consider the harmonic fine structure in the sense of Definition \ref{DEFFINE}, see also \cite{CDPS1}.
\begin{definition}[Axially harmonic functions]\label{DEFAXARM}
Let $\Omega \subset \mathbb{R}^4$ be an axially symmetric domain.
A function $f: \Omega \to \mathbb{H}$, with  $f \in \mathcal{C}^2(\Omega)$, is said to be axially harmonic if
$$ \Delta f(q)= \sum_{i=0}^3 \partial_{q_i}^2 f(q)=0$$
and if it has the form \eqref{axial} where the functions $A$ and $B$ satisfy the even-odd conditions \eqref{co1}.
We will denote this function space with the symbol $\mathcal{AH}(\Omega)$.
\end{definition}

When we apply the operator $ \mathcal{D}$  to a slice hyperholomorphic function $f(q)$,  as a direct consequence of the Fueter theorem we get that
$$ \Delta \left(\mathcal{D}f(q)\right)=0.$$
This means that the function $ \mathcal{D}f(q)$ is axially harmonic  in the sense of Definition \ref{DEFAXARM} and
therefore the function spaces of the harmonic fine structure are given by the diagram

\begin{equation}
\label{facto1}
\begin{CD}
	\textcolor{black}{\mathcal{O}(D)}  @>T_{F1}>> \textcolor{black}{\mathcal{SH}(\Omega_D)} @>\mathcal{D}>> \textcolor{black}{\mathcal{AH}(\Omega_D)}  @>\ \    \mathcal{\overline{D}} >>\textcolor{black}{\mathcal{AM}(\Omega_D)},
\end{CD}
\end{equation}
The aim of this section is to define the unbounded functional calculus for the fine structure \eqref{facto1}.
Before we need to recall the definition of the harmonic functional calculus (or $Q$-functional calculus) for bounded
operators based on the $S$-spectrum, see \cite{CDPS1}.

\begin{definition}
\label{Qfun}
Let $T \in \mathcal{BC}(X)$,
assume that the operators $T_{\ell}$, $ \ell=0,1,2,3$ have real spectrum,
  where one of the $T_{\ell}$ is the zero operator
and set $ds_J=ds(-J)$ for $J \in \mathbb{S}$. For every function $ \tilde{f}= \mathcal{D}f$ (resp. $ \tilde{f}= f \mathcal{D}$) with $f \in \mathcal{SH}_L(\sigma_S(T))$ (resp. $f \in \mathcal{SH}_R(\sigma_S(T))$), we set
$$ \tilde{f}(T)=- \frac{1}{\pi} \int_{\partial(U \cap \mathbb{C}_J)} \mathcal{Q}_{c,s}(T)^{-1} ds_J f(s), \quad \left(\hbox{resp.} \quad \tilde{f}(T)=- \frac{1}{\pi} \int_{\partial(U \cap \mathbb{C}_J)}  f(s)ds_J\mathcal{Q}_{c,s}(T)^{-1} \right)$$
where $U$ is an arbitrary bounded slice Cauchy domain with $\sigma_S(T) \subset U$ and $ \overline{U} \subset \hbox{dom}(f)$ and $J \in \mathbb{S}$ is an arbitrary imaginary unit.
\end{definition}
\begin{remark}
The independence of the $Q$-functional calculus from the $ \hbox{ker}(\mathcal{D})$ is obtained by using the monogenic functional calculus of McIntosh. This needs that the operators $T_{\ell}$, $\ell=0,1,2,3$ have real spectrum and one of the $T_{\ell}$ has to be zero (see also the proof of Proposition \ref{invariance} for the case of the unbounded operators).
\end{remark}
The $Q$-functional calculus for left (resp. right) slice hyperholomorphic functions is quaternionic right (resp. left) linear. This means that if we consider $\tilde{f}= \mathcal{D}f$ and $\tilde{g}=\mathcal{D}g$ (resp. $\tilde{f}= f\mathcal{D}$ and $\tilde{g}=g\mathcal{D}$) with $f$, $g \in \mathcal{SH}_L(\sigma_S(T))$ (resp. $f$, $g \in \mathcal{SH}_R(\sigma_S(T))$  and $a \in \mathbb{H}$, for $T \in \mathcal{BC}(X)$ we have
\begin{equation}
\label{line1}
(\tilde{f}a+ \tilde{g})(T)=\tilde{f}(T)a+ \tilde{g}(T), \quad \left(\hbox{resp.} \quad (a\tilde{f}+ \tilde{g})(T)=a \tilde{f}(T)+ \tilde{g}(T) \right).
\end{equation}

In \cite{CDPS1} the $Q$-functional calculus is introduced to get a product rule for the $F$-functional calculus. Moreover, we study a resolvent equation for this functional calculus suitable to generate the Riesz projectors in this setting and to get the following product rule, see \cite{Polyf2}.

\begin{theorem}
Let $T \in \mathcal{BC}(X)$,
assume that the operators $T_{\ell}$, $ \ell=0,1,2,3$ have real spectrum,
  where one of the $T_{\ell}$ is the zero operator. We assume that $f \in \mathcal{N}(\sigma_S(T))$ and $g \in \mathcal{SH}_L(\sigma_S(T))$ or $f \in \mathcal{SH}_R(\sigma_S(T))$ and $g \in \mathcal{N}(\sigma_S(T))$, then
\begin{equation}
\label{prodrule}
\mathcal{D}(fg)(T)=f(T) (\mathcal{D}g)(T)+(\mathcal{D}f)(T)g(T)+ (\mathcal{D}f)(T) \underline{T} (\mathcal{D}g)(T),
\end{equation}
where $ \underline{T}:= T_1e_1+T_2e_2+T_3e_3$.
\end{theorem}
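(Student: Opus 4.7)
My plan is to express both sides as double integrals over slice Cauchy contours and then show that, although the two integrands differ pointwise, their difference vanishes upon integration thanks to the slice hyperholomorphicity of $f$ and $g$. First I fix bounded slice Cauchy domains $U$ and $V$ with $\sigma_S(T) \subset U$, $\overline{U} \subset V$, and $\overline{V} \subset \operatorname{dom}(f)\cap\operatorname{dom}(g)$, together with an imaginary unit $J \in \mathbb{S}$.

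On the left-hand side I start from
$\mathcal{D}(fg)(T) = -\frac{1}{\pi}\int_{\partial(U\cap \mathbb{C}_J)} \mathcal{Q}_{c,s}(T)^{-1}\, ds_J\, f(s) g(s)$
and apply the right slice Cauchy formula to replace $f(s)$, for $s \in U$, by $\frac{1}{2\pi}\int_{\partial(V\cap\mathbb{C}_J)} f(p)\,dp_J\,S_R^{-1}(p,s)$. The intrinsic condition $f \in \mathcal{N}$ gives $f(p)\in\mathbb{C}_J$, and together with $S_R^{-1}(p,s)\in\mathbb{C}_J$ these commute with $\mathcal{Q}_{c,s}(T)^{-1}$ and with $ds_J$; rearranging yields a double integral over $\partial V \times \partial U$ with operator kernel $K_L = S_R^{-1}(p,s)\mathcal{Q}_{c,s}(T)^{-1}$ and external factors $f(p)\,dp_J \cdots ds_J\,g(s)$. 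In parallel, using the right-slice representations of $f(T)$ and $(\mathcal{D}f)(T)$ (available for $f \in \mathcal{N}$) together with the left-slice representations of $g(T)$ and $(\mathcal{D}g)(T)$ from Definitions \ref{Sfun} and \ref{Qfun}, Fubini turns the three products on the right-hand side into a double integral of the same shape with kernel
$K_R = S_R^{-1}(p,T)\mathcal{Q}_{c,s}(T)^{-1} + \mathcal{Q}_{c,p}(T)^{-1}S_L^{-1}(s,T) - 2\mathcal{Q}_{c,p}(T)^{-1}\underline{T}\,\mathcal{Q}_{c,s}(T)^{-1}$.

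The theorem then reduces to the vanishing
$\int_{\partial V}\int_{\partial U} f(p)\,dp_J\,(K_L - K_R)\,ds_J\,g(s) = 0$.
To establish it I compute $K_L - K_R$ explicitly using: (i) the $S$-resolvent equation of Theorem \ref{proper}; (ii) the factorisation $\mathcal{Q}_{c,s}(T) = (s\mathcal{I}-T)(s\mathcal{I}-\overline{T}) = (s\mathcal{I}-\overline{T})(s\mathcal{I}-T)$, which is available precisely because the components $T_\ell$ commute; and (iii) the identity $T - \overline{T} = 2\underline{T}$. The outcome is an operator whose only singularity in $s$ sits outside $U$, so the inner $s$-integral against the left slice hyperholomorphic function $g$ vanishes by the slice version of Cauchy's theorem; a simple scalar check with $T = t\mathcal{I}$ confirms that the residual kernel is proportional to $(s-p)^{-1}$ times holomorphic data, exactly what is needed. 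The main obstacle is precisely this algebraic step: the cross-term $(\mathcal{D}f)(T)\underline{T}(\mathcal{D}g)(T)$ is invisible to a naive pointwise Leibniz expansion of $\mathcal{D}(fg)$ and appears only because it is exactly what is required to kill the residual $s$-singularities of $K_L$; uncovering and verifying this cancellation, while carefully tracking the orderings of $T$, $\overline{T}$ and the quaternion variables $p,s$, is the technical heart of the proof.
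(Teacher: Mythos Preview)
The paper does not prove this theorem in the text: it is quoted from \cite{Polyf2}, so there is no in-paper proof to compare against directly. Your outline---write both sides as double contour integrals with the same outer factors $f(p)\,dp_J\cdots ds_J\,g(s)$ and then show that the difference of kernels integrates to zero---is indeed the standard strategy and is the one used in the reference. Your reduction to the kernel identity, including the normalisation leading to $K_R = S_R^{-1}(p,T)\mathcal{Q}_{c,s}(T)^{-1} + \mathcal{Q}_{c,p}(T)^{-1}S_L^{-1}(s,T) - 2\mathcal{Q}_{c,p}(T)^{-1}\underline{T}\,\mathcal{Q}_{c,s}(T)^{-1}$, is set up correctly.

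However, the proposal has a genuine gap precisely at what you yourself call ``the technical heart of the proof''. You assert that $K_L-K_R$ has no singularity in $s$ inside $U$, but you do not carry out the algebra that shows this; you only verify it for $T=t\mathcal{I}$ real. In the general case the computation is not a formality: the variables $p,s$ do not commute with $T$, $\overline{T}$, or with each other across the imaginary units, and the cancellation of the $\sigma_S(T)$-poles in $s$ depends on a specific operator identity relating $S_R^{-1}(p,T)$, $S_L^{-1}(s,T)$, $\mathcal{Q}_{c,p}(T)^{-1}$, $\mathcal{Q}_{c,s}(T)^{-1}$ and the scalar kernel $S_R^{-1}(p,s)$. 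The $S$-resolvent equation you cite from Theorem~\ref{proper} involves two $S$-resolvents, not the mixed products $S_R^{-1}\cdot\mathcal{Q}^{-1}$ and $\mathcal{Q}^{-1}\cdot S_L^{-1}$ that appear here, so it is not by itself sufficient; an additional ``$Q$-resolvent equation'' (established in \cite{CDPS1,Polyf2}) is what actually drives the cancellation. Until you either derive that identity or reduce your $K_L-K_R$ explicitly to an expression that is left slice hyperholomorphic in $s$ on $\overline{U}$ (in the noncommutative setting, not just the scalar one), the argument is incomplete.
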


In order to introduce the harmonic functional calculus for unbounded operators
we need to use the homeomorphism $ \phi_{\alpha}$ introduced in Definition \ref{homo}
and its properties in Proposition \ref{simaAT}.

\begin{definition}[Harmonic functional calculus for unbounded operators]\label{HRUNFC}
Let $T \in \mathcal{KC}(X)$,
assume that the operators $T_{\ell}$, $ \ell=0,1,2,3$ have real spectrum,
  where one of the $T_{\ell}$ is the zero operator. Let $ \rho_S(T) \cap \mathbb{R} \neq \emptyset$ and suppose that $ \alpha \in \rho_S(T) \cap \mathbb{R}$.

   For $f \in \mathcal{SH}_L(\overline{\sigma}_S(T))$ we define the functions
$$ \phi(q):= (f \circ \phi_{\alpha}^{-1})(q),$$
$$ \widetilde{\psi}(q):= \mathcal{D}(\phi(q))$$
where $\mathcal{D}$ is the Cauchy-Fueter operator.
The harmonic functional calculus $ \tilde{f}(T)$
is defined as
\begin{equation}
\label{harmo1}
\tilde{f}(T):=(A \bar{A}) \tilde{\psi}(A),
\end{equation}
where $ \tilde{f}=\mathcal{D}f$.
A similar definition holds for $f \in \mathcal{SH}_R(\overline{\sigma}_S(T))$.
\end{definition}

We now use the relation between the two commutative pseudo $S$-resolvent $\mathcal{Q}_{c,p}(A)^{-1}$ and $\mathcal{Q}_{c,s}(T)^{-1}$
in order to provide to the harmonic functional calculus for unbounded operators an integral formulation.

We can prove that the harmonic functional calculus for unbounded operators is well defined because it does not depend on the point $\alpha\in \rho_S(T)$.
Moreover, we also have to prove that the functional calculus is independent from the kernel
of $\mathcal{D}$ taking slice hyperholomorphic functions.

\begin{theorem}
Let $T \in \mathcal{KC}(X)$,
assume that the operators $T_{\ell}$, $ \ell=0,1,2,3$ have real spectrum,
  where one of the $T_{\ell}$ is the zero operator. Let $ \rho_S(T) \cap \mathbb{R} \neq \emptyset$ and suppose that $ \alpha \in \rho_S(T) \cap \mathbb{R}$.
For $ \tilde{f}= \mathcal{D}f$ with $f \in \mathcal{SH}_L(\overline{\sigma}_S(T))$, the operator $ \tilde{f}(T)$ defined in \eqref{harmo1} satisfies
\begin{equation}
\label{harmo2}
\tilde{f}(T)=- \frac{1}{ \pi} \int_{\partial(U \cap \mathbb{C}_J)} \mathcal{Q}_{c,s}(T)^{-1}ds_J f(s),
\end{equation}
where $U$ is any unbounded slice Cauchy domain with $ \sigma_S(T) \subset U$ and $ \bar{U} \subset \hbox{dom}(f)$ and $J \in \mathbb{S}$.
A similar integral representation holds for $f \in \mathcal{SH}_R(\overline{\sigma}_S(T))$.
\end{theorem}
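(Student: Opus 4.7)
The plan is to reduce the claim to the bounded $Q$-functional calculus applied to $A = (T-\alpha\mathcal{I})^{-1}$, then transport the resulting bounded integral to an unbounded contour around $\sigma_S(T)$ via the homeomorphism $\phi_\alpha$, using Theorem \ref{protQAQT} as the algebraic bridge. To begin, observe that $A$ inherits from $T$ all properties required by Definition \ref{Qfun}: its components commute, have real spectrum, and (after the real shift by $\alpha$) one of them is zero. For any bounded slice Cauchy domain $V$ with $\sigma_S(A)\subset V$ and $\overline V\subset\operatorname{dom}(\phi)$, where $\phi = f\circ\phi_\alpha^{-1}\in\mathcal{SH}_L(\sigma_S(A))$ by Proposition \ref{simaAT}, the bounded harmonic functional calculus gives
\[
\tilde\psi(A) = -\frac{1}{\pi}\int_{\partial(V\cap\mathbb{C}_J)} \mathcal{Q}_{c,p}(A)^{-1}\, dp_J\, \phi(p).
\]
Now $A\bar A = \mathcal{Q}_{c,\alpha}(T)^{-1}$ is a bounded \emph{real} operator: it has no imaginary-unit content, and it commutes with $\mathcal{Q}_{c,p}(A)^{-1}$. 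Hence $A\bar A$ can be pulled inside the integral, and Theorem \ref{protQAQT} converts the integrand via $(A\bar A)\mathcal{Q}_{c,p}(A)^{-1}=\mathcal{Q}_{c,s}(T)^{-1}p^{-2}$, yielding
\[
\tilde f(T) = (A\bar A)\tilde\psi(A) = -\frac{1}{\pi}\int_{\partial(V\cap\mathbb{C}_J)} \mathcal{Q}_{c,s}(T)^{-1}\, p^{-2}\, dp_J\, \phi(p).
\]

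Next, I would change variables $p=(s-\alpha)^{-1}$, $s=p^{-1}+\alpha$. The image $U := \phi_\alpha^{-1}(V)$ is an unbounded slice Cauchy domain with $\sigma_S(T)\subset U$ and $\overline U\subset\operatorname{dom}(f)$; this follows from Proposition \ref{simaAT} together with the fact that $\phi_\alpha$ is conformal on each slice $\mathbb{C}_J$ and sends $\alpha\mapsto\infty$, $\infty\mapsto 0$. The differential satisfies $dp=-p^{2}\,ds$, and hence $p^{-2}\,dp_J = -ds_J$; combined with the fact that the inversion sends the positively oriented boundary of the bounded $V$ onto the positively oriented boundary of the unbounded $U$ (under the convention that $\partial U$ is traversed with $U$ on the left), and with $\phi(p)=f(s)$, one obtains
\[
\tilde f(T) = -\frac{1}{\pi}\int_{\partial(U\cap\mathbb{C}_J)} \mathcal{Q}_{c,s}(T)^{-1}\, ds_J\, f(s),
\]
which is the desired identity.

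The main technical obstacle is the careful orientation and sign bookkeeping in the M\"obius change of variables: one must match the minus sign in $p^{-2}\,dp_J=-ds_J$ with the orientation behaviour of $\phi_\alpha$ and with the convention chosen for the boundary of an unbounded slice Cauchy domain, and verify that the final normalization coincides with the one in the bounded case. This is analogous to, and no harder than, the book-keeping carried out in \cite[Thm.~8.2.3]{CGK} for the $F$-functional calculus. Independence from $J\in\mathbb{S}$, from the unbounded slice Cauchy domain $U$, and from the parameter $\alpha\in\rho_S(T)\cap\mathbb{R}$ then follows: the first two from the corresponding invariances at the bounded level transported along the correspondence $V\leftrightarrow U=\phi_\alpha^{-1}(V)$, and the last from the fact that $\alpha$ no longer appears on the right-hand side.
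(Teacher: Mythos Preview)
Your approach is essentially the paper's own proof, run in the opposite direction: the paper starts from the integral over $U$ (after possibly excising a small ball around $\alpha$ to ensure $\alpha\notin\overline U$), applies Theorem~\ref{protQAQT} together with the change of variables $p=\phi_\alpha(s)$, and recognizes the result as $(A\bar A)\tilde\psi(A)=\tilde f(T)$, whereas you begin with $\tilde f(T)=(A\bar A)\tilde\psi(A)$ and arrive at the integral over $U$. The only item the paper makes explicit that you do not is the preliminary reduction to $\alpha\notin\overline U$; conversely, your discussion of the orientation and sign bookkeeping is more detailed than the paper's, which simply absorbs it into the phrase ``changing of variables''.
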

\begin{proof}
We assume that $ \alpha \notin U$. If this is not the case, we can replace the set $U$ with the axially symmetric slice Cauchy domain $U \setminus \overline{B(\alpha)}$ with $ \varepsilon>0$ small enough, without altering the value of the integral \eqref{harmo2} by the Cauchy integral formula.

We set $V= \phi_{\alpha}(U)$. This is a bounded slice Cauchy domain that contain $ \sigma_S(A)$ and $ \overline{V} \subset \hbox{dom}(f \circ \phi_{\alpha}^{-1})$. From the relation among $ \mathcal{Q}_{c,s}(T)^{-1}$ and $ \mathcal{Q}_{c,p}(A)^{-1}$, see Theorem \ref{protQAQT}, and the changing of variables $p= \phi_{\alpha}(s)$ we get
\begin{eqnarray*}
- \frac{1}{ \pi} \int_{\partial(U \cap \mathbb{C}_J)} \mathcal{Q}_{c,s}(T)^{-1} ds_J f(s)&=&- \frac{(A \bar{A})}{ \pi} \int_{\partial(V\cap \mathbb{C}_J)} \mathcal{Q}_{c,p}(A)^{-1}dp_J (f \circ \phi_{\alpha}^{-1})(p)\\
&=& (A \bar{A}) \widetilde{\psi}(A)\\
&=& \tilde{f}(T).
\end{eqnarray*}
\end{proof}

\begin{remark}
Unlike to what happens for the $F$-functional calculus we do not need to require that $f(\alpha)=0$.
\end{remark}
Before we prove that the $Q$-functional calculus is well posed we need the following lemma.
\begin{lemma}
\label{const}
Let assume that $f$, $g \in \mathcal{SH}_L(\overline{\sigma}_S(T))$.
Then we have
$$
\mathcal{D}(f-g)=0\ \ \ \Rightarrow \ \ \ f-g=c,
$$
where $c$ is a constant.

\end{lemma}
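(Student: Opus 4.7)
Set $h:=f-g$. Since $\mathcal{SH}_L(\overline{\sigma}_S(T))$ is closed under real linear combinations, $h$ is again left slice hyperholomorphic on an axially symmetric open set containing $\sigma_S(T)\cup(\mathbb{H}\setminus B_r(0))$ for some $r>0$, and $h(\infty)=\lim_{q\to\infty}h(q)$ exists. Write $h$ in the slice form
\begin{equation*}
h(q)=\alpha(q_0,r)+\tfrac{\underline{q}}{r}\,\beta(q_0,r),\qquad r:=|\underline{q}|,
\end{equation*}
where $\alpha,\beta$ satisfy the Cauchy--Riemann system $\partial_{q_0}\alpha=\partial_r\beta$, $\partial_{q_0}\beta=-\partial_r\alpha$ together with the even--odd conditions \eqref{co1}.

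The core of the argument is the direct computation of $\mathcal{D}h$. Using $\partial_{q_i}r=q_i/r$ and $\partial_{q_i}(\underline{q}/r)=e_i/r-q_i\underline{q}/r^3$ for $i=1,2,3$, and the quaternionic identities $\sum_{i=1}^3 e_iq_i=\underline{q}$ and $\sum_{i=1}^3 e_i\underline{q}\,q_i=-r^2$, I would show after collecting terms that
\begin{equation*}
\mathcal{D}h(q)=\bigl(\partial_{q_0}\alpha-\partial_r\beta\bigr)+\tfrac{\underline{q}}{r}\bigl(\partial_{q_0}\beta+\partial_r\alpha\bigr)-\tfrac{2\beta}{r}.
\end{equation*}
The first two parentheses vanish by the Cauchy--Riemann equations, hence
\begin{equation*}
\mathcal{D}h(q)=-\tfrac{2}{r}\,\beta(q_0,r).
\end{equation*}
Thus the hypothesis $\mathcal{D}h=0$ forces $\beta\equiv 0$ on the open subset where $r>0$, and by the odd condition $\beta(u,-v)=-\beta(u,v)$ together with continuity, $\beta$ vanishes identically on the parameter set $\mathcal{U}$.

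With $\beta\equiv 0$, the Cauchy--Riemann system collapses to $\partial_{q_0}\alpha=0$ and $\partial_r\alpha=0$, so $\alpha$ is locally constant on $\mathcal{U}$. Because the domain of $h$ is an axially symmetric slice domain and in particular contains the connected unbounded region $\mathbb{H}\setminus B_r(0)$ on which $h\to h(\infty)$, the value of $\alpha$ on the connected component meeting this region must be $h(\infty)$. Slice-domain connectedness (the intersection with each $\mathbb{C}_J$ is a single domain meeting $\mathbb{R}$) propagates this value to the whole domain, giving $h\equiv c$ with $c=h(\infty)$.

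The only delicate point is the last step: one must justify that the parameter set $\mathcal{U}$ attached to the domain of a function in $\mathcal{SH}_L(\overline{\sigma}_S(T))$ is connected, so that ``locally constant'' becomes ``constant.'' This is built into the slice-domain framework of the calculus (the domains used to represent elements of $\mathcal{SH}_L(\overline{\sigma}_S(T))$ are slice domains intersecting $\mathbb{R}$, and the unbounded tail $\mathbb{H}\setminus B_r(0)$ glues every $\mathbb{C}_J$-slice together); the rest of the argument is the algebraic computation above.
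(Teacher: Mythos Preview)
Your core computation is correct and takes a more direct route than the paper. The paper invokes the structure theory of axially monogenic functions of degree $k$ from \cite{DSS}: since $h=f-g$ is both slice hyperholomorphic and monogenic, it must be axially monogenic of degree $0$, and then comparing the Vekua-type system satisfied by axially monogenic functions with the Cauchy--Riemann system yields $B=0$ and $A$ constant. You bypass this machinery by computing $\mathcal{D}h$ directly on the slice form, obtaining $\mathcal{D}h=-2\beta/r$ after the Cauchy--Riemann cancellations; this immediately gives $\beta\equiv 0$ and then $\partial_{q_0}\alpha=\partial_r\alpha=0$. Your approach is more elementary and self-contained, and the identity $\mathcal{D}h=-2\beta/r$ for slice hyperholomorphic $h$ is a clean formula worth recording in its own right.

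One caveat on your final step: by the paper's definition, the domain of an element of $\mathcal{SH}_L(\overline{\sigma}_S(T))$ is only required to be an axially symmetric open set containing $\sigma_S(T)$ and some $\mathbb{H}\setminus B_r(0)$; it need not be a slice domain, so your appeal to ``slice-domain connectedness'' to pass from locally constant to globally constant is not justified by the hypotheses. This is harmless, however, because the paper does not actually claim global constancy either: the remark immediately following the lemma explicitly allows $c$ to differ across connected components, and that is all that is used downstream (in Proposition~\ref{invariance} the integral over each connected component $U_i$ is handled separately). So ``$\alpha$ locally constant'' is precisely the intended conclusion, and your argument delivers it.
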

\begin{proof}
We observe that since $f$, $g \in \mathcal{SH}_L(\overline{\sigma}_S(T))$ then
$f-g\in \mathcal{SH}_L(\overline{\sigma}_S(T))$ so in particular they are of class $\mathcal{C}^1$.
By hypothesis we have that $f-g$ is a monogenic function that is
\begin{equation}\label{AMF}
f(x_0,r)-g(x_0,r)=\bigl(A(x_0,r)+J\,B(x_0,r)\bigr)P_k(J), \ \ r=|\underline{q}|,
\end{equation}
where $P_k(J)$ are suitable polynomials that depend on the angular part $J\in \mathbb{S}$ and
 $A$ and $B$ are $\mathbb H$-valued continuously differentiable functions in $\mathbb R^2$
 in polar coordinates and they are characterized by the Vekua-type system
\begin{equation}\label{VESo}
\left\{\begin{array}{ll}\partial_{x_0}A-\partial_rB&=
\displaystyle{\frac{2k+m-1}{r}}\,B\\\partial_{x_0}B+\partial_rA&=0.\end{array}\right.
\end{equation}
Monogenic functions of the form (\ref{AMF}) are called axial monogenic of degree $k$, see \cite{DSS}.
We observe that the coefficient $A$ of a slice hyperholomorphic function does not depend on $P_k(J)$, this implies $k=0$ and so that $P_0(J)=1$.
Since $f$, $g \in \mathcal{SH}_L(\overline{\sigma}_S(T))$
\begin{equation}
f(x_0,r)=g(x_0,r)+A(x_0,r)+J\,B(x_0,r)
\end{equation}
also $A(x_0,r)+J\,B(x_0,r)$
must satisfy the Cauchy-Riemann system. Hence we get $B=0$, $\partial_{x_0}A=0$ and $\partial_rA=0$. This implies that $A$ is constant.
\end{proof}
\begin{remark}
In the sequel we will always consider the constant $c$ to be different in each connected components of $ \overline{\sigma}_S(T)$.
\end{remark}
\begin{remark}
\label{bar}
By similar arguments of Lemma \ref{const} it is possible to show, for $f$, $g \in \mathcal{SH}_L(\sigma_{S}(T))$, that
$$
\mathcal{\overline{D}}(f-g)=0\ \ \ \Rightarrow \ \ \ f-g=c,
$$
where $c$ is a constant.
\end{remark}
Let us consider $f$, $f_* \in \mathcal{SH}_L(\overline{\sigma}_S(T))$ such that $ \mathcal{D}(f)= \mathcal{D}(f_*)$. By Lemma \ref{const} we have that $f_*=f+c$. In the next result we show that the harmonic functional calculus is equivalent for $f$ and $f_*$.
\begin{proposition}\label{invariance}
Let $T \in \mathcal{KC}(X)$,
assume that the operators $T_{\ell}$, $ \ell=0,1,2,3$ have real spectrum,
  where one of the $T_{\ell}$ is the zero operator. Let $ \rho_S(T) \cap \mathbb{R} \neq \emptyset$ and suppose that $ \alpha \in \rho_S(T) \cap \mathbb{R}$.
For every function $f \in \mathcal{SH}_L(\overline{\sigma}_S(T))$, the operator $ \tilde{f}(T)$ defined in \eqref{harmo1} does not depend on the choice of $ \alpha \in \rho_S(T) \cap \mathbb{R}$.
Moreover, if we replace $f$ by $f+c$ where $c$ is a quaternionic constant then the $Q$-functional calculus does not depend on $c$.
A similar consideration holds for the integral representation when $f \in \mathcal{SH}_R(\overline{\sigma}_S(T))$.
\end{proposition}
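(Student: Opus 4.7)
The plan is to dispose of the two claims by direct appeal to results already proved in this section, rather than by a fresh calculation.

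For the $\alpha$-independence, I would simply invoke the integral representation \eqref{harmo2} established in the theorem immediately preceding this proposition. That result guarantees
$$
\tilde{f}(T) = -\frac{1}{\pi} \int_{\partial(U \cap \mathbb{C}_J)} \mathcal{Q}_{c,s}(T)^{-1}\, ds_J\, f(s)
$$
for every admissible $\alpha \in \rho_S(T) \cap \mathbb{R}$, and the right-hand side contains no reference whatsoever to $\alpha$. So the operators produced for two different choices $\alpha_1, \alpha_2 \in \rho_S(T) \cap \mathbb{R}$ are both equal to this common integral and hence to each other.

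For the invariance under $f \mapsto f+c$, the cleanest approach is to argue directly from the defining formula \eqref{harmo1} rather than through the integral. Set $\phi(q) = (f \circ \phi_\alpha^{-1})(q)$, which becomes $\phi_c(q) = ((f+c) \circ \phi_\alpha^{-1})(q) = \phi(q) + c$ when $f$ is replaced by $f+c$. Since the Cauchy–Fueter operator kills constants, $\widetilde{\psi}_c(q) = \mathcal{D}(\phi(q) + c) = \mathcal{D}\phi(q) = \widetilde{\psi}(q)$. Plugging the bounded operator $A = (T - \alpha \mathcal{I})^{-1}$ into this equality and multiplying by $A\bar{A}$ immediately gives $\widetilde{(f+c)}(T) = (A\bar{A})\widetilde{\psi}_c(A) = (A\bar{A})\widetilde{\psi}(A) = \tilde f(T)$.

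A secondary check that the formal manipulation is consistent: using instead the integral representation, the difference $\widetilde{(f+c)}(T) - \tilde f(T)$ would equal $-\tfrac{1}{\pi} \int_{\partial(U \cap \mathbb{C}_J)} \mathcal{Q}_{c,s}(T)^{-1}\, ds_J \cdot c$, and this can be seen to vanish because $\mathcal{Q}_{c,s}(T)^{-1} = O(|s|^{-2})$ at infinity and the integrand has antiderivative in the variable $s$ along $\mathbb{C}_J$; a contour deformation of the unbounded slice Cauchy domain to any equivalent bounded contour together with the decay at infinity gives the same null result. I do not foresee any genuine obstacle: the only thing worth being careful about is that $f+c$ still lies in $\mathcal{SH}_L(\overline{\sigma}_S(T))$ (the limit at infinity $f(\infty)+c$ exists automatically) so that the harmonic functional calculus is indeed defined for $f+c$; once this is noted, both statements reduce to one-line consequences of results already on the table.
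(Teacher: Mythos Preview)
Your argument for $\alpha$-independence is exactly the paper's: both observe that the integral representation \eqref{harmo2} contains no $\alpha$.

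For the $c$-invariance your \emph{primary} route is correct but genuinely different from the paper's. You argue at the level of Definition~\ref{HRUNFC}: since $\mathcal{D}(\phi+c)=\mathcal{D}\phi$, the harmonic function fed into the \emph{bounded} $Q$-functional calculus is unchanged, and therefore $\widetilde{\psi}_c(A)=\widetilde{\psi}(A)$. This step silently uses that the bounded $Q$-functional calculus of Definition~\ref{Qfun} is well defined on the class $\{\mathcal{D}f\}$, i.e.\ independent of the choice of slice-hyperholomorphic primitive; that is precisely where the McIntosh hypotheses (real spectra, one $T_\ell=0$) are consumed. The paper instead stays with the integral representation and proves directly that
\[
-\frac{1}{\pi}\int_{\partial(U\cap\mathbb{C}_J)}\mathcal{Q}_{c,s}(T)^{-1}\,ds_J\,c=0,
\]
by writing $\mathcal{Q}_{c,s}(T)^{-1}=\tfrac{1}{4}\big(-F_L(s,T)s+TF_L(s,T)\big)$ and invoking the monogenic functional calculus to kill $\int_{\partial(U_i\cap\mathbb{C}_J)}F_L(s,T)s\,ds_J$ and $\int_{\partial(U_i\cap\mathbb{C}_J)}TF_L(s,T)\,ds_J$ on each connected component $U_i$ of $U$. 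Your approach is shorter because it offloads this work to the bounded theory; the paper's approach is more self-contained and makes visible exactly which integral vanishes and why.

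Your \emph{secondary check}, however, is not quite right in the generality the paper intends. Per the remark following Lemma~\ref{const}, the constant $c$ is allowed to differ on the connected components of $\overline{\sigma}_S(T)$, i.e.\ $c$ is only locally constant. In that case the integral does not reduce to $\big(\int_{\partial(U\cap\mathbb{C}_J)}\mathcal{Q}_{c,s}(T)^{-1}\,ds_J\big)c$, and the decay/Cauchy argument you sketch only shows that the \emph{total} integral of $\mathcal{Q}_{c,s}(T)^{-1}$ over $\partial(U\cap\mathbb{C}_J)$ vanishes (this is exactly the content of Remark~\ref{rinvariance}); it does not show that the integral over the boundary of each individual component $U_i$ vanishes, which is what is needed when the $c$-values differ from component to component. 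For that componentwise vanishing one really needs the McIntosh monogenic calculus, either invoked directly as the paper does, or hidden inside the well-definedness of the bounded $Q$-calculus as in your primary argument.
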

\begin{proof}
The operator defined in \eqref{harmo1} is independent from the parameter $ \alpha \in \rho_S(T) \cap \mathbb{R}$ since the integral in \eqref{harmo2} does not depend on $ \alpha$.
For the second part of the statement let
$
f_*=f+c
$
and we consider
\[
\begin{split}
- \frac{1}{ \pi} \int_{\partial(U \cap \mathbb{C}_J)} \mathcal{Q}_{c,s}(T)^{-1} ds_J f_*(s)
&
=
- \frac{1}{ \pi} \int_{\partial(U \cap \mathbb{C}_J)} \mathcal{Q}_{c,s}(T)^{-1} ds_J (f+c)(s)
\\
&
=
- \frac{1}{ \pi} \int_{\partial(U \cap \mathbb{C}_J)} \mathcal{Q}_{c,s}(T)^{-1} ds_J f(s)
- \frac{1}{ \pi} \int_{\partial(U \cap \mathbb{C}_J)} \mathcal{Q}_{c,s}(T)^{-1} ds_J c
\\
&= (A \bar{A}) \widetilde{\psi}(A)
\\
&= \tilde{f}(T),
\end{split}
\]
where we have used the definition of the harmonic functional calculus for bounded operators, see Definition \ref{Qfun}, and the fact that
\begin{equation}\label{invariance1}
- \frac{1}{ \pi} \int_{\partial(U \cap \mathbb{C}_J)} \mathcal{Q}_{c,s}(T)^{-1} ds_J c=0.
\end{equation}
The last equality is a consequence of two properties of the $F$-resolvent operator and $Q$-resolvent operator. The first property is expressed by the left $F$-resolvent equation  \cite[Thm. 7.3.1]{CGK}
\begin{equation}\label{FREL}
\Q_{c,s}(T)^{-1}=\frac{1}{4}\big(-F_L(s,T)s+TF_L(s,T)\big)
\end{equation}
which implies
$$
- \frac{1}{ \pi} \int_{\partial(U \cap \mathbb{C}_J)} \mathcal{Q}_{c,s}(T)^{-1} ds_J c
=
- \frac{1}{ \pi} \int_{\partial(U \cap \mathbb{C}_J)} \frac{1}{4}\big(-F_L(s,T)s+TF_L(s,T)\big)ds_J c.
$$
The second property is that the following integrals are zero:
$$\int_{\partial(U_i \cap \mathbb{C}_J)} F_L(s,T)s ds_J=0\quad \textrm{and}\quad  \int_{\partial(U_i \cap \mathbb{C}_J)} TF_L(s,T) ds_J=0,$$
where $U_i$'s, for some $q\in \mathbb N$ and $i=1,\dots, q$, are the connected components of $U$. This fact is a consequence of the monogenic functional calculus developed by McIntosh and collaborators (see \cite{JM} and \cite[Remark 7.1.13 and Lemma 7.4.1]{CGK}). We can apply this calculus since we are assuming that the operators $T_{\ell}$, $ \ell=0,1,2,3$ have real spectrum and one of the $T_{\ell}$ is the zero operator.
\end{proof}
\begin{remark}\label{rinvariance}
In the hypothesis of the previous theorem, if  we consider the constant $c$ the same in all the connected components of $U$, we can delete the request that one of the $T_{\ell}$ is the zero operator. Indeed, in this case to prove that
$$
- \frac{1}{ \pi} \int_{\partial(U \cap \mathbb{C}_J)} \mathcal{Q}_{c,s}(T)^{-1} ds_J c=0,
$$
 it is sufficient to apply the Cauchy integral theorem for the left slice hyperholomorphic vector-valued function $\mathcal{Q}_{c,s}(T)^{-1}$.
\end{remark}

We conclude this section by proving a linearity property and a product rule for the $Q$-functional calculus for unbounded operators. These arise naturally as consequences of the respective properties of the $Q$-functional calculus for bounded operators.

\begin{theorem}
Let $T \in \mathcal{KC}(X)$,
assume that the operators $T_{\ell}$, $ \ell=0,1,2,3$ have real spectrum,
  where one of the $T_{\ell}$ is the zero operator. Let $ \rho_S(T) \cap \mathbb{R} \neq \emptyset$ and suppose that $ \alpha \in \rho_S(T) \cap \mathbb{R}$ and $A:= (T- \alpha \mathcal{I})^{-1}$.
  \begin{itemize}
\item (\emph{Linearity}) If $f$, $g \in \mathcal{SH}_L(\overline{\sigma}_S(T))$ and $a \in \mathbb{H}$, then
$$ (\tilde{f}a+\tilde{g})(T)=\tilde{f}(T)a+\tilde{g}(T).$$
Similarly, if $f$, $g \in \mathcal{SH}_R(\overline{\sigma}_S(T))$ and $a \in \mathbb{H}$, then
$$ (a\tilde{f}+\tilde{g})(T)=a\tilde{f}(T)+\tilde{g}(T).$$
\item (\emph{Product rule}) If $f \in \mathcal{N}(\overline{\sigma}_S(T))$ and $g \in \mathcal{SH}_L(\overline{\sigma}_S(T))$ or $f \in \mathcal{SH}_R(\overline{\sigma}_S(T))$ and $g \in \mathcal{N}(\overline{\sigma}_S(T))$, then
$$ \mathcal{D}(fg)(T)= f(T)(\mathcal{D}g)(T)+(\mathcal{D}f)(T) g(T)+ (A \bar{A})^{-1}(\mathcal{D}f)(T) \underline{A} (\mathcal{D}g)(T),$$
where $ \underline{A}:= A_1e_1+A_2e_2+A_3e_3$.
\end{itemize}
\end{theorem}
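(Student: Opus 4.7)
The plan is to reduce both identities to their counterparts for the bounded $Q$-functional calculus via the defining relation $\tilde h(T)=(A\bar A)\tilde\psi_h(A)$, where $\phi_h:=h\circ\phi_\alpha^{-1}$ and $\tilde\psi_h:=\mathcal{D}\phi_h$. Two observations are used throughout: composition with $\phi_\alpha^{-1}$ is pointwise, so it respects both right quaternionic linear combinations and pointwise products, giving $\phi_{fa+g}=\phi_f a+\phi_g$ and $\phi_{fg}=\phi_f\phi_g$; and $A\bar A=(\alpha^2\mathcal{I}-\alpha(T+\bar T)+T\bar T)^{-1}$ is a real (scalar) bounded operator on $X$, hence it commutes with every bounded operator, in particular with each $a\in\mathbb H$, with $\underline A$, and with $\phi_f(A)$ (since $\phi_f$ is intrinsic).

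\textbf{Linearity.} Set $h:=fa+g$, so that $\tilde h=\mathcal{D}(fa+g)=\tilde f a+\tilde g$ by linearity of $\mathcal{D}$. From $\phi_h=\phi_f a+\phi_g$ one has $\tilde\psi_h=\tilde\psi_f a+\tilde\psi_g$. The linearity \eqref{line1} of the bounded $Q$-calculus at $A$ yields $\tilde\psi_h(A)=\tilde\psi_f(A)a+\tilde\psi_g(A)$; multiplying on the left by $(A\bar A)$ and moving it past the scalar $a$ produces $\tilde h(T)=\tilde f(T)a+\tilde g(T)$. The right-slice case is obtained by the symmetric argument, interchanging the sides on which $a$ is multiplied.

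\textbf{Product rule.} Assume $f\in\mathcal{N}(\overline\sigma_S(T))$ and $g\in\mathcal{SH}_L(\overline\sigma_S(T))$; Proposition \ref{simaAT} gives $\phi_f\in\mathcal{N}(\sigma_S(A))$ and $\phi_g\in\mathcal{SH}_L(\sigma_S(A))$, so the bounded product rule \eqref{prodrule} applied at $A$ reads
\begin{equation*}
\mathcal{D}(\phi_f\phi_g)(A)=\phi_f(A)(\mathcal{D}\phi_g)(A)+(\mathcal{D}\phi_f)(A)\phi_g(A)+(\mathcal{D}\phi_f)(A)\underline A(\mathcal{D}\phi_g)(A).
\end{equation*}
Since $\phi_{fg}=\phi_f\phi_g$, multiplying on the left by $(A\bar A)$ gives $\mathcal{D}(fg)(T)$ on the left-hand side. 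On the right: in the first summand, move $(A\bar A)$ past $\phi_f(A)=f(T)$ to obtain $f(T)\cdot(A\bar A)(\mathcal{D}\phi_g)(A)=f(T)(\mathcal{D}g)(T)$; in the second, $(A\bar A)$ combines with $(\mathcal{D}\phi_f)(A)$ to give $(\mathcal{D}f)(T)g(T)$; in the third, insert $(A\bar A)^{-1}(A\bar A)$ and distribute via scalar commutativity to obtain
\begin{equation*}
(A\bar A)(\mathcal{D}\phi_f)(A)\underline A(\mathcal{D}\phi_g)(A)=(A\bar A)^{-1}\bigl[(A\bar A)(\mathcal{D}\phi_f)(A)\bigr]\underline A\bigl[(A\bar A)(\mathcal{D}\phi_g)(A)\bigr]=(A\bar A)^{-1}(\mathcal{D}f)(T)\underline A(\mathcal{D}g)(T).
\end{equation*}
The case $f\in\mathcal{SH}_R(\overline\sigma_S(T))$, $g\in\mathcal{N}(\overline\sigma_S(T))$ follows by the symmetric computation with the factor orders reversed.

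\textbf{Main obstacle.} No step is deep; the only non-routine point is book-keeping the factors of $(A\bar A)$. Each occurrence of $(\mathcal{D}h)(T)$ absorbs one factor $(A\bar A)$ relative to $(\mathcal{D}\phi_h)(A)$, while $h(T)=\phi_h(A)$ carries none; consequently the quadratic-in-$\mathcal{D}$ term requires two factors $(A\bar A)$ on the right against only one available from the left, which is exactly what forces the prefactor $(A\bar A)^{-1}$ to appear in the statement. The auxiliary point is that $A\bar A$ commutes with $\underline A$ and with the intrinsic functional calculus $\phi_f(A)$, both immediate from $A\bar A$ being a real bounded operator.
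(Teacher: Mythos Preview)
Your proof is correct and follows essentially the same route as the paper: reduce to the bounded $Q$-calculus via $\tilde h(T)=(A\bar A)\,\mathcal{D}(h\circ\phi_\alpha^{-1})(A)$, use $\phi_{fa+g}=\phi_f a+\phi_g$ and $\phi_{fg}=\phi_f\phi_g$, apply \eqref{line1} and \eqref{prodrule} at $A$, and shuffle the scalar operator $A\bar A$ through to reconstitute the unbounded expressions. One small wording issue: $A\bar A$ being real does not make it commute with \emph{every} bounded operator on $X$, only with right multiplication by quaternions and with operators built from the commuting components $A_0,\dots,A_3$ (which is exactly what you actually use), so you should soften that phrase.
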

\begin{proof}
Let $ \alpha \in \rho_S(T) \cap \mathbb{R}$, set $A= (T- \alpha \mathcal{I})^{-1}$, and define $ \phi_{\alpha}$ as in \eqref{1first}. By \eqref{harmo1} and \eqref{line1} we have

\begin{eqnarray*}
(\tilde{f}a+ \tilde{g})(T)&=& (A \bar{A})\mathcal{D} \left((fa+g) \circ \phi_{\alpha}^{-1}\right) (A)\\
&=& (A \bar{A})\mathcal{D}(f \circ \phi_{\alpha}^{-1})(A) a+ (A \bar{A})\mathcal{D}(g \circ \phi_{\alpha}^{-1})(A)\\
&=& \tilde{f}(T)a+ \tilde{g}(T).
\end{eqnarray*}
The statement for right slice hyperholomorphic functions follows from the second equation of \eqref{line1} and similar arguments.

\medskip

To show the product rule we have to use \eqref{harmo1}, \eqref{prodrule} and \eqref{Sfun1}. Since the operator $A\bar A$ is scalar valued, we get

\begin{eqnarray*}
\mathcal{D}(fg)(T)&=& (A \bar{A})\mathcal{D}((fg) \circ \phi_{\alpha})(A)\\
&=& (A \bar{A})\mathcal{D}\left((f \circ \phi_{\alpha}^{-1})(g \circ \phi_{\alpha}^{-1})\right)(A)\\
&=&  (f \circ \phi_{\alpha}^{-1})(A) (A \bar{A}) \mathcal{D} \left( g \circ \phi_{\alpha}^{-1} \right)(A)+ (A \bar{A})\mathcal{D}(f \circ \phi_{\alpha}^{-1}) (A) \left(  g \circ \phi_{\alpha}^{-1} \right) (A)\\
&&+ \mathcal{D}( f \circ \phi_{\alpha}^{-1})(A) \underline{A} (A \bar{A})\mathcal{D}( g \circ \phi_{\alpha}^{-1})(A)\\
&=& f(T)(\mathcal{D}g)(T)+(\mathcal{D}f)(T) g(T)+ (A \bar{A})^{-1}(\mathcal{D}f)(T) \underline{A} (\mathcal{D}g)(T).
\end{eqnarray*}
\end{proof}
\begin{remark}
In order to obtain polyharmonic functional calculus for unbounded operators we need to consider the Fueter-Sce's theorem in the Clifford algebra setting in dimension at least five. In this case we have factorizations of the Fueter extension that take into account polyharmonic functions of any order.
\end{remark}

We  conclude this section with a relation among
the operators $\Q_{c,p}(A)^{-1}$, $\Q_{c,s}(T)^{-1}$ and $S^{-1}_L(s,T)$ that is of independent interest and shows a
 link between the resolvent operators of the harmonic fine structure.
\begin{proposition}\label{REQAQT}
Let $T \in \mathcal{KC}(X)$, let $ \alpha \in \rho_S(T) \cap \mathbb{R}\not=\emptyset$ and define $A:=(T- \alpha \mathcal{I})^{-1}$. For $s \in \rho_S(T)$ and
$p=\phi_{\alpha}(s)=(s- \alpha)^{-1}$, we have
$$
A\Q_{c,p}(A)^{-1}p^3v
=   - S^{-1}_L(s,T)pv +\Q_{c,s}(T)^{-1}v, \ \ \ v\in X.
$$
\end{proposition}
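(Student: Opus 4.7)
The plan is to reduce the claim to Theorem \ref{protQAQT} together with the algebraic identity $(s-\alpha)p=1$ that encodes the inverse $p=\phi_\alpha(s)$.

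First, I would invoke Theorem \ref{protQAQT} to rewrite $\Q_{c,p}(A)^{-1}=(A\bar A)^{-1}\Q_{c,s}(T)^{-1}p^{-2}$. Multiplying on the left by $A$ and on the right by $p^3$ (applied to $v\in X$), one obtains
\[
A\,\Q_{c,p}(A)^{-1}p^{3}v = A(A\bar A)^{-1}\Q_{c,s}(T)^{-1}p\,v.
\]
The next step is the key algebraic observation: since $T$ and $\bar T$ commute and $\alpha\in\mathbb R$, the operator $A\bar A=[(T-\alpha\id)(\bar T-\alpha\id)]^{-1}$ is scalar-valued, so $(A\bar A)^{-1}=(\bar T-\alpha\id)(T-\alpha\id)$ commutes with $A$, and we may simplify $A(A\bar A)^{-1}=\bar A^{-1}=\bar T-\alpha\id$. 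Hence
\[
A\,\Q_{c,p}(A)^{-1}p^{3}v=(\bar T-\alpha\id)\Q_{c,s}(T)^{-1}p\,v.
\]

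Second, I would rewrite $\bar T-\alpha\id = -(s\id-\bar T)+(s-\alpha)\id$ and multiply through by $\Q_{c,s}(T)^{-1}p$. Using that $s-\alpha$ is a real scalar (hence commutes with everything) together with $(s-\alpha)p=1$, the second term collapses to $\Q_{c,s}(T)^{-1}v$, while recognizing $(s\id-\bar T)\Q_{c,s}(T)^{-1}=S^{-1}_L(s,T)$ from the commutative definition of the left $S$-resolvent (Definition \ref{S-reslUNCOOM}) yields
\[
(\bar T-\alpha\id)\Q_{c,s}(T)^{-1}p\,v = -S^{-1}_L(s,T)p\,v+\Q_{c,s}(T)^{-1}v,
\]
which is the claimed identity.

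The only delicate point is justifying that $A$ commutes with $(A\bar A)^{-1}=\alpha^{2}\id-\alpha(T+\bar T)+T\bar T$ on the relevant domains; this follows from the commutativity of the components of $T$ (the defining hypothesis of $\mathcal{KC}(X)$), which makes $A$ and $\bar T$ commute, and from the fact that $\Q_{c,s}(T)^{-1}$ ranges inside $\mathrm{dom}(T\bar T)\subset\mathrm{dom}(\bar T)$, so $(\bar T-\alpha\id)\Q_{c,s}(T)^{-1}$ is a well-defined operator on $X$. Everything else is straightforward bookkeeping of real scalars.
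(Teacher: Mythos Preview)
Your proof follows essentially the same route as the paper's: invoke Theorem~\ref{protQAQT}, factor $(A\bar A)^{-1}$ so as to expose $\bar A^{-1}=\bar T-\alpha\id$, split $\bar T-\alpha\id=-(s\id-\bar T)+(s-\alpha)\id$, and recognize the commutative left $S$-resolvent. The only difference is cosmetic---you multiply by $A$ at the start and simplify $A(A\bar A)^{-1}=\bar A^{-1}$, whereas the paper carries the factor $A^{-1}$ through and cancels it at the end.

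One point needs correcting: you assert that ``$s-\alpha$ is a real scalar (hence commutes with everything)''. This is false in general, since $s\in\rho_S(T)\subset\mathbb{H}$ need not be real. The commutation you actually need---that $s-\alpha$ commutes with $\Q_{c,s}(T)^{-1}$---does hold, but for a different reason: $\Q_{c,s}(T)=s^2\id-s(T+\bar T)+T\bar T$ is built from $s$ and the \emph{real} operators $T+\bar T=2T_0$ and $T\bar T$, all of which commute with $s$ (and hence with $s-\alpha$). With this correction the argument goes through.
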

\begin{proof}
By Theorem \ref{protQAQT}
since we assumed that $T\in\mathcal{KC}(X)$ and that there exists a point $\alpha\in \rho_{S}(T)\cap \mathbb{R}\not=\emptyset$, for  $A:=(T-\alpha\id)^{-1}$ and $p =\phi_{\alpha}(s)$, we have
$$
\Q_{c,p}(A)^{-1} = \left( A\overline{A} \right)^{-1}\Q_{c,s}(T)^{-1}p^{-2}.
$$
So we have the chain of equalities
\[
\begin{split}
\Q_{c,p}(A)^{-1} &=  A^{-1} (\overline{A})^{-1}\Q_{c,s}(T)^{-1}p^{-2}
\\
&
=  A^{-1} (\overline{T}-\alpha\id)\Q_{c,s}(T)^{-1}p^{-2}
\\
&
=  A^{-1} (\overline{T}-s\id+s\id-\alpha\id)\Q_{c,s}(T)^{-1}p^{-2}
\end{split}
\]
and also
\[
\begin{split}
\Q_{c,p}(A)^{-1}&=  A^{-1} (-(s\id-\overline{T})+s\id-\alpha\id)\Q_{c,s}(T)^{-1}p^{-2}
\\
&
=  A^{-1} (-(s\id-\overline{T})\Q_{c,s}(T)^{-1}      +(s-\alpha)\Q_{c,s}(T)^{-1})p^{-2}.
\end{split}
\]
Recalling the left $S$-resolvent operator
$$
S^{-1}_L(s,T)=(s \mathcal{I}- \bar{T}) \mathcal{Q}_{c,s}(T)^{-1}
$$
and multiplying on the right for $p^{2}$
we obtain
$$
\Q_{c,p}(A)^{-1}p^2
=   A^{-1}\big( - S^{-1}_L(s,T) +\Q_{c,s}(T)^{-1}(s-\alpha)\big).
$$
Taking into account the relation $p =\phi_{\alpha}(s)$ we get
$$
\Q_{c,p}(A)^{-1}p^2
= A^{-1}\big( - S^{-1}_L(s,T) +\Q_{c,s}(T)^{-1}p^{-1}\big).
$$
Multiplying one more time on the right by $p$  we finally get
$$
A\Q_{c,p}(A)^{-1}p^3v
=   - S^{-1}_L(s,T)pv +\Q_{c,s}(T)^{-1}v.
$$
The manipulation on $A$ are justified by the fact that it is a bounded operator and the above relation holds for every $v\in X$.
\end{proof}

\section{Polyanalytic functional calculus for unbounded operators}

As we have discussed in Section \ref{SECTHARFC},
although $ \Delta= \mathcal{D} \mathcal{\overline{D}}=\mathcal{\overline{D}} \mathcal{D} $,
if we apply the conjugate Cauchy-Fueter operator $ \mathcal{\overline{D}}$ to a slice hyperholomorphic function $f(q)$
we get a different set of functions with respect to the harmonic case.
In fact, as a direct consequence of the Fueter's theorem we have
$$ \mathcal{D}^2 \left(\mathcal{\overline{D}}f(q)\right)= \Delta \mathcal{D} f(q)=0.$$
So we obtain a second
sequence of function spaces for the quaternionic fine structure
\begin{equation}
\label{facto2}
\begin{CD}
	\textcolor{black}{\mathcal{O}(D)}  @>T_{F1}>> \textcolor{black}{\mathcal{SH}(\Omega_D)} @>\mathcal{\overline{D}}>> \textcolor{black}{\mathcal{AP}_2(\Omega_D)}  @>\ \  \mathcal{D} >>\textcolor{black}{\mathcal{AM}(\Omega_D)},
\end{CD}
\end{equation}
where $ \mathcal{AP}_2(\Omega_D)$ is the set of axially polyanalytic functions of order $2$.

\begin{definition}[Axially polyanalytic functions of order $2$]
Let $\Omega \subset \mathbb{R}^4$ be an axially symmetric slice domain. A function $f: \Omega \to \mathbb{H}$ is said to be (left) axially polyanalytic of order $2$ if $f \in \mathcal{C}^2(\Omega)$ and
$$ \mathcal{D}^2 f(q)= \left (\partial_{q_0}+ \sum_{i=1}^3 e_i \partial_{q_i}\right)^2 f(q)=0,$$
moreover $f$ has the form \eqref{axial} and the functions $A$ and $B$ satisfy the even-odd conditions \eqref{co1}.
\end{definition}
We now focus our attention on the polyanalytic functional calculus for unbounded operators of this fine structure.
In order to do this we need the basic notions for this functional calculus for bounded operators, see \cite{Polyf1} for more details.
\begin{definition}[$P_2$-resolvents]\label{DEFP2res}
Let $T=T_0+ \sum_{i=1}^3 e_i T_i \in \mathcal{BC}(X)$, $s \in \mathbb{H}$, we define the left (resp. right) $P_2$-resolvent operator as
$$ P_2^L(s,T)=-F_L(s,T)s+T_0 F_{L}(s,T), \quad \left(\hbox{resp.} \quad P_2^R(s,T)=-sF_R(s,T)+T_0 F_{R}(s,T) \right).$$
\end{definition}

We recall that the polyanalytic functional calculus for bounded operators is also called $P_2$-functional calculus.

\begin{definition}[Polyanalytic functional calculus for bounded operators]
Let $T \in \mathcal{BC}(X)$,
assume that the operators $T_{\ell}$, $ \ell=0,1,2,3$ have real spectrum,
  where one of the $T_{\ell}$ is the zero operator,
  and set $ds_J=ds(-J)$ for $J \in \mathbb{S}$. For every function $ \breve{f}^\circ= \mathcal{\overline{D}} f$ (resp. $\breve{f}^{\circ}= f \mathcal{\overline{D}}$ ) with $f \in \mathcal{SH}_L(\sigma_S(T))$ (resp. $f \in \mathcal{SH}_R(\sigma_S(T))$),
  we define
$$ \breve{f}^{\circ}(T):= \frac{1}{2 \pi} \int_{\partial(U \cap \mathbb{C}_J)} P_2^L(s,T) ds_J f(s), \quad \left(\hbox{resp.} \quad \breve{f}^{\circ}(T):= \frac{1}{2 \pi} \int_{\partial(U \cap \mathbb{C}_J)} f(s) ds_J P_2^R(s,T)\right),$$
where $F_L(s,T)$ (resp. $F_R(s,T)$) is the left (resp. right) resolvent operator defined in \eqref{star1}.
\end{definition}
\begin{remark}
Similarly to the $Q$-functional calculus and $F$-functional calculus the independence of the polyanalytic functional calculus from the $ \hbox{ker}(\mathcal{\overline{D}})$ follows by the monogenic functional calculus of McIntosh. This requires that the operators $T_{\ell}$, $\ell=0,1,2,3$ have real spectrum and one of the $T_{\ell}$ has to be zero.
\end{remark}
\begin{remark}
As it happens for other functional calculi based on the $S$-spectrum,
also the polyanalytic functional calculus enjoys a linearity property.
If $ \breve{f}^{\circ}= \mathcal{\overline{D}}f$ and $ \breve{g}^{\circ}= \mathcal{\overline{D}} g$ (resp. $\breve{f}^{\circ}= f\mathcal{\overline{D}}$ and $ \breve{g}^{\circ}=g \mathcal{\overline{D}} $) with $f$, $g \in \mathcal{SH}_L(\sigma_S(T))$ (resp.  $f$, $g \in \mathcal{SH}_R(\sigma_S(T))$ ) and $a \in \mathbb{H}$ then
\begin{equation}
\label{polylin}
(\breve{f}^{\circ}a+ \breve{g}^{\circ})(T)=\breve{f}^{\circ}(T)a+  \breve{g}^{\circ}(T) \quad \left(\hbox{resp.} \quad (a\breve{f}^{\circ}+ \breve{g}^{\circ})(T)=a\breve{f}^{\circ} (T)+  \breve{g}^{\circ}(T) \right).
\end{equation}
\end{remark}
Considering the transformation from unbounded operators $T \in \mathcal{KC}(X)$ to bounded operators $A$
defined as
$A:=(T- \alpha \mathcal{I})^{-1}$,
we now show a relation between the resolvents $P_2^L(p,A)$ and $P_2^{L}(s,T)$
 that will lead us to the suitable definition of the $P_2$-functional calculus.

\begin{theorem}\label{t1}
	Let $T\in\mathcal{KC}(X)$ for $\alpha \in\rho_S(T)\cap \mathbb R\neq \emptyset$ we define $A=(T-\alpha\mathcal I)^{-1}$. Assume that $s\in\rho_S(T)$ and $p=(s-\alpha)^{-1}$, then we have
	\begin{equation}\label{f1}
		P^L_2(p,A)p^4=P^L_2(s, T)-4(s\mathcal I-\bar T)\mathcal Q_{c,s}(T)^{-1}p+4(T_0-s\mathcal I)\mathcal Q_{c,s}(T)^{-1}p+4p^2\mathcal I
	\end{equation}
where (\ref{f1}) holds on $X$.
\end{theorem}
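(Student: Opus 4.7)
The plan is to start from the definition $P_2^L(p,A) = -F_L(p,A)\,p + A_0 F_L(p,A)$, which gives
\[
P_2^L(p,A)\,p^4 = -F_L(p,A)\,p^5 + A_0 F_L(p,A)\,p^4,
\]
and to rewrite each summand using the already-established identity (\ref{NEWFAP}), namely $(A\bar{A})F_L(p,A)p^4 = -4p\,\mathcal{Q}_{c,s}(T)^{-1} - F_L(s,T)$.

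For the second summand, I would use the identity $A_0 = (T_0 - \alpha\mathcal{I})(A\bar{A})$, which follows from $A + \bar{A} = 2 A_0$ combined with $A + \bar{A} = (T + \bar{T} - 2\alpha\mathcal{I})(A\bar{A}) = 2(T_0 - \alpha\mathcal{I})(A\bar{A})$. Substituting directly yields
\[
A_0 F_L(p,A)\,p^4 = (T_0 - \alpha\mathcal{I})\bigl[-4p\,\mathcal{Q}_{c,s}(T)^{-1} - F_L(s,T)\bigr].
\]

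For the first summand, I would right-compose the main identity with left multiplication by $p$ (legitimate since $\mathcal{Q}_{c,s}(T)^{-1}$ commutes with $p$, because $T\bar{T}$ is a real operator and $s, p$ lie in a common commutative subfield $\mathbb{C}_{J_s}$), to obtain $(A\bar{A}) F_L(p,A)\,p^5 = -4p^2 \mathcal{Q}_{c,s}(T)^{-1} - F_L(s,T)\,p$. Inverting $A\bar{A}$ via the key expansion
\[
(A\bar{A})^{-1} = T\bar{T} - 2\alpha T_0 + \alpha^2 \mathcal{I} = \mathcal{Q}_{c,s}(T) + 2p^{-1}T_0 - p^{-1}(s+\alpha)\mathcal{I},
\]
derived from $T\bar{T} = \mathcal{Q}_{c,s}(T) - s^2\mathcal{I} + 2sT_0$ and $s - \alpha = p^{-1}$, converts $-F_L(p,A)\,p^5$ into an explicit polynomial expression in $\mathcal{Q}_{c,s}(T)^{-1}$, $F_L(s,T)$, $T_0$, $\bar{T}$ and the scalars $s, p, \alpha$.

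Summing the two contributions, the $\alpha$-dependent pieces (both the scalar multiples of $p\,\mathcal{Q}_{c,s}(T)^{-1}$ and of $F_L(s,T)$) cancel pairwise, and after regrouping one obtains
\[
P_2^L(p,A)\,p^4 = P_2^L(s,T) + 4(T_0 + \bar{T} - 2s\mathcal{I})\mathcal{Q}_{c,s}(T)^{-1}\,p + 4p^2\mathcal{I},
\]
which is exactly the claimed identity once one rewrites $4(T_0 + \bar{T} - 2s\mathcal{I}) = -4(s\mathcal{I} - \bar{T}) + 4(T_0 - s\mathcal{I})$. The main obstacle is keeping track of the commutation relations between the quaternionic scalars $s, p \in \mathbb{C}_{J_s}$ and the various operators: while $\mathcal{Q}_{c,s}(T)$, $A\bar{A}$ and $T_0$ all commute with left multiplication by $s$ and $p$, the operator $\bar{T}$ (and therefore $F_L(s,T)$) does not. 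The identity nevertheless holds on all of $X$ because in the final simplification the factor $\bar{T}$ reassembles only inside the combination $(s\mathcal{I} - \bar{T})\mathcal{Q}_{c,s}(T)^{-1}$, through which the remaining right multiplication by $p$ passes by right linearity.
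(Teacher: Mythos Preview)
Your argument is correct and, in fact, coincides with the first of the two alternative routes that the paper itself sketches in the Remark immediately following the theorem: start from $P_2^L(p,A)=\tfrac{A+\bar A}{2}F_L(p,A)-F_L(p,A)p$ and feed in the relation $(A\bar A)F_L(p,A)p^4=-4p\,\mathcal Q_{c,s}(T)^{-1}-F_L(s,T)$ (equivalently, $F_L(p,A)=(A\bar A)^{-1}\bigl(-4p\,\mathcal Q_{c,s}(T)^{-1}-F_L(s,T)\bigr)p^{-4}$). Your key steps---the identity $A_0=(T_0-\alpha\mathcal I)(A\bar A)$, the expansion $(A\bar A)^{-1}=\mathcal Q_{c,s}(T)+2p^{-1}T_0-p^{-1}(s+\alpha)\mathcal I$, and the observation that the $\alpha$-dependent contributions cancel pairwise---are all valid, and once the bookkeeping is done carefully (as you note, $\bar T$ and hence $F_L(s,T)$ do not commute with left multiplication by $p$, but $(A\bar A)^{-1}$, $\mathcal Q_{c,s}(T)^{\pm1}$, $T_0$, $s$, $p$ all do commute among themselves) one lands exactly on the claimed formula.

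The paper's own proof takes a different path: it expands $P_2^L(p,A)$ directly through $\mathcal Q_{c,p}(A)^{-2}=(A\bar A)^{-2}\mathcal Q_{c,s}(T)^{-2}p^{-4}$, then computes the two factorizations $(A\bar A)^{-1}(p\mathcal I-\bar A)=p\,\mathcal Q_{c,s}(T)-(s\mathcal I-\bar T)$ and $-\tfrac12(A^{-1}+\bar A^{-1})+p(A\bar A)^{-1}=-s\mathcal I+T_0+p\,\mathcal Q_{c,s}(T)$, arriving at the compact product $(4p\,\mathcal Q_{c,s}(T)^{-1}+F_L(s,T))(-s\mathcal I+T_0+p\,\mathcal Q_{c,s}(T))p^{-4}$, which is then expanded. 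That approach is self-contained (it does not invoke (\ref{NEWFAP})) and exposes a neat factorization, whereas your route has the advantage of reusing the $F$-resolvent relation already proved in Section~2, making the link between the $F$- and $P_2$-calculi for unbounded operators more transparent. Both require the same care with domains and with the non-commutation of $\bar T$ with quaternionic scalars.
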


\begin{proof}
By Theorem \ref{protQAQT} we know that
$$
\Q_{c,p}(A)^{-1} = \left( A\overline{A} \right)^{-1}\Q_{c,s}(T)^{-1}p^{-2},\ \ {\rm for} \ \ s\in\rho_S(T)
$$
and from the definition  of the left $P_2$-resolvent and of the $F$-resolvent operators we get
	\[
		\begin{split}
			P^L_2(p,A)&=\frac{(A+\bar A)}{2}F_L(p,A)-F_L(p,A)p\\
			&=-4\frac{(A+\bar A)}{2} \Big((p\mathcal I-\bar A)\mathcal Q_{c,p}(A)^{-2}\Big)+
4\Big((p\mathcal I-\bar A)\mathcal Q_{c,p}(A)^{-2}\Big)p\\
			&=-4(p\mathcal I-\bar A)\frac{(A+\bar A)}{2}\Big((A\bar A)^{-2}\mathcal Q_{c,s}(T)^{-2}p^{-4}\Big)
+4(p\mathcal I-\bar A) \Big((A\bar A)^{-2}\mathcal Q_{c,s}(T)^{-2}p^{-4}\Big)p\\
			&=4(A\bar A)^{-1}(p\mathcal I-\bar A)\left(-\frac{(A\bar A)^{-1}(A+\bar A)}{2}+p(A\bar A)^{-1}\right) \mathcal Q_{c,s}(T)^{-2}p^{-4}\\
			&=4(A\bar A)^{-1}(p\mathcal I-\bar A)\left(-\frac{A^{-1}+\bar A^{-1}}{2}+p(A\bar A)^{-1}\right) \mathcal Q_{c,s}(T)^{-2}p^{-4}.
		\end{split}
	\]
We observe that
$$
Q_{c,s}(T)^{-2}:\  X \to \hbox{dom}((T\overline{T})^2), \ \ {\rm for} \ \  s\in\rho_S(T).
$$
Moreover, we recall that  $\hbox{dom}(T)=\hbox{dom}(\overline{T})$ so $\hbox{dom}((T\overline{T})^2)=\hbox{dom}(T^4)$, and
$$
(A\bar A)^{-1}:\ \hbox{dom}(T\overline{T})\to X.
$$
Therefore, for $s\in\rho_S(T)$ we have
$$
(A\bar A)^{-1} \mathcal Q_{c,s}(T)^{-2}: \ X\to \hbox{dom}(T\overline{T}),
$$
and
$$
\left(-\frac{A^{-1}+\bar A^{-1}}{2}\right) \mathcal Q_{c,s}(T)^{-2}:\ X\to \hbox{dom}(T^3).
$$
Furthermore, we have that
$$
(A\bar A)^{-1}(p\mathcal I-\bar A):\ \hbox{dom}(T\overline{T})\to X.
$$
Hence for $s\in\rho_S(T)$ we get
\begin{equation}
\label{CIOEDF}
P^L_2(p,A)v=4(A\bar A)^{-1}(p\mathcal I-\bar A)\left(-\frac{A^{-1}+\bar A^{-1}}{2}+p(A\bar A)^{-1}\right) \mathcal Q_{c,s}(T)^{-2}p^{-4}v, \ \
\hbox{for all}\ \  v\in X.
\end{equation}

On $\hbox{dom}(T\overline{T})$ it is $(A\bar A)^{-1}(p\mathcal I-\bar A)=(p\mathcal I-\bar A)(A\bar A)^{-1}$, so
from the assumption $p=\Phi_\alpha(s)$, we have $\alpha=s-p^{-1}$ and
\begin{eqnarray}
\nonumber
(A\bar A)^{-1}(p\mathcal I-\bar A)& =&(p\mathcal I-(\bar T-\alpha I)^{-1})(\bar T-\alpha\mathcal I)(T-\alpha\mathcal I)\\
\nonumber
&=&p(\alpha^2\mathcal I-\alpha(T+\bar T)+T\bar T)+\alpha\mathcal I-T\\
\nonumber
&=& p(s^2\mathcal I+p^{-2}\mathcal I-2sp^{-1}\mathcal I-s(T+\bar T)+p^{-1}(T+\bar T)+T\bar T)\\
\nonumber
&& +s\mathcal I-p^{-1}\mathcal I-T\\
\label{eq1}
&=& p\mathcal Q_{c,s}(T)-(s\mathcal I-\bar T),
\end{eqnarray}
so for all $ v\in\hbox{dom}(T \bar{T})$ we have
$$
(A\bar A)^{-1}(p\mathcal I-\bar A)v=p\mathcal Q_{c,s}(T)v-(s\mathcal I-\bar T)v,
$$
$$
P^L_2(p,A)v=4\left(p\mathcal Q_{c,s}(T)-(s\mathcal I-\bar T)\right)\left(-\frac{A^{-1}+\bar A^{-1}}{2}+p(A\bar A)^{-1}\right) \mathcal Q_{c,s}(T)^{-2}p^{-4}v.
$$
With some computations we have
\begin{equation}
\label{star2}
\left(-\frac{A^{-1}+\bar A^{-1}}{2}+p(A\bar A)^{-1}\right)=-s\mathcal I+T_0+p\mathcal Q_{c,s}(T),
\end{equation}
which holds on $\hbox{dom}(T\overline{T})$.
By formula \eqref{eq1}, \eqref{CIOEDF} and \eqref{star2}, we have
$$
P^L_2(p,T)=4[p\mathcal Q_{c,s}(T)-(s\mathcal I-\bar T)](-s\mathcal I+p\mathcal Q_{c,s}(T)+T_0)\mathcal Q_{c,s}(T)^{-2}p^{-4}.
$$
Now, since $(-s\mathcal I+p\mathcal Q_{c,s}(T)+T_0)$ and $\mathcal Q_{c,s}(T)^{-2}$ commute, we have
	\[
	\begin{split}
		 P^L_2(p,T) &=4(p\mathcal Q_{c,s}(T)-s\mathcal I+\bar T)\mathcal Q_{c,s}(T)^{-2}(-s\mathcal I+T_0+p\mathcal Q_{c,s}(T))p^{-4}\\
		&=(4p\mathcal Q_{c,s}(T)^{-1}+F_L(s,T))(-s\mathcal I+T_0+p\mathcal Q_{c,s}(T))p^{-4}\\
		&=(-F_L(s,T)s+T_0F_L(s,T))p^{-4}+F_L(s,T)p\mathcal Q_{c,s}(T)p^{-4}\\
		& \quad+4(-s\mathcal I+T_0)\mathcal Q_{c,s}(T)^{-1}p^{-3}+4p^{-2}\\
		&=P^L_2(s,T)p^{-4}-4(s\mathcal I-\bar T)\mathcal Q_{c,s}(T)^{-1}p^{-3}+4(-s\mathcal I+T_0)\mathcal Q_{c,s}(T)^{-1}p^{-3}+4p^{-2}.
	\end{split}
	\]
	By multiplying on the right hand side by $p^4$ we get the statement.
\end{proof}
\begin{remark}
	It is possible to prove Theorem \ref{t1} starting from other two different ways to write the resolvent operator $P^L_2(p,A)$. The first one is the following
	
	$$ P^L_2(p,A)=\frac{A+\bar A}{2}F_L(p,A)-F_L(p,A)p$$
	
	and, the other one is this
	
	$$ P^L_2(p,A)=-4S^{-1}_L(p,A)\mathcal Q_{c,p}(A)^{-1}(A_0-p\mathcal I). $$
	
	To obtain the desired formula, in the first case it is sufficient to use the relation $F_L(p,A)=(A\bar A)^{-1}(-4p\mathcal Q_{c,s}(T)^{-1}-F_L(s,T))p^{-4}$, while, in the second case, it is sufficient to use the relations: $S^{-1}_L(p,A)=p^{-1}\mathcal I-S^{-1}_L(s,T)p^{-2}$ and $\mathcal Q_{c,p}(A)^{-1}=(A\bar A)^{-1}\mathcal Q_{c,s}(T)^{-1}p^{-2}$.
\end{remark}

Now, we are ready to give a definition for a polyanalytic functional calculus of order $2$ for unbounded operators.

\begin{definition}[Polyanalytic Functional Calculus of order 2 for unbounded operators]\label{d1}

Let $T \in \mathcal{KC}(X)$,
assume that the operators $T_{\ell}$, $ \ell=0,1,2,3$ have real spectrum,
  where one of the $T_{\ell}$ is the zero operator. Let $ \rho_S(T) \cap \mathbb{R} \neq \emptyset$, suppose that $ \alpha \in \rho_S(T) \cap \mathbb{R}$ and set $A:= (T- \alpha \mathcal{I})^{-1}$.
Assume that $\phi_\alpha$ is as in \eqref{1first}. For $f\in \mathcal{SH}_L(\bar\sigma_S(T))$ with $f(\alpha)=0$ and $\partial_{\alpha} f(\alpha)=0$, we consider the functions
	$$ \phi(q):=(f\circ \phi_\alpha^{-1})(q) $$
	
	$$ \breve{\psi}^0(q):= \mathcal{\overline{D}}(q^2\phi(q))$$
	
	and we define the operator $\breve{f}^0(T)$, for $\breve{f}^0= \mathcal{\overline{D}}f$, as
\begin{equation}
\label{polyFun}
\breve f^0(T):=\breve \psi^0(A),
\end{equation}
	where $\breve\psi^0(A)$ is defined via the bounded polyanalytic functional calculus.
A similar definition holds for $f\in \mathcal{SH}_R(\bar\sigma_S(T))$ with $f(\alpha)=0$ and $\partial_{\alpha} f(\alpha)=0$.
\end{definition}

Even for the polyanalytic functional calculus for unbounded operators we can have an integral representation.
\begin{theorem}
	Let $T \in \mathcal{KC}(X)$,
assume that the operators $T_{\ell}$, $ \ell=0,1,2,3$ have real spectrum,
  where one of the $T_{\ell}$ is the zero operator. Let $ \rho_S(T) \cap \mathbb{R} \neq \emptyset$, suppose that $ \alpha \in \rho_S(T) \cap \mathbb{R}$ and set $A:= (T- \alpha \mathcal{I})^{-1}$.
For $\breve f^0=\mathcal{\overline{D}}f$ with $f\in \mathcal{SH}_L(\bar\sigma(T))$ with $f(\alpha)=0$ and $\partial_{\alpha}f(\alpha)=0$, the operator defined in \eqref{polyFun} satisfies
\begin{equation}
\label{inteversion}
\breve{f}^0(T)= \frac{1}{2 \pi}\int_{\partial(U\cap\mathbb C_J)} P^L_2(s,T) \, ds_J\, f(s)
\end{equation}
	where $U$ is any unbounded slice Cauchy domain with $\bar \sigma_S(T)\subset U$ and $\bar U \subset \hbox{dom}(f)$ and $J$ is any imaginary unit in $\mathbb S$.
A similar integral representation holds for $f\in \mathcal{SH}_R(\bar\sigma_S(T))$ with $f(\alpha)=0$ and $\partial_{\alpha} f(\alpha)=0$.
\end{theorem}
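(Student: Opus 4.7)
The strategy is to mirror the proof of the analogous integral representation for the $F$-functional calculus of unbounded operators (which, as noted in the excerpt, follows the lines of Theorem~8.2.3 in \cite{CGK} through the identity~\eqref{NEWFAP}): the plan is to reduce the right-hand side integral to the bounded polyanalytic functional calculus applied to $A$, using the change of variables $p=\phi_\alpha(s)$ together with Theorem~\ref{t1} in place of~\eqref{NEWFAP}.

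First I would set $V:=\phi_\alpha(U)$, a bounded slice Cauchy domain containing $\sigma_S(A)$ whose closure lies in $\hbox{dom}(\phi)$, with $\phi:=f\circ\phi_\alpha^{-1}$. The two hypotheses $f(\alpha)=0$ and $\partial_\alpha f(\alpha)=0$ translate, through the expansion of $f$ around $\alpha$, into the asymptotics $\phi(p)=\tfrac{1}{2}f''(\alpha)p^{-2}+O(p^{-3})$ as $|p|\to\infty$; consequently $p\mapsto p^2\phi(p)$ is bounded and slice hyperholomorphic on a neighborhood of $\overline V$ (and in particular near $p=0\in\sigma_S(A)$, where $\phi(0)=f(\infty)$ is finite). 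Combining Definition~\ref{d1} with the integral representation of the bounded polyanalytic functional calculus applied to $A$ and to this function yields
\[
\breve f^0(T)=\breve\psi^0(A)=\frac{1}{2\pi}\int_{\partial(V\cap\mathbb{C}_J)}P_2^L(p,A)\,dp_J\,\bigl(p^2\phi(p)\bigr).
\]
I would then perform the substitution $p=(s-\alpha)^{-1}$. Since $p,dp_J,ds_J$ all lie in $\mathbb{C}_J$, one has $dp_J=-p^2\,ds_J$, and because $\phi_\alpha$ is a M\"obius map it pairs the positive orientation of $\partial V$ (with respect to the bounded $V$) with the positive orientation of $\partial U$ (with respect to the unbounded $U$). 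Using $\phi(p)=f(s)$ and commuting the $\mathbb{C}_J$-valued scalars, the integrand transforms, up to a fixed overall sign, into $P_2^L(p,A)\,p^4\,ds_J\,f(s)$. Invoking the identity of Theorem~\ref{t1},
\[
P_2^L(p,A)\,p^4=P_2^L(s,T)-4(s\mathcal{I}-\bar T)\mathcal{Q}_{c,s}(T)^{-1}p+4(T_0-s\mathcal{I})\mathcal{Q}_{c,s}(T)^{-1}p+4p^2\mathcal{I},
\]
extracts the desired main contribution $\frac{1}{2\pi}\int_{\partial(U\cap\mathbb{C}_J)}P_2^L(s,T)\,ds_J\,f(s)$ and reduces the proof to showing the vanishing of the three correction integrals.

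The main obstacle is this vanishing. Each of the three correction integrals, when transported back to the $p$-variable via the inverse substitution, becomes a contour integral over $\partial V$ of an operator-valued function that extends slice hyperholomorphically to all of $V$: the factors $\mathcal{Q}_{c,s}(T)^{-1}$ and $(s\mathcal{I}-\bar T)\mathcal{Q}_{c,s}(T)^{-1}=S_L^{-1}(s,T)$ are slice hyperholomorphic in $s$ on $\rho_S(T)$, while the apparent singularity at $p=\infty$ (equivalently at $s=\alpha\notin U$) introduced by the factors $p$ and $p^2$ is rendered removable precisely by the second-order vanishing of $\phi$ at infinity that is enforced by the hypotheses $f(\alpha)=\partial_\alpha f(\alpha)=0$. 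Cauchy's integral theorem for operator-valued slice hyperholomorphic functions on the bounded domain $V$ then forces each correction integral to vanish. Independence of the representative modulo $\ker\mathcal{\overline{D}}$ is handled exactly as for the harmonic and $F$-calculi in Proposition~\ref{invariance} and Remark~\ref{RMKDELKER}, by appealing to McIntosh's monogenic functional calculus; this is where the assumption that the $T_\ell$ have real spectrum with one of them equal to zero enters. Putting the three steps together proves~\eqref{inteversion}, and the right slice hyperholomorphic case follows by an entirely analogous argument.
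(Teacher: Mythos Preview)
Your overall strategy matches the paper's: start from the bounded $P_2$-functional calculus applied to $A$, perform the substitution $p=\phi_\alpha(s)$, and invoke Theorem~\ref{t1} to isolate the main term $\frac{1}{2\pi}\int_{\partial(U\cap\mathbb C_J)}P_2^L(s,T)\,ds_J\,f(s)$. The gap is in your treatment of the two correction integrals that carry the operator-valued factors $(s\mathcal I-\bar T)\mathcal Q_{c,s}(T)^{-1}=S_L^{-1}(s,T)$ and $(T_0-s\mathcal I)\mathcal Q_{c,s}(T)^{-1}$. You assert that, after transporting to the $p$-variable, each integrand extends slice hyperholomorphically to all of $V$, so Cauchy's theorem forces vanishing. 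This is false: $\mathcal Q_{c,s}(T)^{-1}$ and $S_L^{-1}(s,T)$ are singular precisely on $\sigma_S(T)\subset U$, and under $p=\phi_\alpha(s)$ these singularities land on $\phi_\alpha(\sigma_S(T))=\sigma_S(A)\setminus\{0\}\subset V$. Hence the integrands are \emph{not} holomorphic throughout $V$, and Cauchy's theorem does not apply. The removable singularity you manufacture at $p=\infty$ from the hypotheses $f(\alpha)=\partial_\alpha f(\alpha)=0$ takes care only of the point $s=\alpha$; it does nothing about the spectrum of $T$. (Your argument \emph{does} work for the scalar correction term $4p^2\mathcal I$, but not for the other two.)

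The paper handles the correction terms differently: it stays in the $s$-variable and reads each of them as a vector-valued Cauchy integral with kernel $S_L^{-1}(s,\alpha)=(s-\alpha)^{-1}$ (or its $\alpha$-derivative $(s-\alpha)^{-2}$) against an operator-valued left slice hyperholomorphic function such as $s\mapsto s\,\mathcal Q_{c,s}(T)^{-1}f(s)$ or $s\mapsto \mathcal Q_{c,s}(T)^{-1}f(s)$. The vector-valued Cauchy formula then \emph{evaluates} these integrals, producing explicit expressions proportional to $\mathcal Q_{c,\alpha}(T)^{-1}f(\alpha)$, $\bar T\,\mathcal Q_{c,\alpha}(T)^{-1}f(\alpha)$, $T_0\,\mathcal Q_{c,\alpha}(T)^{-1}f(\alpha)$ and $\partial_\alpha f(\alpha)$. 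These quantities vanish precisely because of the hypotheses $f(\alpha)=0$ and $\partial_\alpha f(\alpha)=0$. In other words, the two conditions are used to make the correction integrals \emph{equal} zero after evaluation, not to enforce holomorphy on a domain containing the spectrum.
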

\begin{proof}
	We assume that $\alpha\notin \bar U$. If this is not the case, we can replace $U$ by the axially symmetric slice Cauchy domain $U\setminus \overline B_{\epsilon}(\alpha)$ with an $\epsilon>0$ small enough, without altering the value of the integral by the Cauchy integral formula.
	
	The set $V=\phi_\alpha(U)$ is a bounded slice Cauchy domain with $\bar\sigma_S(T)\subset V$ and $\bar V\subset \operatorname{dom}(f\circ \phi_\alpha^{-1})$. Using the relation between $P^L_2(p,A)$ and $P^L_2(s,T)$ (see formula \eqref{f1}) we have
	\begin{equation}\label{f2}
		\begin{split}
			& \int_{\partial(U\cap\mathbb C_J)} (P_2^L(s,T)-4(s\mathcal I-\bar T)\mathcal Q_{c,s}(T)^{-1}p+4(T_0-s\mathcal I)\mathcal Q_{c,s}(T)^{-1}p+4p^2\mathcal I)\, ds_J\, f(s)\\
			&= \int_{\partial (V\cap\mathbb C_J)}P^L_2(p,A)\, dp_J\, p^2\phi(p),
		\end{split}
	\end{equation}
where we recall that $s$ and $p$ are related by $p=\phi_{\alpha}(s)=(s- \alpha)^{-1}$.
	Now we focus on the left hand side of the previous equation, we have
	\begin{equation}\label{f3}
		\begin{split}
			&\int_{\partial(U\cap\mathbb C_J)} (P_2^L(s,T)-4(s\mathcal I-\bar T)\mathcal Q_{c,s}(T)^{-1}p+4(T_0-s\mathcal I)\mathcal Q_{c,s}(T)^{-1}p+4p^2\mathcal I)\, ds_J\, f(s)\\
			&= \int_{\partial(U\cap\mathbb C_J)} P^L_2(s,T)\, ds_J\, f(s) - 4\int_{\partial (U\cap\mathbb C_J)} (s\mathcal I-\bar T)\mathcal Q_{c,s}(T)^{-1} \, ds_J\, pf(s)\\
			& +4\int_{\partial(U\cap\mathbb C_J)} p\, ds_J\, (T_0-s\mathcal I)\mathcal Q_{c,s}(T)^{-1} f(s)+4\int_{\partial(U\cap\mathbb C_J)} p^2\, ds_J\, f(s).
		\end{split}
	\end{equation}
	We consider the last three terms in the left hand side of the previous equation. By using the fact that $p=(s-\alpha)^{-1}$ we get
	\[
	\begin{split}
		& \int_{\partial (U\cap\mathbb C_J)}(s\mathcal I-\bar T) \mathcal Q_{c,s}(T)^{-1}\, ds_J\, pf(s)\\
		& =\int_{\partial (U\cap\mathbb C_J)} p\, ds_J\, s\mathcal Q_{c,s}(T)^{-1} f(s)-\bar T \int_{\partial (U\cap\mathbb C_J)} p\, ds_J\, \mathcal Q_{c,s}(T)^{-1}f(s)\\
		&= \int_{\partial (U\cap\mathbb C_J)} (s-\alpha)^{-1}\, ds_J\, s\mathcal Q_{c,s}(T)^{-1} f(s)-\bar T\int_{\partial (U\cap\mathbb C_J)} (s-\alpha)^{-1}\, ds_J\, \mathcal Q_{c,s}(T)^{-1} f(s),
	\end{split}
	\]
since the operator $\bar{T}$ is closed so it commute with the integral.
	Observe that the functions $s\mapsto s\mathcal Q_{c,s}(T)^{-1}$ and $s\mapsto \mathcal Q_{c,s}(T)^{-1}$ are operator-valued left slice hyperholomorphic, by the vector valued Cauchy formula \cite[see Theorem 2.3.19]{CGK} we get
	\[
	\begin{split}
		&\int_{\partial (U\cap\mathbb C_J)}(s\mathcal I-\bar T)\mathcal Q_{c,s}(T)^{-1} \, ds_J\, pf(s)\\
		&=\int_{\partial (U\cap\mathbb C_J)} S^{-1}_L(s,\alpha) \, ds_J\, s\mathcal Q_{c,s}(T)^{-1}f(s)-\bar T\int_{\partial (U\cap\mathbb C_J)} S^{-1}_L(s,\alpha)\, ds_J\, \mathcal Q_{c,s}(T)^{-1} f(s)\\
		&=(2\pi)\alpha\mathcal Q_{c,\alpha}(T)^{-1}f(\alpha)-(2\pi)\bar{T}\mathcal Q_{c,\alpha}(T)^{-1}f(\alpha).
	\end{split}
	\]
	By following similar arguments we get
	\[
	\begin{split}
		& \int_{\partial(U\cap\mathbb C_J)}p\, ds_J\, (T_0-s\mathcal I)\mathcal Q_{c,s}(T)^{-1}f(s)\\
		&= T_0 \int_{\partial(U\cap\mathbb C_J)} S^{-1}_L(s,\alpha)\, ds_J\, \mathcal Q_{c,s}(T)^{-1} f(s) - \int_{\partial(U\cap\mathbb C_J)} S^{-1}_L(s,\alpha) \, ds_J\, s\mathcal Q_{c,s}(T)^{-1}\\
		&= (2\pi)T_0\mathcal Q_{c,\alpha}(T)^{-1} f(\alpha)-(2\pi)\alpha\mathcal Q_{c,\alpha}(T)^{-1} f(\alpha).
	\end{split}
	\]
	To work with the last term of \eqref{f3} we need slightly different manipulations
	\[
	\begin{split}
		\int_{\partial(U\cap\mathbb C_J)} p^2\, ds_J\, f(s)&=\int_{\partial(U\cap\mathbb C_J)} (s-\alpha)^{-2}\, ds_J\, f(s)=\partial_{\alpha}\int_{\partial(U\cap\mathbb C_J)} (s-\alpha)^{-1} \, ds_J\, f(s)\\
		&= \partial_{\alpha} \int_{\partial(U\cap\mathbb C_J)} S^{-1}_L(s,\alpha) \, ds_J\, f(s)=(2\pi)\partial_{\alpha} f(\alpha).
	\end{split}
	\]
	The identity \eqref{f2} turns into
	\[
	\begin{split}
		& -(8\pi)\alpha\mathcal Q_{c,\alpha} (T)^{-1} f(\alpha)+(8\pi)\bar T\mathcal Q_{c,\alpha}(T)^{-1} f(\alpha) +(8\pi) T_0\mathcal Q_{c,\alpha}(T)^{-1} f(\alpha)\\
		& -(8\pi)\alpha\mathcal Q_{c,\alpha}(T)^{-1} f(\alpha)+8\pi\partial_{\alpha} f(\alpha) +\int_{\partial(U\cap\mathbb C_J)}P^L_2(s,T)\, ds_J\, f(s)= \int_{\partial(U\cap\mathbb C_J)} P^L_2(p,A)\, dp_J\, p^2\phi(p).
	\end{split}
	\]
	Since by assumption we have $f(\alpha)=0$ and $\partial_{\alpha} f(\alpha)=0$, we get
	\[
	\frac{1}{2\pi} \int_{\partial(U\cap\mathbb C_J)} P^L_2(s,T) \, ds_J\, f(s) = \frac 1{2\pi}\int_{\partial(U\cap\mathbb C_J)} P^L_2(p,A)\, dp_J p^2 \phi(p)\\
	=\breve{\psi_0} (A)=\breve f_0(T),
	\]
and so we get the statement.
\end{proof}
Let us consider $f$, $f_* \in \mathcal{SH}_L(\overline{\sigma}_S(T))$ such that $\mathcal{\overline{D}}(f)= \mathcal{\overline{D}}(f_{*})$. Then by Remark \ref{bar} we have that there exists a constant $c$ such that $ f_*=f+c$. In the next result we show that the polyanalytic functional calculus for $f$ and $f_{*}$ is equivalent.

\begin{proposition}
Let $T \in \mathcal{KC}(X)$,
assume that the operators $T_{\ell}$, $ \ell=0,1,2,3$ have real spectrum,
  where one of the $T_{\ell}$ is the zero operator. Let $ \rho_S(T) \cap \mathbb{R} \neq \emptyset$, suppose that $ \alpha \in \rho_S(T) \cap \mathbb{R}$.
For every function $f \in \mathcal{SH}_L(\overline{\sigma}_S(T))$ with $f(\alpha)=0$ and $\partial_{\alpha}f(\alpha)=0$, the operator $ \breve{f}^0(T)$ defined in \eqref{polyFun} does not depend on the choice of $ \alpha \in \rho_S(T) \cap \mathbb{R}$.
Moreover, if we replace $f$ by $f_*=f+c$, where $c$ is a quaternionic constant, such that $f_*(\beta)=0$ and $\partial_{\beta} f_*(\beta)=0$, for $\beta \in \rho_S(T) \cap \mathbb{R}$. Then the $P_2$-functional calculus does not depend on $c$.
Similar considerations hold for $f \in \mathcal{SH}_R(\overline{\sigma}_S(T))$ with $f(\alpha)=0$ and $\partial_{\alpha}f(\alpha)=0$.
\end{proposition}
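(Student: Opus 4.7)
The plan is to reduce both claims to the integral representation \eqref{inteversion} just established, whose integrand does not involve the auxiliary point $\alpha$.

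For the $\alpha$-independence, fix a common unbounded slice Cauchy domain $U$ with $\overline{\sigma}_S(T)\subset U$ and $\overline{U}\subset\hbox{dom}(f)$, together with an imaginary unit $J\in\mathbb{S}$. By \eqref{inteversion}, for every $\alpha\in\rho_S(T)\cap\mathbb{R}$ at which $f(\alpha)=\partial_\alpha f(\alpha)=0$, the operator $\breve{f}^0(T)$ defined via \eqref{polyFun} coincides with
\[
\frac{1}{2\pi}\int_{\partial(U\cap\mathbb{C}_J)}P_2^L(s,T)\,ds_J\,f(s).
\]
Since this expression makes no reference to $\alpha$, any two admissible choices yield the same operator.

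For the $c$-independence, note that $\hbox{dom}(f_*)=\hbox{dom}(f)$, so a common $U$ is admissible for both. Applying \eqref{inteversion} to $f$ at $\alpha$ and to $f_*=f+c$ at $\beta$, and subtracting, linearity reduces the claim to
\[
\int_{\partial(U\cap\mathbb{C}_J)}P_2^L(s,T)\,ds_J\,c=0.
\]
Since $\overline{\sigma}_S(T)\subset U$ and $U$ is open, the complement $\mathbb{H}\setminus U$ is a bounded axially symmetric set whose closure lies in $\rho_S(T)$. The map $s\mapsto P_2^L(s,T)c$ is operator-valued left slice hyperholomorphic on $\rho_S(T)$. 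Reversing the orientation of $\partial U$ and applying the vector-valued Cauchy integral theorem to the bounded slice Cauchy domain $\mathbb{H}\setminus\overline{U}$ (see \cite[Thm.~2.3.19]{CGK}) then gives the desired vanishing.

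I expect the main technical nuisance to be the bookkeeping around orientations and the verification that $\mathbb{H}\setminus\overline{U}$ is itself a slice Cauchy domain for which the vector-valued Cauchy theorem applies. Should that direct route become awkward, a clean fallback mirrors the proof of Proposition~\ref{invariance}: decompose $P_2^L(s,T)=-F_L(s,T)s+T_0\,F_L(s,T)$ and invoke the vanishing of $\int_{\partial(U_i\cap\mathbb{C}_J)} F_L(s,T)s\,ds_J$ and $\int_{\partial(U_i\cap\mathbb{C}_J)} F_L(s,T)\,ds_J$ on each connected component $U_i$ of $U$ via the monogenic functional calculus of McIntosh. The second integral vanishes because $\Delta c=0$ for a constant $c$, forcing the $F$-functional calculus of a constant to be zero. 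Both approaches are available because the standing hypotheses (real spectrum of each $T_\ell$ and one component equal to zero) are exactly those required to apply the monogenic machinery.
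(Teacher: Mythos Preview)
Your proposal is correct, and your ``fallback'' is precisely the paper's proof: the paper goes directly to the decomposition $P_2^L(s,T)=-F_L(s,T)s+T_0F_L(s,T)$ and invokes the component-wise vanishing of $\int_{\partial(U_i\cap\mathbb{C}_J)}F_L(s,T)s\,ds_J$ and $\int_{\partial(U_i\cap\mathbb{C}_J)}F_L(s,T)\,ds_J$ via the monogenic functional calculus, exactly as in Proposition~\ref{invariance}. The $\alpha$-independence argument is identical to yours.

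Two remarks on your primary ``direct'' route. First, for the Cauchy integral theorem to kill $\int P_2^L(s,T)\,ds_J\,c$ you need $s\mapsto P_2^L(s,T)$ to be \emph{right} slice hyperholomorphic (since $ds_J$ sits on its right), not left as you wrote. Second, the paper's standing convention (see the remark after Lemma~\ref{const}) is that the ``constant'' $c$ may take different values on different connected components of the domain; in that generality the Cauchy-theorem argument does not apply component by component, because the boundary of a single bounded component of $U$ may enclose part of $\sigma_S(T)$. This is why the paper prefers the monogenic-calculus argument. Your direct route does appear in the paper, but only in the remark immediately following the proposition, as a simplification available when $c$ is a single global constant (in which case the hypothesis that one $T_\ell$ vanishes can even be dropped).
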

\begin{proof}
The operator defined in \eqref{polyFun} is independent from the parameter $ \alpha \in \rho_S(T) \cap \mathbb{R}$ since the integral in \eqref{inteversion} does not depend on $ \alpha$.

Let us consider $f_*:=f+c$, where $c$ is a generic quaternion. We suppose that for some $ \beta \in \rho_S(T) \cap \mathbb{R}$ we have $f_{*}(\beta)=0$ and $\partial_{\beta}f_{*}(\beta)=0$. We observe that $ \breve{f}_*^0= \breve{f}^0$. From the integral representation \eqref{inteversion} we have
\begin{eqnarray}
\breve{f_{*}^0}(T)&=& \frac{1}{2 \pi} \int_{\partial(U \cap \mathbb{C}_J)} P_2^L(s,T) ds_J f_{*}(s)\\
&=& \frac{1}{2 \pi} \int_{\partial(U \cap \mathbb{C}_J)} P_2^L(s,T) ds_J f(s)+\frac{1}{2 \pi} \int_{\partial(U \cap \mathbb{C}_J)} P_2^L(s,T) ds_J c.
\end{eqnarray}
Since $P_2^L(s,T)=-F_L(s,T)s+T_0 F_{L}(s,T)$, we can prove that:
$$ \frac{1}{2 \pi}\int_{\partial(U \cap \mathbb{C}_J)} P_2^L(s,T) ds_J c=0,$$
using the same argument that we considered at the end of Proposition \ref{invariance} to prove equation \eqref{invariance1}. Then we get
$$ \breve{f}_*^0(T)= \breve{f}^0(T).$$
This means that we can define $ \breve{f}^0(T)$ by means of $ \beta$ instead of $ \alpha$.
\end{proof}

\begin{remark}
As we observed in Remark \ref{rinvariance}, in the hypothesis of the previous theorem, if the constant $c$ is the same in all the connected components of $U$, we can delete the request that one of the $T_{\ell}$ is the zero operator. Indeed, in this case to prove that
$$
- \frac{1}{ \pi} \int_{\partial(U \cap \mathbb{C}_J)} P^L_2(s,T) ds_J c=0,
$$
 it is sufficient to apply the Cauchy integral theorem for the left slice hyperholomorphic vector-valued function $P^L_2(s,T)$.
\end{remark}

Now, we show two important properties for the $P_2$-functional calculus for unbounded operators.

\begin{theorem}
Let $T \in \mathcal{KC}(X)$ with $\rho_S(T) \cap \mathbb{R} \neq \emptyset$. We assume that the operators $T_{\ell}$, $ \ell=0,1,2,3$ have real spectrum,
where one of the $T_{\ell}$ is the zero operator.
\begin{itemize}
\item(Linearity) If $f$, $g \in \mathcal{SH}_L(\overline{\sigma}_S(T))$ and $a \in \mathbb{H}$, then
$$ (\breve{f}^0a+\breve{g}^0)(T)= \breve{f}^0(T)a+ \breve{g}^0(T).$$
Similarly, if $f$, $g \in \mathcal{SH}_R(\overline{\sigma}_S(T))$ and $a \in \mathbb{H}$, then
$$ (a\breve{f}^0+\breve{g}^0)(T)= a\breve{f}^0(T)+ \breve{g}^0(T).$$
\item (Product rule)  If $f \in \mathcal{N}(\overline{\sigma}_S(T))$ and $g \in \mathcal{SH}_L(\overline{\sigma}_S(T))$ or $f \in \mathcal{SH}_R(\overline{\sigma}_S(T))$ and $g \in \mathcal{N}(\overline{\sigma}_S(T))$, then
$$ \mathcal{\overline{D}}(fg)(T)=f(T)(\mathcal{\overline{D}}g)(T)+ (\mathcal{\overline{D}}f)(T) g(T)- \mathcal{D}(f)(T) \underline{T} \mathcal{D}(g)(T)$$\end{itemize}
\end{theorem}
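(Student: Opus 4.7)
\emph{Linearity.} This is a direct transfer argument. Set $\phi_h:=h\circ\phi_\alpha^{-1}$ and $A:=(T-\alpha\mathcal I)^{-1}$ for $\alpha\in\rho_S(T)\cap\mathbb R$. Since $(fa+g)\circ\phi_\alpha^{-1}=\phi_fa+\phi_g$, the right linearity of $\mathcal{\overline D}$ gives
\[
\breve\psi^0_{fa+g}(q)=\mathcal{\overline D}(q^2(\phi_f(q)a+\phi_g(q)))=\breve\psi^0_f(q)a+\breve\psi^0_g(q).
\]
Evaluating at the bounded operator $A$ and invoking the right linearity \eqref{polylin} of the bounded $P_2$-functional calculus yields $\breve\psi^0_{fa+g}(A)=\breve\psi^0_f(A)a+\breve\psi^0_g(A)$, which, by Definition \ref{d1}, is precisely $(\breve f^0 a+\breve g^0)(T)=\breve f^0(T)a+\breve g^0(T)$. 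The right slice hyperholomorphic case is the mirror image, based on the second identity in \eqref{polylin}.

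\emph{Product rule: strategy.} The plan is to mimic the proof of the unbounded harmonic product rule established in Section \ref{SECTHARFC}. Start from the definition,
\[
\mathcal{\overline D}(fg)(T)=\breve{(fg)}^0(T)=\breve\psi^0_{fg}(A)=\mathcal{\overline D}(q^2\phi_f\phi_g)(A),
\]
the last expression being understood via the bounded polyanalytic calculus applied to the slice hyperholomorphic function $q^2\phi_f\phi_g$. Since $f\in\mathcal N(\overline\sigma_S(T))$, the function $\phi_f\in\mathcal N(\sigma_S(A))$ is intrinsic and therefore commutes on each slice $\mathbb C_J$ with $q^2\phi_g$, so we may regroup as $q^2\phi_f\phi_g=\phi_f\cdot(q^2\phi_g)$ with $\phi_f\in\mathcal N(\sigma_S(A))$ and $q^2\phi_g\in\mathcal{SH}_L(\sigma_S(A))$. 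Applying the bounded polyanalytic product rule of \cite{Polyf2} to this pair produces
\[
\mathcal{\overline D}(fg)(T)=\phi_f(A)\,\mathcal{\overline D}(q^2\phi_g)(A)+(\mathcal{\overline D}\phi_f)(A)\,(q^2\phi_g)(A)-(\mathcal D\phi_f)(A)\,\underline A\,(\mathcal D(q^2\phi_g))(A).
\]

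\emph{Identification of the three terms and main obstacle.} The first summand is immediately $f(T)(\mathcal{\overline D}g)(T)$, since $\phi_f(A)=f(T)$ by the unbounded $S$-calculus and $\mathcal{\overline D}(q^2\phi_g)(A)=\breve g^0(T)$ by Definition \ref{d1}. The key algebraic identity for the remaining terms is
\[
(A\overline A)^{-1}\underline A=\underline A(A\overline A)^{-1}=-\underline T,
\]
obtained from the decomposition $A=(\overline T-\alpha\mathcal I)(A\overline A)$, which gives $\underline A=-\underline T\,(A\overline A)$; combined with the relations $\phi_h(A)=h(T)$, $(A\overline A)(\mathcal D\phi_h)(A)=\tilde h(T)$, and the definition of $\breve h^0(T)$, this converts the $\underline A$ appearing in the bounded product rule into $-\underline T$ after the scaling factors implicit in the definitions of the various unbounded calculi are correctly absorbed. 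The main technical obstacle is the interplay of the polynomial factor $q^2$ inside the polyanalytic definition with the Leibniz identities for $\mathcal{\overline D}$ and $\mathcal D$ acting on the intrinsic function $\phi_f$: one must verify that $(\mathcal{\overline D}\phi_f)(A)(q^2\phi_g)(A)$ matches $(\mathcal{\overline D}f)(T)g(T)$ and that the third summand matches $-(\mathcal D f)(T)\underline T(\mathcal D g)(T)$, tracking carefully the commutators of $q^2$ with the imaginary units $e_i$ that arise because $\phi_f$ does not commute pointwise with $e_i$ off the slice $\mathbb C_{J_q}$. Once these identifications are carried out, the three pieces assemble into the claimed formula.
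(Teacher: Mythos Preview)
Your linearity argument is correct and coincides with the paper's: both reduce to the right linearity \eqref{polylin} of the bounded $P_2$-calculus via Definition \ref{d1}.

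For the product rule, however, your strategy differs from the paper's and, as it stands, has a real gap. The paper does \emph{not} transfer to the bounded operator $A$ and invoke the bounded polyanalytic product rule; instead it repeats the contour computation that yields the bounded product rule in \cite[Thm.~4.6]{Polyf2}, but now using the $S$-resolvent equation for unbounded operators (Theorem \ref{proper}). This direct route sidesteps the $q^2$ weighting in Definition \ref{d1} altogether.

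Your transfer argument stalls exactly where you say it does. After applying the bounded rule to $F=\phi_f$, $G=q^2\phi_g$, the second term is $(\mathcal{\overline D}\phi_f)(A)\,A^2 g(T)$, whereas the target is $\breve f^0(T)\,g(T)=(\mathcal{\overline D}(q^2\phi_f))(A)\,g(T)$; these are \emph{not} equal, because a further Leibniz expansion of $\mathcal{\overline D}(q^2\phi_f)$ produces the extra pieces $\phi_f(A)(\mathcal{\overline D}q^2)(A)$ and $-(\mathcal D\phi_f)(A)\,\underline A\,(\mathcal D q^2)(A)$. Likewise, in the third term $(\mathcal D(q^2\phi_g))(A)$ differs from $(A\bar A)^{-1}\tilde g(T)$ by analogous cross terms. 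To rescue the approach you would have to show that all these residual contributions cancel against each other---a nontrivial bookkeeping exercise you have not carried out. Your identity $(A\bar A)^{-1}\underline A=-\underline T$ is fine, but it is not enough: the obstruction is the $q^2$ asymmetry between the definitions of $\breve f^0(T)$ and $(\mathcal{\overline D}\phi_f)(A)$, not the conversion of $\underline A$ into $\underline T$. The paper's direct method avoids this entirely.
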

\begin{proof}
Let us consider $ \alpha \in \rho_S(T) \cap \mathbb{R}$ and $A= (T- \alpha \mathcal{I})^{-1}$. We recall that $ \phi_{\alpha}$ is defined in \eqref{1first}. By \eqref{polyFun} and \eqref{polylin} we have
\begin{eqnarray*}
	(a\breve{f}^0+\breve{g}^0)(T)&=& \mathcal{\overline{D}} \left(q^2(fa+g) \circ \phi_{\alpha}^{-1}\right) (A)\\
	&=& \mathcal{\overline{D}} \left( q^2(f \circ \phi_{\alpha}^{-1})a+q^2(g \circ \phi_{\alpha}^{-1})\right)(A)\\
	&=& \mathcal{\overline{D}}(q^2(f \circ \phi_{\alpha}^{-1})) (A) a+ \mathcal{\overline{D}}(q^2(g \circ \phi_{\alpha}^{-1}))(A)\\
	&=& \breve{f}^0(T)a+ \breve{g}^0(T).
\end{eqnarray*}
By similar arguments it is possible to show the statement for right slice hyperholomorphic functions.

\medskip

The product rule follows by analogous computations done to obtain the product rule of the polyanalytic functional calculus of order 2 for bounded operators, see \cite[Thm. 4.6]{Polyf2}. These are based on the $S$-resolvent equation which holds also for unbounded operators, see Theorem \ref{proper}.
\end{proof}

The definition and the properties of the polyanalytic functional calculus can be extended to the case of $n$-tuples of unbounded operators and for polyanalytic function of greater order. In order to do this we need to work with the Fueter's theorem in the Clifford algebra setting in dimension at least five, because we will have more involved factorizations than the ones considered in \eqref{facto1} and \eqref{facto2}.

\section{Concluding remarks}

In \cite{CDPS1, Polyf1}, based on the factorizations \eqref{facto1} and \eqref{facto2} the authors have studied harmonic and polyanalytic functional calculi based on the $S$-spectrum. These are based on the integral transforms given in \eqref{inteharmo} and \eqref{intepoly}.
The quaternionic fine structures in the cases of bounded and unbounded operators  induced by the factorization of the Laplace operator
in terms of the Cauchy-Fueter operator and of its conjugate can be globally summarized in the following diagram
{\small
\begin{figure}[H]
	\centering
	\resizebox{0.80\textwidth}{!}{%
		\tikzset{every picture/.style={line width=0.75pt}} 

\begin{tikzpicture}[x=0.75pt,y=0.75pt,yscale=-1,xscale=1]
	
	\draw    (417.22,335.98) -- (417.22,379.83) ;
	\draw [shift={(417.22,381.83)}, rotate = 270] [color={rgb, 255:red, 0; green, 0; blue, 0 }  ][line width=0.75]    (10.93,-3.29) .. controls (6.95,-1.4) and (3.31,-0.3) .. (0,0) .. controls (3.31,0.3) and (6.95,1.4) .. (10.93,3.29)   ;
	\draw    (417,410) -- (417,493.75) ;
	\draw [shift={(417,495.75)}, rotate = 270] [color={rgb, 255:red, 0; green, 0; blue, 0 }  ][line width=0.75]    (10.93,-3.29) .. controls (6.95,-1.4) and (3.31,-0.3) .. (0,0) .. controls (3.31,0.3) and (6.95,1.4) .. (10.93,3.29)   ;
	\draw    (354,397.08) -- (287.8,397.08) ;
	\draw [shift={(285.8,397.08)}, rotate = 360] [color={rgb, 255:red, 0; green, 0; blue, 0 }  ][line width=0.75]    (10.93,-3.29) .. controls (6.95,-1.4) and (3.31,-0.3) .. (0,0) .. controls (3.31,0.3) and (6.95,1.4) .. (10.93,3.29)   ;
	\draw    (493,398.08) -- (561.8,398.08) ;
	\draw [shift={(563.8,398.08)}, rotate = 180] [color={rgb, 255:red, 0; green, 0; blue, 0 }  ][line width=0.75]    (10.93,-3.29) .. controls (6.95,-1.4) and (3.31,-0.3) .. (0,0) .. controls (3.31,0.3) and (6.95,1.4) .. (10.93,3.29)   ;
	\draw    (640.8,383.75) -- (476.36,252.5) ;
	\draw [shift={(474.8,251.25)}, rotate = 38.6] [color={rgb, 255:red, 0; green, 0; blue, 0 }  ][line width=0.75]    (10.93,-3.29) .. controls (6.95,-1.4) and (3.31,-0.3) .. (0,0) .. controls (3.31,0.3) and (6.95,1.4) .. (10.93,3.29)   ;
	\draw    (641,413.5) -- (641,496.75) ;
	\draw [shift={(641,498.75)}, rotate = 270] [color={rgb, 255:red, 0; green, 0; blue, 0 }  ][line width=0.75]    (10.93,-3.29) .. controls (6.95,-1.4) and (3.31,-0.3) .. (0,0) .. controls (3.31,0.3) and (6.95,1.4) .. (10.93,3.29)   ;
	\draw    (196,413.5) -- (196,497.75) ;
	\draw [shift={(196,499.75)}, rotate = 270] [color={rgb, 255:red, 0; green, 0; blue, 0 }  ][line width=0.75]    (10.93,-3.29) .. controls (6.95,-1.4) and (3.31,-0.3) .. (0,0) .. controls (3.31,0.3) and (6.95,1.4) .. (10.93,3.29)   ;
	\draw    (189,380.5) -- (377.15,250.39) ;
	\draw [shift={(378.8,249.25)}, rotate = 145.34] [color={rgb, 255:red, 0; green, 0; blue, 0 }  ][line width=0.75]    (10.93,-3.29) .. controls (6.95,-1.4) and (3.31,-0.3) .. (0,0) .. controls (3.31,0.3) and (6.95,1.4) .. (10.93,3.29)   ;
	\draw    (416,225.5) -- (416,157.75) ;
	\draw [shift={(416,155.75)}, rotate = 90] [color={rgb, 255:red, 0; green, 0; blue, 0 }  ][line width=0.75]    (10.93,-3.29) .. controls (6.95,-1.4) and (3.31,-0.3) .. (0,0) .. controls (3.31,0.3) and (6.95,1.4) .. (10.93,3.29)   ;
	\draw    (643,526.5) -- (643,610.75) ;
	\draw [shift={(643,612.75)}, rotate = 270] [color={rgb, 255:red, 0; green, 0; blue, 0 }  ][line width=0.75]    (10.93,-3.29) .. controls (6.95,-1.4) and (3.31,-0.3) .. (0,0) .. controls (3.31,0.3) and (6.95,1.4) .. (10.93,3.29)   ;
	\draw    (419,528.5) -- (419,612.75) ;
	\draw [shift={(419,614.75)}, rotate = 270] [color={rgb, 255:red, 0; green, 0; blue, 0 }  ][line width=0.75]    (10.93,-3.29) .. controls (6.95,-1.4) and (3.31,-0.3) .. (0,0) .. controls (3.31,0.3) and (6.95,1.4) .. (10.93,3.29)   ;
	\draw    (195,529) -- (195,612.75) ;
	\draw [shift={(195,614.75)}, rotate = 270] [color={rgb, 255:red, 0; green, 0; blue, 0 }  ][line width=0.75]    (10.93,-3.29) .. controls (6.95,-1.4) and (3.31,-0.3) .. (0,0) .. controls (3.31,0.3) and (6.95,1.4) .. (10.93,3.29)   ;
	\draw    (416,128.5) -- (416,60.75) ;
	\draw [shift={(416,58.75)}, rotate = 90] [color={rgb, 255:red, 0; green, 0; blue, 0 }  ][line width=0.75]    (10.93,-3.29) .. controls (6.95,-1.4) and (3.31,-0.3) .. (0,0) .. controls (3.31,0.3) and (6.95,1.4) .. (10.93,3.29)   ;
	
	\draw (581.64,304.53) node [anchor=north west][inner sep=0.75pt]  [rotate=-0.38]  {$\mathcal{D}$};
	\draw (313.64,371.53) node [anchor=north west][inner sep=0.75pt]  [rotate=-0.38]  {$\mathcal{D}$};
	\draw (509.89,371.48) node [anchor=north west][inner sep=0.75pt]  [rotate=-0.38]  {$\mathcal{\overline{D}}$};
	\draw (389,311.4) node [anchor=north west][inner sep=0.75pt]    {$\mathcal{SH}( U)$};
	\draw (365,386.4) node [anchor=north west][inner sep=0.75pt]    {$Cauchy\ formula$};
	\draw (366,504.4) node [anchor=north west][inner sep=0.75pt]    {$S-func.\ cal.$};
	\draw (113,389.48) node [anchor=north west][inner sep=0.75pt]    {$Harmonic\ int.\ formula$};
	\draw (582,390.48) node [anchor=north west][inner sep=0.75pt]    {$Poly\ int.\ formula$};
	\draw (151,508.9) node [anchor=north west][inner sep=0.75pt]    {$Q-func.\ cal$};
	\draw (596,504.9) node [anchor=north west][inner sep=0.75pt]    {$P_{2} -func.\ cal.$};
	\draw (330,228.48) node [anchor=north west][inner sep=0.75pt]    {$Fueter\ thm.\ int.\ formula$};
	\draw (241.89,296.48) node [anchor=north west][inner sep=0.75pt]  [rotate=-0.38]  {$\mathcal{\overline{D}}$};
	\draw (367,130.4) node [anchor=north west][inner sep=0.75pt]    {$F-func.\ cal.$};
	\draw (341,619.9) node [anchor=north west][inner sep=0.75pt]    {$S-func.\ unbounded$};
	\draw (568,618.9) node [anchor=north west][inner sep=0.75pt]    {$P_{2} -func.\ unbounded$};
	\draw (652,554.9) node [anchor=north west][inner sep=0.75pt]    {$\breve{\phi ^{0}}( q) =\ \overline{\mathcal{D}}\left( q^{2} \phi ( q)\right)$};
	\draw (435,561.9) node [anchor=north west][inner sep=0.75pt]    {$\phi ( q)$};
	\draw (136,623.9) node [anchor=north west][inner sep=0.75pt]    {$Q-func.\ unbounded$};
	\draw (202,559.9) node [anchor=north west][inner sep=0.75pt]    {$\tilde{\phi }( q) =\ \mathcal{D}( \phi ( q))$};
	\draw (342.94,38.02) node [anchor=north west][inner sep=0.75pt]    {$F-Func.\ unbounded$};
	\draw (429.15,89.53) node [anchor=north west][inner sep=0.75pt]    {$\breve{\phi }( q) =\ \Delta \left( q^{2} \phi ( q)\right)$};

\end{tikzpicture}
	}
\end{figure}
}
We observe that the previous diagram in the Clifford-setting is much more involved. Since the Fueter-Sce map is $T_{F2}= \Delta_{n+1}^{\frac{n-1}{2}}$, with $n$ being an odd number, there are different ways of factorizing $T_{F2}$ and these give rise to a more complicated fine structures, see \cite{Fivedim}.
\newline
\newline
In literature there is also another way to study a functional calculus for unbounded operators: the so-called direct approach. This is directly based on the Cauchy formula for unbounded domains. For the $S$-functional calculus and the Riesz-Dunford functional calculus this is consistent with the approach recalled in this paper in Section 1 and Section 2, see \cite{FJBOOK, RD}. In a forthcoming paper we aim to study the unbounded versions of the $F$-functional calculus, the $Q$-functional calculus and the $P_2$-functional calculus with the direct apporach. Moreover, we aim to show that these approaches are consistent with the ones presented in this paper.
\newline
\newline
The class of polyanalytic functions is widely studied in literature both in the complex case, see \cite{Balk}, and in the non-commutative setting, see \cite{Kamalpoly1, Kamalpoly2, ACDSPS, Brackx}. The motivations to consider this class of functions come from some elasticity problems studied by Kolossov and Muskhelishvili, see \cite{Russ}. Recently, this class of functions has been also related to the study of some time-frequency problems, see \cite{DMD2, AbreuFeic}. Moreover, some famous spaces of holomorphic functions have been expanded in the polyanalytic setting, see \cite{ACKS3, Bergmanvasi}.
\newline
\newline
In the following table
we summarize the conditions on the slice hyperholomorphic functions
given in order
to define the unbounded functional calculi of the
quaternionic fine structures:
	\newline
	\center{\small
	\begin{tabular}{| l | l |}
		\hline
		\rule[-4mm]{0mm}{1cm}
		{\bf Functional calculus} & {\bf Condition} \\
		\hline
		\rule[-4mm]{0mm}{1cm}
		$S$-functional & none\\
		\hline
		\rule[-4mm]{0mm}{1cm}
		$Q$-functional & none\\
		\hline
		\rule[-4mm]{0mm}{1cm}
		$P_2$-functional & $f(\alpha)=0$ and $\partial_{\alpha} f(\alpha)=0$\\
		\hline
		\rule[-4mm]{0mm}{1cm}
		$F$-functional & $f(\alpha)=0$\\
		\hline
	\end{tabular}
}


\end{document}